\providecommand{\U}[1]{\protect\rule{.1in}{.1in}}
\newtheorem{theorem}{Theorem}
\newtheorem{corollary}[theorem]{Corollary}
\newtheorem{lemma}[theorem]{Lemma}
\newtheorem{proposition}[theorem]{Proposition}
\newenvironment{proof}[1][Proof]{\noindent\textbf{#1.} }{\ \rule{0.5em}{0.5em}}
\begin{document}

\title{Hilbert and Fr\'{e}chet bundle versions of the Harish-Chandra and Whittaker
Plancherel Theorems}
\author{Nolan R. Wallach}
\maketitle

\begin{abstract}
This paper, in particular, gives a complete proof of the direct integral
version of the Whittaker Plancherel Theorem. The main emphasis is on certain
Hilbert and Fr\'{e}chet vector bundles over a space that has a submersion onto
the tempered dual. This allows for an approach to the Plancherel Theorems
(both for $L^{2}$ and the Whittaker case) that is representation theoretic and
bypasses the need for Harish-Chandra's Eisenstein Integrals and yields a proof
the direct integral decompositions without invoking the abstract theory.

\end{abstract}

\section{Introduction}

In \cite{WPT}, I proved an inversion formula for what Harish-Chandra called
the Whittaker Integral, but is more justifiably called the Jacquet Integral,
on a real reductive group as a consequence of the Harish-Chandra inversion
formula for what he called the Eisenstein Integral. In Chapter 14 my book
\cite{RRGII} I showed how Harish-Chandra's Theorem calculates the Plancherel
Measure. In Chapter 15 of \cite{RRGII} I stated a direct integral form of the
Whittaker Plancherel Theorem which was based on several results whose proofs
had serious gaps and whose statements are not quite correct (see the remark
below about $\chi$ and $\chi^{-1}$). In \cite{WPT} the proofs and statements
were fixed. In this paper it was shown, in particular, that the direct
integral formula, as stated, in \cite{RRGII} is correct. However, the main
thrust of this paper is to set up a formalism in the Harish-Chandra case that
is quite explicit and makes the necessary constructs more natural using
Hilbert (and Fr\'{e}chet) vector bundles. This naturality allows us to move to
the Whittaker case relatively smoothly. In Section \ref{Direct-integral} the
direct integral version of the Harish-Chandra and the Whittaker Plancherel
Theorems are shown to easily follow from the vector bundle version. The
methods used in this paper may seem a bit pedantic but they make certain
aspects of the intertwining operators that involve choosing elements in a
coset defining an element of a Weyl group more natural. Almost half of this
paper is devoted to establishing this naturality. In the usual presentation of
the intertwining operators between representations induced from the same
parabolic subgroup there is a choice of a representative in the coset that
defines the corresponding element of the Weyl group. In this paper the
intertwining operator is given in terms of any element of the corresponding
coset (see the discussion following Lemma \ref{Totality}).

Althouhgh this paper rests on the inversion formulae for the Harish-Chandra
and Whittaker transforms. The inversion formula does not immediately imply the
direct integral formula which asserts an isomorphism of unitary representations.

One mysterious aspect of \cite{WPT} (at least to me) was the appearance of
$\chi^{-1}$ rather than $\chi$ in the distribution version of the Whittaker
Plancherel formula. In section \ref{Direct-integral} this "mystery" is solved,
and has to do with that fact that multiplicity space of a tempered unitary
representation in the abstract Plancherel theorem is its conjugate representation.

\section{Notation}

This section is meant to set our notation. One can find more details of
assertions, given here without reference, in \cite{RRGI} chapter 2.

If ${}^{W}X_{Y}^{Z}$ is the designation of a real Lie Group then {}$^{W}%
$fraktur(lower-case(X))$_{Y}^{Z}$ will denote its Lie algebra. If any of the
$Y,Z,W$ are missing then leave them off of the corresponding Lie algebra. Thus
$Lie(G)=\mathfrak{g}$, $Lie\left(  N_{P}\right)  =\mathfrak{n}_{P}$,
$Lie({}^{o}M_{P})={}^{o}\mathfrak{m}_{P}$.

Let $G$ be a real reductive group with maximal compact subgroup, $K$ and let,
$\theta$, denote the Cartan involution of $G$ corresponding to $K$,that is,
the involutive automorphism of $G$ such that its fixed point set is $K$. Let
$B$ denote an $Ad(G)$--invariant symmetric bilinear form on $\mathfrak{g} $
such that
\[
\left\langle x,y\right\rangle =-B(\theta x,y)
\]
defines an inner product on $G$. Let $G=KA_{o}N_{o}$ be an Iwasawa
decomposition of $G$ which, as usual, means that $A_{o}$ is connected subgroup
of $G$ such that if $a\in A_{o}$ then $\theta(a)=a^{-1}$ and $N_{o} $ is a
connected simply connected nilpotent Lie subgroup of $G$ such that $Ad(N_{o})$
consists of unipotent elements, $\ker Ad_{|N_{o}}$ is trivial and
$aN_{o}a^{-1}\subset N_{o}$ for all $a\in A_{o}$ and the map%
\[
K\times A_{o}\times N_{o}\rightarrow G
\]
given by multiplication is a diffeomorphism. Throughout this paper, $K,A_{o}$
and $N_{o}$ will be fixed.

If $S\subset G$ and $H$ is a subgroup of $G$ then we set $N_{H}(S)=\{h\in
H|hSh^{-1}\subset S\}$ and $C_{H}(S)=\{h\in H|hsh^{-1}=s,s\in S\}$. Set
$M_{o}=C_{K}(A_{o})$ and $P_{o}=M_{o}A_{o}N_{o}$. We define a standard
parabolic subgroup of $G$ to mean a subgroup, $P$, that is its own normalizer
and such that $P\supset P_{o}$. Then, clearly $P_{o}$ is the minimal standard
parabolic subgroup of $G$. A parabolic subgroup of $G$ is a subgroup conjugate
under $G$ (hence conjugate under $K$) to a standard parabolic subgroup. This
is equivalent to the usual definition of parabolic subgroup (as subgroup that
is its own normalizer and whose complexified Lie algebra contains a Borel
subalgebra), since all Iwasawa decompositions are conjugate under $K$ and all
maximal compact subgroups are conjugate under $G$.

If $H$ is a real reductive group with maximal compact subgroup $U$ and a
corresponding Cartan involution, $\eta$, let $X(H)$ denote the set of
continuous homomorphisms of $H$ to the positive real numbers (under
multiplication). Let
\[
{}^{o}H=\cap_{\eta\in X(H)}\ker\eta.
\]
If $C$ is the center of $H$ then the identity component, $C_{o}$, contains a
unique maximal compact torus which is the identity component of{}$^{o}C=C\cap
U$. Set $S_{\eta,H}=\{x\in C_{o}|\eta(x)=x^{-1}\}$ then $S_{\eta,H}$ is a
subgroup of $C_{o}$ that is isomorphic with $\mathbb{R}^{m} $ for some $m$.

If $P$ is a parabolic subgroup of $G$ then set $M_{P}=\theta(P)\cap P$.
$M_{P}$ is a real reductive group and we define $A_{P}=S_{\theta_{|M_{P}%
},M_{P}}$. Then $M_{P}$ is a Levi factor of $P$. Any Levi factor of $P$ is
conjugate to $M_{P}$ in $P$. Let $N_{P}$ be the unipotent radical of $Ad(P)$.
Then the map%
\[
N_{P}\times A_{P}\times{}^{o}M_{P}\rightarrow P
\]
given by multiplication is a diffeomorphism, known as the Langlands
decomposition of $P$.

If $P$ is a parabolic subgroup with Langlands decomposition $P=N_{P}A_{P}%
{}^{o}M_{P}$ then $(P,A_{P})$ is called a parabolic pair or a $P$--pair. If
$Q=kPk^{-1}$ with $P$ a standard parabolic subgroup then, clearly,
$\ ^{o}M_{Q}=k{}^{o}M_{P}k^{-1}.$ The unipotent radical of $Ad(Q)$ is
$Ad(k^{-1}N_{P}k)$ thus we set $N_{Q}=k^{-1}N_{P}k$ and if $A_{Q}=kA_{P}%
k^{-1}$ then $M_{Q}=A_{Q}\ ^{o}M_{Q}$. We define, for $(P,A_{P})$ a standard
$P$--pair, $\bar{P}=\theta(P)$. Then $\bar{P}$ is a parabolic subgroup of $G$
but not necessarily equal to $kPk^{-1}$ for some $k\in K$. In any event, using
our definitions, we have $M_{\overline{P}}=M_{P},A_{\overline{P}}=A_{P}$ and
$N_{\overline{P}}=\theta N_{P}$. For simplicity we will label $\theta
N_{P}=\bar{N}_{P\text{ }}$ and its Lie algebra $\mathfrak{\bar{n}}_{P}$.

Parabolic subgroups, $P$ and $Q,$ are said to be associate if $M_{P}$ and
$M_{Q}$ are conjugate in $G$ (if $P$ and $Q$ are standard parabolic subgroups
then $P$ and $Q$ are associate if and only if $A_{P},A_{Q}\subset A_{o}$ are
conjugate in the normalizer of $A_{o}$ in $K$). Let $\mathcal{P}(G)$ be the
set of associativity classes of parabolic subgroups of $G$ every associativity
class contains a standard parabolic subgroup. If $A\subset A_{o}$ is a
subgroup then we will call it a standard split component if there exists a
standard parabolic subgroup, $P$, such that $A=A_{P}$.

A parabolic subgroup of $G$, $P$, is said to be cuspidal if ${}^{o}M_{P}$ has
a compact Cartan subgroup. If $P$ is cuspidal and $T$ is a compact Cartan
subgroup of ${}^{o}M_{P}$ then $H=TA_{P}$ is a Cartan subgroup of $G$ which we
will call associated with $P$. Any pair of Cartan subgroups associated with
$P$ are conjugate by an element of $M_{P}$. The following result is well known
we give a proof since it is hard to find explicitly stated (for example it is
implicit in \cite{HCI}).

\begin{proposition}
The following are equivalent for cuspidal parabolic subgroups $P$ and $Q$.

1. $P$ and $Q$ are associate.

2. The Lie algebra of a compact Cartan subgroup of $M_{P}$ is conjugate to a
Lie algebra of compact subgroup of $M_{Q}$

3. $A_{P}$ and $A_{Q}$ are conjugate relative to $G$.

4. The Cartan subgroups associated with $P$ are all conjugate relative to $G$
to the ones associated with $Q$.

Furthermore, if $H$ is a Cartan subgroup of $G$ then $H$ is associated with a
Cuspidal parabolic subgroup.
\end{proposition}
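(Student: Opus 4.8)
The plan is to establish the chain of equivalences $1 \Leftrightarrow 3 \Leftrightarrow 4$ together with $2 \Leftrightarrow 3$, exploiting the structural facts already recorded in the Notation section. The key observation is that for a cuspidal parabolic $P$ with associated Cartan subgroup $H = TA_P$, the group $A_P$ is precisely the split part (the $S_{\theta,H}$-type subgroup) of $H$, while $T$ is the maximal compact torus; thus $H$, $A_P$, and the compact Cartan of ${}^oM_P$ all carry equivalent conjugacy information. First I would prove $1 \Leftrightarrow 3$: by the parenthetical remark in the definition of associate, for standard $P, Q$ associativity is equivalent to $A_P, A_Q$ being conjugate in $N_K(A_o)$, and since every associativity class contains a standard parabolic, one reduces the general case to the standard one by the $K$-conjugacy of Iwasawa decompositions. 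The direction making $A_P \sim_G A_Q$ give associativity requires showing that a $G$-conjugacy between split components can be adjusted to lie in $K$ (normalizing $A_o$), which follows from conjugating everything back into the fixed Iwasawa data.

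Next I would handle $3 \Leftrightarrow 4$. Given a $G$-conjugacy $gA_Pg^{-1} = A_Q$, I would show it extends to the associated Cartan subgroups. Writing $H_P = TA_P$ and $H_Q = T'A_Q$, the split components are intrinsic ($A_P = S_{\theta_{|M_P},M_P}$ is the connected split part of the center), so conjugating $A_P$ to $A_Q$ conjugates $M_P$ to (a group containing) $M_Q$, and then I would invoke that any two Cartan subgroups associated with a fixed cuspidal parabolic are conjugate by an element of $M_P$ (stated just before the proposition) to align the compact parts $T$ and $T'$. Conversely, a $G$-conjugacy of the full Cartan subgroups restricts to one of their split parts, giving $4 \Rightarrow 3$. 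The equivalence $2 \Leftrightarrow 3$ is then the "compact-part" mirror of this: the Lie algebra of a compact Cartan of $M_P$ is $\mathfrak{t}$, and conjugacy of the $\mathfrak{t}$'s together with the complementary split parts assembles into conjugacy of the full $H$'s, hence of the $A_P$'s, using that a compact Cartan of $M_P$ determines $A_P$ as the split part of its $G$-centralizer's center.

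For the final assertion, that every Cartan subgroup $H$ of $G$ is associated with a cuspidal parabolic, the plan is constructive. I would decompose $H = H^- H^+$ into its split part $H^- = \{h : \theta(h) = h^{-1}\}^\circ$ and compact part; after conjugating by $K$ I may assume $H^- \subset A_o$, so $H^-$ is a standard split component $A_P$ for a unique standard parabolic $P$. Then $H^+ = T$ lies in ${}^oM_P$ and is a Cartan subgroup there; since $H$ is a Cartan subgroup of $G$, the centralizer structure forces $T$ to be a compact Cartan subgroup of ${}^oM_P$, which makes $P$ cuspidal and exhibits $H = TA_P$ as associated with $P$.

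The main obstacle I anticipate is the bookkeeping in reducing $G$-conjugacy to $K$-conjugacy normalizing the fixed Iwasawa data, since the definitions privilege standard parabolics and standard split components while the conjugacies in statements $3$ and $4$ are a priori only relative to all of $G$. The crux is that any $G$-conjugate of $A_o$ that is again a split torus of a parabolic can be moved back into $A_o$ by an element of $K$ (using uniqueness of Iwasawa decompositions up to $K$-conjugacy), and then the normalizer $N_K(A_o)$ controls the remaining ambiguity; getting this reduction clean, and verifying that the compact Cartan $T$ genuinely lands as a \emph{compact Cartan} of ${}^oM_P$ rather than merely a torus, is where the real care is needed.
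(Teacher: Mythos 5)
There is a genuine gap, and it sits at the one implication that carries all the difficulty: getting from conjugacy of compact data (statement 2) to conjugacy of the split components (statement 3). Your proposed mechanism is that ``a compact Cartan of $M_P$ determines $A_P$ as the split part of its $G$-centralizer's center.'' This is false. Take $G=SL(2,\mathbb{R})$ (or any split group) and $P=P_o$: then ${}^oM_{P_o}=M_o$ is finite, so the compact Cartan subgroup of ${}^oM_{P_o}$ has Lie algebra $\mathfrak{t}=0$, whence $C_{\mathfrak{g}}(\mathfrak{t})=\mathfrak{g}$, whose center has trivial split part, while $\mathfrak{a}_{P_o}=\mathfrak{a}_o\neq 0$. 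In general $C_{\mathfrak{g}}(\mathfrak{t})$ is a reductive algebra on which all roots are real (a split, i.e.\ Chevalley-type, algebra modulo the central $\mathfrak{t}$), and $\mathfrak{a}_P$ sits inside it only as a \emph{split Cartan subalgebra}, not as anything central; it is therefore determined by $\mathfrak{t}$ only up to conjugacy, never pointwise. This is precisely why the paper's proof of $2\Rightarrow 3$ must invoke the theorem that split Cartan subalgebras of a split reductive Lie algebra over $\mathbb{R}$ are conjugate: one transports $\mathfrak{a}_P$ by $\mathrm{Ad}(g)$ into $C_{\mathfrak{g}}(\mathfrak{t}_1)=\mathrm{Ad}(g)C_{\mathfrak{g}}(\mathfrak{t})$, observes that it and $\mathfrak{a}_Q$ are two split Cartan subalgebras of the same split algebra, and applies that conjugacy theorem. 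Your proposal never invokes this input, and without it the equivalence of 2 with the other statements does not close.

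A second, related soft spot is your $4\Rightarrow 3$: you assert that a $G$-conjugacy of Cartan subgroups ``restricts to one of their split parts'' because $A_P=S_{\theta_{|M_P},M_P}$ is intrinsic. But that definition depends on $\theta$, and the conjugating element $g$ need not commute with $\theta$; as a subgroup of $H=TA_P$, a vector complement to $T$ is not unique (the graph of any nontrivial homomorphism $A_P\rightarrow T$ is another one). Only the maximal compact part $T$ is intrinsic, which is exactly why the paper proves $4\Rightarrow 2$ rather than $4\Rightarrow 3$ and then routes everything through the hard implication $2\Rightarrow 3$. (Your step could be repaired---$\mathrm{Ad}(g)$ preserves $B$ and carries $\mathfrak{t}$ to $\mathfrak{t}'$, so it carries $\mathfrak{a}_P=\mathfrak{t}^{\perp_B}\cap\mathfrak{h}_P$ to $\mathfrak{a}_Q$---but as written it is a gap, and fixing it still leaves the missing $2\Rightarrow 3$.) Finally, in the last assertion, before decomposing $H$ into compact and split parts you must first replace $H$ by a $\theta$-stable conjugate, which requires conjugation by a general element of $G$ (the paper cites \cite{RRGI} Lemma 2.3.2 for this); conjugation by $K$ alone cannot achieve $\theta$-stability.
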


\begin{proof}
We prove 1. implies 2. implies 3. implies 1. and 3. implies 4 implies 2.

1. implies that there exists $g\in G$ such that $g{}^{o}M_{P}g^{-1}={}%
^{o}M_{Q}$. Since all compact Cartan subgroups in ${}^{o}M_{Q}$ are conjugate
in ${}^{o}M_{Q}$ it is now clear that 1. implies 2. Note that its is also
clear that 3. implies 1 since $M_{P}=C_{G}(A_{P})$.

Let $\mathfrak{t}$ be the Lie algebra if a compact Cartan subgroup, $T$, of
${}^{o}M_{P}$ and let $g$ be such that $gTg^{-1}$ is a compact Cartan subgroup
of ${}^{o}M_{Q}$ set $Ad(g)\mathfrak{t=t}_{1}$. Let $\mathfrak{l}%
=C_{\mathfrak{g}}(\mathfrak{t})$. Then $\mathfrak{a}_{P}$ is a Cartan
subalgebra of $\mathfrak{l}$. Indeed of $\mathfrak{b}$ is a Cartan subalgebra
of $\mathfrak{l}$ containing $\mathfrak{a}_{P}$ then $\mathfrak{t}%
\oplus\mathfrak{b}$ is an abelian subalgebra of $\mathfrak{g}$ consisting of
semi-simple elements. Clearly $\mathfrak{b}\subset\mathfrak{m}_{P}$ so
$\mathfrak{b}=\mathfrak{a}_{P}\oplus\mathfrak{w}$ with $\mathfrak{w}\subset
{}^{o}\mathfrak{m}_{P}$. Since $\mathfrak{t}$ is a Cartan subalgebra of
${}^{o}\mathfrak{m}_{P}$ this implies that $\mathfrak{w=}0$. Arguing in the
same way we see that $\mathfrak{a}_{Q}$ is a Cartan subalgebra of
$Ad(g)\mathfrak{l}$. Since all Cartan subalgebras that are split over
$\mathbb{R}$ of Chevalley group over $\mathbb{R}$ are conjugate, this proves
that 2. implies 3. So at this point we have shown that 1.,2.,3. are equivalent.

Since $M_{P}=C_{G}(A_{P})$ and $M_{Q}=C_{G}(A_{Q})$, and all compact subgroups
of ${}^{o}M_{P}$ (resp. ${}^{o}M_{Q}$) are conjugate relative to ${}^{o}M_{P}$
(resp. ${}^{o}M_{Q}$) we see that 3. implies 4.

If $H$ is an abelian real reductive group then ${}^{o}H$ is the unique maximal
compact subgroup of $H$ thus 4. implies 2.

The last assertion is an easy consequence of the well known assertion: If
$\mathfrak{h}$ is a Cartan subalgebra of $\mathfrak{g}$ then $\mathfrak{h}$
has a $\theta$--stable $G$ conjugate (c.f. \cite{RRGI} Lemma 2.3.2 with the
typo on line -11 page 56 $dq_{0.Z}$ should say $dq_{0,X}$) .
\end{proof}

If $(P,A)$ is a parabolic pair in $G$ then set $\Phi(P,A)$ equal to the
non-zero weights of $\mathfrak{a}_{P}$ acting on $\mathfrak{n}_{P}$ under the
restriction of the adjoint representation.

Let $(\pi,H)$ is a strongly continuous representation of $G$ on a Fr\'{e}chet
space. Then $H^{\infty}$ will denote the $C^{\infty}$ vectors. $(\pi,H)$ is
said to be smooth if for each $v\in V$ the map $g\mapsto\pi(g)v$ is
$C^{\infty}$. Then $(\pi,H^{\infty})$ is a smooth Fr\'{e}chet representation
(here the Fr\'{e}chet structure is given by adding the semi-norms $q\circ
d\pi(u)$ for $q$ a continuous seminorm on $H$ and $u\in$ $U(\mathfrak{g})$).

Let $(P,A_{P})$ be a parabolic pair and let $(\sigma,H_{\sigma})$ be a
strongly continuous representation of ${}^{o}M_{P}$ on a Hilbert space such
that $\sigma_{|K\cap M_{P}}$ is unitary. If $\nu\in\left(  \mathfrak{a}%
_{P}\right)  _{\mathbb{C}}^{\ast}$ then define the representation $\sigma
_{\nu}$ of $M_{P}$ on $H_{\sigma}$ by%
\[
\sigma_{\nu}(am)=a^{\nu+\rho_{P}}\sigma(m)
\]
with $a\in A_{P,}m\in{}^{o}M_{P}$ and $\rho_{P}(h)=\frac{1}{2}\mathrm{tr}%
(ad(h)_{|\mathfrak{n}_{P}})$ for $h\in\mathfrak{a}_{P}$. Let $H_{[P],\sigma
}^{\infty}$ be the space of $C^{\infty}$ maps, $f$, of $K$ into $H_{\sigma
}^{\infty}$ such that $f(mk)=\sigma(m)f(k)$ for $m\in$ $M_{P}$ $\cap K$ and
$k\in K$. Note that $H_{[P],\sigma}^{\infty}$ depends only on $\sigma
_{|M_{P}\cap K}$ that is the reason for the the designation $[P]$. If $f,h\in
H_{[P],\sigma}^{\infty}$ then the inner product is given by%
\[
\left\langle f,h\right\rangle =\int_{K}\left\langle f(k),h(k)\right\rangle dk
\]
and take $H_{\left[  P\right]  ,\sigma}$ equal to the Hilbert space completion
of $H_{[P],\sigma}^{\infty}$ relative to this inner product. Note that
$H_{[P],\sigma}$ is just the unitarily induced representation from $M_{P}\cap
K$ to $K$ of $\sigma_{|M_{P}\cap K}$. For each $\nu\in\left(  \mathfrak{a}%
_{P}\right)  _{\mathbb{C}}^{\ast}$ we define a strongly continuous
representation of $G$ on $H_{[P],\sigma}$ as follows: If $f\in H_{\left[
P\right]  ,\sigma}$ then for $g=nmk$ with $n\in N_{P},m\in M_{P}$ and $k\in K$
set
\[
{}_{P}f_{\nu}(nmk)=\sigma_{\nu}(m)f(k).
\]
Then
\[
\left(  \pi_{P,\sigma,\nu}(g)f\right)  (k)={}_{P}f_{\nu}(kg).
\]
The $C^{\infty}$ vectors of $(\pi_{P,\sigma,\nu},H_{\left[  P\right]  ,\sigma
})$ are the $f\in H_{\left[  P\right]  ,\sigma}^{\infty}$ with the usual
Fr\'{e}chet topology of uniform convergence with all derivatives. Note that
the space of $C^{\infty}$ vectors for $\pi_{P,\sigma,\nu}$ is independent of
$\nu$.

Let $\hat{K}$ denote the set of equivalence classes of irreducible continuous
representations of $K$. If $\gamma\in\hat{K}$ then fix a representative
$(\tau_{\gamma},V_{\gamma})$ of $\gamma$. If $(\pi,H)$ is a a strongly
continuous representation of $K$ on a Fr\'{e}chet space and if $\gamma\in
\hat{K}$ then set
\[
H[\gamma]=Span_{\mathbb{C}}\{TV_{\gamma}|T\in\mathrm{Hom}_{K}(V_{\gamma},H)\}
\]
Here $Hom_{K}$ means continuous $K$ intertwining operators. Then the space of
$K$--finite vectors
\[
H_{K}=\sum_{\gamma\in\hat{K}}H[\gamma]=\bigoplus_{\gamma\in\hat{K}}H[\gamma]
\]
is a dense subspace, $H[\gamma]$ is called the $\gamma$ isotypic component of
$H$. $(\pi,H)$ is said to be admissible if
\[
\dim H[\gamma]<\infty
\]
for all $\gamma\in\hat{K}$. If $\chi_{\gamma}$ is the character of $\gamma
\in\hat{K}$ and if $d_{\gamma}=\dim V_{\gamma}$ then set%
\[
E_{\gamma}v=d_{\gamma}\int_{K}\chi_{\gamma}(k^{-1})\pi(k)v
\]
for $v\in H$. Then $H[\gamma]=E_{\gamma}H$ and if $(\pi,H)$ is a Hilbert
representation then $E_{\gamma}$ is the orthogonal projection onto $H[\gamma
]$. If $F\subset\hat{K}$ is finite set $E_{F}=\sum_{\gamma\in F}E_{\gamma}$.

Let $d=\dim N_{o}$ and let $v_{o}$ be a unit vector in $\wedge^{d}%
\mathfrak{n}_{o}$ with inner product induced by $\left\langle
...,...\right\rangle $ and let $V_{o}$ be the linear span of $\wedge
^{d}Ad(G)v_{o}$. Set for $g\in G,$ $\left\vert g\right\vert $ equal to the
operator norm of $\wedge^{d}Ad(g)_{|V_{o}}$ and define for $g=(\exp h)g_{o}$
with $h\in\mathfrak{a}_{G}$
\[
\left\Vert g\right\Vert =e^{\sqrt{\left\langle h,h\right\rangle }}\left\vert
g\right\vert ^{\frac{1}{2}}%
\]
Then $\left\Vert ...\right\Vert $ is a norm on $G$ in the sense of \cite{RRGI}
2.A.2 and if $g\in{}^{o}G$ and $g=k_{1}ak_{2}$ with $k_{1},k_{2}\in K $ and
$a\in A_{o}$ such that all of the eigenvalues of $Ad(a)_{|\mathfrak{n}_{o}}$
are greater than or equal to $1$ then
\[
\left\vert g\right\vert ^{\frac{1}{2}}=\left\Vert g\right\Vert =a^{\rho_{o}}.
\]

We now recall the definition of the Harish-Chandra Schwartz Space. Let $f\in
C^{\infty}(G)$. If $x,y\in U(\mathfrak{g})$ (the universal enveloping algebra
of $\mathfrak{g})$ and $k\in\mathbb{R}$ define
\[
p_{x,y,k}(f)=\mathrm{sup}_{G}\left\vert L_{x}R_{y}f(g)\right\vert
(1+\log\left\Vert g\right\Vert )^{k}\left\vert g\right\vert ^{\frac{1}{2}}.
\]
here $L$ and $R$ are respectively the left and right regular actions of $G$ on
$f\in C^{\infty}(G)$. Then $\mathcal{C}(G)$ denotes the space of $f\in
C^{\infty}(G)$ such that $p_{x,y,k}(f)<\infty$ for all $k,x,y$. $\mathcal{C}%
(G)$ is endowed with the topology induced by the seminorms $p_{x,y,k}$ and is
a Fr\'{e}chet Space such that $L$ and $R$ define smooth representations of
$G$. In particular, we may define $E_{\gamma,L}$ and $E_{\gamma,R}$ on
$\mathcal{C}(G)$ for the $E_{\gamma}$ for respectively the right and left
regular representation). Also, in Harish-Chandra's theory a specific function
plays a key role%
\[
\Xi(g)=\left\langle \pi_{P_{o},1,0}(g)1,1\right\rangle
\]
and there exist $C,d>0$ \ such that (c.f. Theorem 4.5.3 \cite{RRGI})%
\[
\overset{}{(\ast)}\left\vert g\right\vert ^{-\frac{1}{2}}\leq\Xi(g)\leq
C\left\vert g\right\vert ^{\frac{1}{2}}(1+\log|g|)^{d}.
\]

So our definition is the same as that of Harish-Chandra.

A finitely generated, admissible, representation,$(\pi,H)$, of $G$ is said to
be tempered if for each $v,w\in H^{\infty}$ the function $c_{v,w}(g)$
satisfies the weak inequality. That is, there exist $k,C\in\mathbb{R}_{>0}$
such that%
\[
\left\vert c_{v,w}(g)\right\vert \leq C(1+\log\left\Vert g\right\Vert
)^{k}\left\vert g\right\vert ^{-\frac{1}{2}}.
\]
This is usually stated with $v,w\in H_{K}$ and $\left\vert g\right\vert
^{-\frac{1}{2}}$ replaced by $\Xi(g)$, but it is true in this generality, see
\cite{RRGII} Theorem 15.2.4).

Let $C_{K}$ denote the Casimir operator on $K$ corresponding to
$B_{|\mathfrak{k}}$. Let denote $\lambda_{\gamma}$ be the eigenvalue of
$C_{K}$ on any representative of $\gamma\in\hat{K}$. Note that since $C_{K}$
is a second order elliptic operator one has%
\[
\sum_{\gamma\in\hat{K}}\frac{d_{\gamma}}{(1+\lambda_{\gamma})^{\frac{\dim
K}{2}+\varepsilon}}<\infty
\]
for all $\varepsilon>0$.

\section{The Plancherel Space of $G\label{G-Plancherel Space}$}

Using the subquotient theorem (c.f. \cite{RRGI} Theorem 3.5.6), induction in
stages, the Weyl dimension formula and the formula for $\lambda_{\gamma}$ in
terms of the highest weight of $\gamma$ we have

\begin{proposition}
Let $(P,A)$ be a parabolic pair and let $(\sigma,H_{\sigma})$ be an
irreducible representation of ${}^{o}M_{P}$ and $\nu\in\mathfrak{a}%
_{\mathbb{C}}^{\ast}$ then
\[
\dim H_{[P],\sigma}[\gamma]\leq d_{\gamma}^{2}\leq C(1+\lambda_{\gamma})^{\dim
K-rank(K)}.
\]

\end{proposition}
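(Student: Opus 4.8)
The statement packages two inequalities of entirely different natures, and the plan is to prove them separately. The first, $\dim H_{[P],\sigma}[\gamma]\le d_{\gamma}^{2}$, is a multiplicity bound, while the second, $d_{\gamma}^{2}\le C(1+\lambda_{\gamma})^{\dim K-\mathrm{rank}(K)}$, is an elementary growth estimate deduced from the Weyl dimension formula and the expression for $\lambda_{\gamma}$.

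For the first inequality I would start from the observation recorded in the definition of $H_{[P],\sigma}$: as a representation of $K$ it is exactly the unitarily induced representation $\mathrm{Ind}_{M_{P}\cap K}^{K}(\sigma_{|M_{P}\cap K})$, and $M_{P}\cap K=K\cap{}^{o}M_{P}$ is the maximal compact subgroup of ${}^{o}M_{P}$. Writing $m_{\gamma}$ for the multiplicity of $\gamma$, so that $\dim H_{[P],\sigma}[\gamma]=d_{\gamma}m_{\gamma}$, it suffices to show $m_{\gamma}\le d_{\gamma}$. Frobenius reciprocity gives $m_{\gamma}=\dim\mathrm{Hom}_{M_{P}\cap K}(V_{\gamma},H_{\sigma})$; decomposing $V_{\gamma}|_{M_{P}\cap K}=\bigoplus_{\delta}c_{\delta}W_{\delta}$ into irreducibles of $M_{P}\cap K$ turns this into $\sum_{\delta}c_{\delta}\,[\sigma_{|M_{P}\cap K}:\delta]$, whereas $d_{\gamma}=\sum_{\delta}c_{\delta}\dim W_{\delta}$. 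The whole inequality therefore reduces to the single estimate $[\sigma_{|M_{P}\cap K}:\delta]\le\dim W_{\delta}$ for each irreducible $\delta$ of $M_{P}\cap K$.

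To obtain that estimate I would invoke the subquotient theorem and induction in stages exactly as indicated before the statement. Since $\sigma$ is irreducible, hence admissible, the subquotient theorem applied to ${}^{o}M_{P}$ realizes the Harish-Chandra module of $\sigma$ as a subquotient of a minimal principal series of ${}^{o}M_{P}$; restricting to $M_{P}\cap K$ and using that the minimal Levi factor of ${}^{o}M_{P}$ has compact part $M_{o}\subset K$, this exhibits $H_{\sigma}$, as an $M_{P}\cap K$-module, as a subquotient of $\mathrm{Ind}_{M_{o}}^{M_{P}\cap K}\xi$ for some finite-dimensional irreducible $\xi$ of $M_{o}$. Hence $[\sigma_{|M_{P}\cap K}:\delta]\le[\mathrm{Ind}_{M_{o}}^{M_{P}\cap K}\xi:\delta]=[\delta_{|M_{o}}:\xi]$, again by Frobenius reciprocity, and the last multiplicity is at most $\dim W_{\delta}$. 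Summing over $\delta$ yields $m_{\gamma}\le d_{\gamma}$ and therefore $\dim H_{[P],\sigma}[\gamma]\le d_{\gamma}^{2}$. The one point requiring care here is that the subquotient realization and induction in stages must be carried out at the level of $M_{P}\cap K$-modules, so that the bound $[\sigma:\delta]\le\dim W_{\delta}$ is genuinely inherited by a subquotient; this is legitimate precisely because the $K$-type decomposition of $H_{[P],\sigma}$ is independent of $\nu$.

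The second inequality is the routine part. Letting $\Lambda$ be the highest weight of $\gamma$ with respect to a maximal torus of $K$ and $\rho_{K}$ the half-sum of the positive roots of $\mathfrak{k}$, the Weyl dimension formula writes $d_{\gamma}$ as a product over positive roots of linear expressions in $\Lambda$, hence a polynomial in $\Lambda$ of degree equal to the number of positive roots, namely $(\dim K-\mathrm{rank}(K))/2$; thus $d_{\gamma}^{2}\le C(1+|\Lambda|)^{\dim K-\mathrm{rank}(K)}$. Since $\lambda_{\gamma}=|\Lambda+\rho_{K}|^{2}-|\rho_{K}|^{2}\ge|\Lambda|^{2}$ by dominance of $\Lambda$, one has $(1+|\Lambda|)^{2}\le 2(1+\lambda_{\gamma})$, and combining the two estimates gives the claim; in fact this lands on the exponent $(\dim K-\mathrm{rank}(K))/2$, so the stated exponent holds with room to spare.
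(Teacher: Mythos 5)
Your proof is correct and follows exactly the route the paper indicates: the paper's entire ``proof'' is the single sentence preceding the statement, citing the subquotient theorem, induction in stages, the Weyl dimension formula, and the formula for $\lambda_{\gamma}$ in terms of the highest weight, and your argument is a faithful fleshing-out of precisely those four ingredients (reduction to the multiplicity bound $[\sigma_{|M_{P}\cap K}:\delta]\leq\dim W_{\delta}$ via the subquotient theorem for ${}^{o}M_{P}$, then the elementary polynomial estimate on $d_{\gamma}$). The only cosmetic difference is that you phrase induction in stages as two successive applications of Frobenius reciprocity, which amounts to the same thing.
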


\begin{proposition}
\label{tempered-CK}Let $(P,A)$ be a parabolic pair and $(\sigma,H_{\sigma}) $
be an irreducible square integrable representation of ${}^{o}M_{P}$ then there
exist $r_{\sigma}$ and $C_{\sigma}$ such that if $\gamma.\eta\in\hat{K} $ and
$u\in H_{[P],\sigma}[\gamma]$ and $w\in H_{[P],\sigma}[\eta] $ then
\[
\left\vert \left\langle \pi_{P,\sigma,i\nu}(g)u,w\right\rangle \right\vert
\leq C_{\sigma}(1+\lambda_{\gamma})^{r_{\sigma}}(1+\lambda_{\eta})^{r_{\sigma
}}\left\Vert u\right\Vert \left\Vert w\right\Vert \Xi(g)
\]%
\[
\leq C_{\sigma}C(1+\lambda_{\gamma})^{r_{\sigma}}(1+\lambda_{\eta}%
)^{r_{\sigma}}\left\Vert u\right\Vert \left\Vert w\right\Vert \left\vert
g\right\vert ^{-\frac{1}{2}}(1+\log\left\Vert g\right\Vert )^{d}%
\]
with $C,d$ as in $(\ast)$ in the previous section.
\end{proposition}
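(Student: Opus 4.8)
The plan is to bound matrix coefficients of the tempered induced representation $\pi_{P,\sigma,i\nu}$ by the basic spherical function $\Xi$, with an explicit polynomial dependence on the $K$-types through the Casimir eigenvalues $\lambda_\gamma$ and $\lambda_\eta$. The natural strategy is to reduce the estimate to the case of $K$-fixed vectors, where the matrix coefficient is essentially $\Xi$ itself, and then account for the passage from the trivial $K$-type to arbitrary $K$-types $\gamma,\eta$ by controlling how the projections $E_\gamma, E_\eta$ and the associated intertwining data grow. Since $\sigma$ is square integrable, its own matrix coefficients on ${}^oM_P$ already satisfy a suitable decay estimate (the Harish-Chandra estimate for discrete series), and induction up to $G$ should convert this into the claimed bound governed by $\Xi$.

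First I would write the matrix coefficient $\langle \pi_{P,\sigma,i\nu}(g)u,w\rangle$ explicitly as an integral over $K$ using the compact picture of the induced representation, namely $\langle \pi_{P,\sigma,i\nu}(g)u,w\rangle = \int_K \langle {}_Pu_{i\nu}(kg), w(k)\rangle\,dk$, unwinding the definition ${}_Pf_{i\nu}(nmk)=\sigma_{i\nu}(m)f(k)$. Because $\nu$ is purely imaginary, the factor $a^{i\nu}$ coming from the $A_P$-component has absolute value one, so it contributes nothing to the size estimate; what remains is a matrix coefficient of $\sigma$ on ${}^oM_P$ together with the exponential $a^{\rho_P}$ arising from the $M_P$-component of $kg$. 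The key analytic input is then the square-integrability of $\sigma$: its matrix coefficients decay like the analogue of $\Xi$ for ${}^oM_P$, and combined with the Iwasawa/Langlands bookkeeping the integrand is dominated pointwise by a constant times $\Xi(g)$, which is precisely the content of Harish-Chandra's theory of the weak inequality for induced tempered representations.

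The part requiring care is the polynomial control in $\lambda_\gamma$ and $\lambda_\eta$. Here I would use that $u\in H_{[P],\sigma}[\gamma]$ and $w\in H_{[P],\sigma}[\eta]$ are $K$-finite, so they are smooth and can be expressed via the projection operators $E_\gamma u = u$, $E_\eta w = w$. The growth of these vectors in the relevant Sobolev-type norms is governed by powers of $(1+\lambda_\gamma)$ and $(1+\lambda_\eta)$, since the Casimir $C_K$ acts on the $\gamma$-isotypic part by the scalar $\lambda_\gamma$ and $\Xi$ is a $K$-bi-invariant spherical function. I would invoke the first Proposition of this section, which already gives $\dim H_{[P],\sigma}[\gamma]\le C(1+\lambda_\gamma)^{\dim K - \mathrm{rank}(K)}$, to bound the number of basis vectors one must sum over, and then estimate each elementary matrix coefficient against $\Xi$ uniformly. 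Absorbing the dimension bound into the polynomial factors produces the exponents $r_\sigma$, which depend on $\sigma$ only through the decay rate of its square-integrable matrix coefficients.

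The main obstacle I anticipate is making the reduction to $\Xi$ uniform in $g$ while simultaneously tracking the $K$-type dependence, rather than for each fixed pair of vectors separately; in other words, the delicate point is that the implied constant must split as $C_\sigma$ times a product of polynomials in $\lambda_\gamma$ and $\lambda_\eta$, with no hidden dependence on $g$ or on the particular vectors $u,w$ beyond their norms. Establishing this cleanly amounts to a careful application of the standard estimates for matrix coefficients of representations induced from square-integrable data (as in \cite{RRGI} Chapter 4, especially the bounds surrounding $(\ast)$), organized so that the elliptic-regularity growth of $K$-finite vectors is isolated into the stated polynomial factors. Once the $g$-uniform domination by $\Xi(g)$ is in hand, the second displayed inequality follows immediately by substituting the upper bound for $\Xi$ from $(\ast)$.
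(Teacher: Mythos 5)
Your overall architecture---dominate the matrix coefficient by $\Xi(g)$ with a constant that depends on the vectors only through continuous seminorms, then convert that dependence into powers of $(1+\lambda_{\gamma})$, $(1+\lambda_{\eta})$ via the action of $C_{K}$ on isotypic components, and finally substitute $(\ast)$---is the same as the paper's. But both load-bearing inputs are left unsupplied or misidentified, and this is where the actual content of the proposition lies. First, the $\Xi$-bound you need is not obtained by the pointwise domination you sketch: after the Iwasawa/Langlands unwinding, the integrand over $K$ is \emph{not} pointwise dominated by a constant times $\Xi(g)$; the domination only emerges after integration, via Harish-Chandra's estimates, and---crucially for this proposition---with a constant that must be a single continuous seminorm $q$ on $H_{[P],\sigma}^{\infty}$, applied to each vector, \emph{uniform in $\nu\in\mathfrak{a}_{P}^{\ast}$ and in $g$}. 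That precise statement is what the paper cites (\cite{WPT} Corollary 65). Your fallback reference (\cite{RRGI} Chapter 4, the bounds surrounding $(\ast)$) concerns the function $\Xi$ itself and supplies nothing of this kind.

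Second, your proposed execution of the $K$-type tracking---expand $u,w$ in orthonormal bases of $H_{[P],\sigma}[\gamma]$ and $H_{[P],\sigma}[\eta]$, ``estimate each elementary matrix coefficient against $\Xi$ uniformly,'' and absorb the dimension bound $\dim H_{[P],\sigma}[\gamma]\leq C(1+\lambda_{\gamma})^{\dim K-rank(K)}$---begs the question: a bound $\left\vert \left\langle \pi_{P,\sigma,i\nu}(g)v_{i},w_{j}\right\rangle \right\vert \leq C\,\Xi(g)$ with $C$ independent of the $K$-types is exactly the uniformity at issue, and it is not available without either dimension factors (Cowling--Haagerup--Howe, which you never invoke and the paper does not use) or Sobolev factors. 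The paper's mechanism is the latter, and it requires one specific fact you never state: the $K$-$C^{\infty}$ vectors of $\pi_{P,\sigma,i\nu}$ coincide with the $G$-$C^{\infty}$ vectors as Fr\'{e}chet spaces (\cite{CWParam} Proposition A.2, proved by elliptic regularity), so that the seminorm $q$ is dominated by $B_{\sigma}\left\Vert (1+C_{K})^{d_{\sigma}}\cdot\right\Vert$. Once that is in hand, $q(u)\leq B_{\sigma}(1+\lambda_{\gamma})^{d_{\sigma}}\left\Vert u\right\Vert$ for $u\in H_{[P],\sigma}[\gamma]$, and the proposition follows at once with $C_{\sigma}=B_{\sigma}^{2}$ and $r_{\sigma}=d_{\sigma}$---no basis expansion, no dimension count. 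You allude to ``elliptic-regularity growth'' but never formulate this identification of topologies, and it is precisely the step that produces the polynomial factors.
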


\begin{proof}
In \cite{WPT} Corollary 65 we proved that there exists a continuous seminorm
$q$ on $H_{\left[  P\right]  ,\sigma}^{\infty}$ such that if $f,h\in
H_{[P],\sigma}^{\infty}$ then%
\[
\left\vert \left\langle \pi_{P,\sigma,i\nu}(g)f,h\right\rangle \right\vert
\leq q(f)q(h)\Xi(g).
\]
Since the $K$--$C^{\infty}$ vectors of $\pi_{P,\sigma,i\nu}$ are the same as
the $G$--$C^{\infty}$ vectors (\cite{CWParam} Proposition A.2) there exists
$d_{\sigma}$ and $B_{\sigma}$ such that
\[
q(f)\leq B_{\sigma}\left\Vert (1+C_{K})^{d_{\sigma}}f\right\Vert .
\]
Take $C_{\sigma}=B_{\sigma}^{2}$.
\end{proof}

Let $\{v_{i}\}$ be an orthonormal basis of $H_{[P],\sigma}$ such that for each
$i$ there exists $\gamma_{i}\in\hat{K}$ such that $v_{i}\in H_{[P],\sigma
}[\gamma_{i}]$.

\begin{proposition}
\label{Weyl-ests}Let $f\in\mathcal{C}(G)$ then for each $r,s$ there exists a
continuous seminorm $q_{r,s,\sigma}$ such that%
\[
\left\vert \int_{G}f(g)\left\langle \pi_{P,\sigma,i\nu}(g)v_{i},v_{j}%
\right\rangle dg\right\vert \leq\frac{q_{r,s,\sigma}(f)}{(1+\lambda
_{\gamma_{i}})^{r}(1+\lambda_{\gamma_{j}})^{s}}.
\]

\end{proposition}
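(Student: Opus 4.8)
The plan is to bound the matrix-coefficient integral by using Proposition \ref{tempered-CK} to control the growth of $\langle \pi_{P,\sigma,i\nu}(g)v_i,v_j\rangle$ in terms of $\Xi(g)$ (equivalently $|g|^{-1/2}$), then to convert the polynomial factors $(1+\lambda_{\gamma_i})^{r_\sigma}(1+\lambda_{\gamma_j})^{s_\sigma}$ into arbitrarily high negative powers of $\lambda_{\gamma_i},\lambda_{\gamma_j}$ by moving the Casimir operator $C_K$ off the representation vectors and onto the Schwartz function $f$. Concretely, since $v_i\in H_{[P],\sigma}[\gamma_i]$ is a $C_K$-eigenvector with eigenvalue $\lambda_{\gamma_i}$, for any integers $a,b$ we have
\[
(1+\lambda_{\gamma_i})^a(1+\lambda_{\gamma_j})^b\langle \pi(g)v_i,v_j\rangle=\langle \pi(g)(1+d\pi(C_K))^a v_i,(1+d\pi(C_K))^b v_j\rangle,
\]
where $C_K\in U(\mathfrak{k})\subset U(\mathfrak{g})$ acts through the left/right regular representations when we pass to the integral.

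First I would use that $C_K$ is a left- and right-$K$-invariant element of $U(\mathfrak{g})$, so that inside the integral $\int_G f(g)\langle \pi_{P,\sigma,i\nu}(g)v_i,v_j\rangle\,dg$ I can integrate by parts: applying $d\pi(C_K)$ to the vector $v_i$ under the right action and to $v_j$ under the left action is the same, up to sign conventions built into $L_x,R_y$, as applying the corresponding differential operators to $f$. This transfers the factor $(1+\lambda_{\gamma_i})^{-r}(1+\lambda_{\gamma_j})^{-s}$ outside and replaces $f$ by $(1+R_{C_K})^N(1+L_{C_K})^M f$ for suitable $N,M$ chosen large enough to absorb the exponents $r_\sigma,s_\sigma$ coming from Proposition \ref{tempered-CK} plus the target powers $r,s$. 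Since $\mathcal{C}(G)$ is stable under $L_x,R_y$ and the seminorms $p_{x,y,k}$ control these derivatives, $(1+R_{C_K})^N(1+L_{C_K})^M f$ again lies in $\mathcal{C}(G)$ with its Schwartz seminorms bounded by seminorms of $f$.

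Then I would apply the bound from Proposition \ref{tempered-CK} to the now-modified integrand, pulling out $\|(1+C_K)^{a}v_i\|=(1+\lambda_{\gamma_i})^{a}$ and $\|(1+C_K)^{b}v_j\|=(1+\lambda_{\gamma_j})^{b}$ (using that the $v_i$ are unit vectors), leaving
\[
\Big|\int_G \tilde f(g)\langle \pi_{P,\sigma,i\nu}(g)v_i,v_j\rangle\,dg\Big|\le C_\sigma C\,(1+\lambda_{\gamma_i})^{r_\sigma}(1+\lambda_{\gamma_j})^{s_\sigma}\int_G |\tilde f(g)|\,|g|^{-1/2}(1+\log\|g\|)^d\,dg,
\]
where $\tilde f=(1+R_{C_K})^N(1+L_{C_K})^M f$. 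The remaining integral is finite: by the Schwartz estimate $|\tilde f(g)|\le p_{1,1,k}(\tilde f)(1+\log\|g\|)^{-k}|g|^{-1/2}$ for every $k$, and the estimate $(\ast)$ together with Harish-Chandra's integrability result (that $\int_G \Xi(g)^2(1+\log\|g\|)^{-k}\,dg<\infty$ for $k$ large, equivalently $\int_G |g|^{-1}(1+\log\|g\|)^{-k}dg<\infty$) makes $\int_G |\tilde f(g)||g|^{-1/2}(1+\log\|g\|)^d\,dg$ bounded by a Schwartz seminorm of $\tilde f$, hence of $f$. Collecting everything, with $N,M$ chosen so that $a=N,b=M$ satisfy $a-r_\sigma=r$ and $b-s_\sigma=s$, yields the claimed bound with $q_{r,s,\sigma}(f)$ a continuous seminorm on $\mathcal{C}(G)$.

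The main obstacle I anticipate is the integration-by-parts step: one must check carefully that $C_K$ acting on $v_i$ through the representation $\pi_{P,\sigma,i\nu}$ really does correspond to $R_{C_K}$ (and on $v_j$ to $L_{C_K}$, with the correct transpose/adjoint) under the pairing $g\mapsto\langle\pi(g)v_i,v_j\rangle$, and that these manipulations are justified (the integrals converge absolutely at every stage, so Fubini and differentiation under the integral apply). This is where the identification of the $K$-$C^\infty$ vectors with the $G$-$C^\infty$ vectors, already invoked in Proposition \ref{tempered-CK}, is essential, since it guarantees $v_i,v_j$ are genuine smooth vectors on which $d\pi(C_K)$ acts as scalar multiplication by the eigenvalue.
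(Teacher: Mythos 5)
Your proposal is correct and takes essentially the same route as the paper's proof: the paper likewise uses the $C_K$-eigenvector property of $v_i,v_j$ and self-adjointness of $C_K$ to integrate by parts, trading $L_{(1+C_K)^l}R_{(1+C_K)^m}$ acting on $f$ for the factors $(1+\lambda_{\gamma_i})^{-m}(1+\lambda_{\gamma_j})^{-l}$, then applies Proposition \ref{tempered-CK} together with the finiteness of $\int_G \vert g\vert^{-1}(1+\log\Vert g\Vert)^{-r}\,dg$ for large $r$, and finally chooses the exponents large enough to dominate $r_\sigma$. The transpose/sign bookkeeping you flag as the main obstacle is exactly the chain of displayed equalities carried out (in the reverse direction) in the paper's proof, so your argument matches it step for step.
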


\begin{proof}
Note that for all $l>0$ we have%
\[
\left\vert \left(  L_{(1+C_{K})^{l}}R_{(1+C_{K})^{m}}f\right)  (g)\right\vert
\leq p_{(1+C_{K})^{l},(1+C_{K})m,d}(f)\Xi(g)(1+\log\left\Vert g\right\Vert
)^{-l}.
\]
Thus for all $r$%
\[
\left\vert \int_{G}\left(  L_{(1+C_{K})^{l}}R_{(1+C_{K})^{m}}f\right)
(g)\left\langle \pi_{P,\sigma,i\nu}(g)v_{i},v_{j}\right\rangle dg\right\vert
\leq
\]%
\[
C_{\sigma}p_{(1+C_{K})^{l},(1+C_{K})^{m},d+r}(f)C_{\sigma}(1+\lambda
_{\gamma_{i}})^{d_{\sigma}}(1+\lambda_{\gamma_{j}})^{d_{\sigma}}\int
_{G}\left\vert g\right\vert ^{-1}(1+\log\left\Vert g\right\Vert )^{-r}dg.
\]
($d$ is as in Proposition \ref{tempered-CK}$)$Take $r>0$ so large that
\[
\int_{G}\left\vert g\right\vert ^{-1}(1+\log\left\Vert g\right\Vert
)^{-r}dg<M<\infty.
\]
We calculate%
\[
\int_{G}\left(  L_{(1+C_{K})^{l}}R_{(1+C_{K})^{m}}f\right)  f(g)\left\langle
\pi_{P,\sigma,i\nu}(g)v_{i},v_{j}\right\rangle dg=
\]%
\[
\int_{G}f(g)\left\langle \pi_{P,\sigma,i\nu}(1+C_{K})^{l}\pi_{P,\sigma,i\nu
}(g)\pi_{P,\sigma,i\nu}(1+C_{K})^{m}v_{i},v_{j}\right\rangle dg=
\]%
\[
\int_{G}f(g)\left\langle \pi_{P,\sigma,i\nu}(g)\pi_{P,\sigma,i\nu}%
(1+C_{K})^{m}v_{i},\pi_{P,\sigma,i\nu}(1+C_{K})^{l}v_{j}\right\rangle =
\]%
\[
(1+\lambda_{\gamma_{i}})^{m}(1+\lambda_{\gamma_{j}})^{l}\int_{G}%
f(g)\left\langle \pi_{P,\sigma,i\nu}(g)v_{i},v_{j}\right\rangle dg.
\]
Thus%
\[
\left\vert \int_{G}f(g)\left\langle \pi_{P,\sigma,i\nu}(g)v_{i},v_{j}%
\right\rangle dg\right\vert \leq
\]%
\[
(1+\lambda_{\gamma_{i}})^{-m}(1+\lambda_{\gamma_{j}})^{-l}\left\vert \int
_{G}\left(  L_{(1+C_{K})^{l}}R_{(1+C_{K})^{m}}f\right)  (g)\left\langle
\pi_{P,\sigma,i\nu}(g)v_{i},v_{j}\right\rangle dg\right\vert
\]%
\[
\leq C_{\sigma}p_{(1+C_{K})^{l},(1+C_{K})m,d+r}(f)(1+\lambda_{\gamma_{i}%
})^{r_{\sigma}-m}(1+\lambda_{\gamma_{j}})^{r_{\sigma}-l}.
\]
Take $q_{r,s,\sigma}(f)=C_{\sigma}p_{(1+C_{K})^{l},(1+C_{K})m,d+r}.$
\end{proof}

This result implies that if we define for $\nu\in\mathfrak{a}^{\ast}$%
\[
a_{ij}(\nu)(f)=\int_{G}f(g)\left\langle \pi_{P,\sigma,i\nu}(g)v_{i}%
,v_{j}\right\rangle dg
\]
then
\[
\sum_{ij}\left\vert a_{ij}(\nu)(f)\right\vert \leq\sum_{ij}\frac
{q_{r,s,\sigma}(f)}{(1+\lambda_{\gamma_{i}})^{r}(1+\lambda_{\gamma_{j}})^{s}}%
\]
for all $r.s>0$. Thus if $r,s>\frac{\dim K}{2}$ then the series converges.
Thus, defining
\[
A(\nu)(f)v_{i}=\sum a_{ij}(\nu)(f)v_{j}%
\]
then $A(\nu)$ defines a trace class operator on $H_{[P],\sigma}$ and that
\[
\left\vert \mathrm{tr}A(\nu)(f)\right\vert \leq Cq_{r,s,\sigma}(f)
\]
also note that $A(\nu)(f)$ is of Hilbert-Schmidt class and%
\[
\sqrt{\mathrm{tr}A(\nu)(f)A(\nu)(f)^{\ast}}\leq C^{\prime}q_{r,s,\sigma}(f).
\]
Define $\pi_{P,\sigma,i\nu}(f)=A(\nu)(f)$ and note that using integration with
values in Fr\'{e}chet spaces one has%
\[
\pi_{P,\sigma,i\nu}(f)v=\int_{G}f(g)\pi_{P,\sigma,i\nu}(g)vdg
\]
for $v\in H_{[P],\sigma}^{\infty}$

Let $\mathcal{S(}H_{[P],\sigma})$ be the set of all bounded operators, $T$, on
$H_{[P],\sigma}$ such that $\pi_{P,\sigma}(1+C_{K})^{l}T\pi_{P,\sigma}%
(1+C_{K})^{m}$ is a bounded operator for all $l,k$. In terms of the
orthonormal basis that we have been using this means that for each $l,m>0$
there exists $C_{l,m}$ such that
\[
|\left\langle Tv_{i},v_{j}\right\rangle |\leq C_{l,m}(1+\lambda_{\gamma_{i}%
})^{-l}(1+\lambda_{\gamma_{j}})^{-m}.
\]
Endow $\mathcal{S(}H_{[P],\sigma})$ with the Fr\'{e}chet space topology given
by the seminorms $r_{l,k}(T)=\left\Vert \pi_{P,\sigma}(1+C_{K})^{l}%
T\pi_{P,\sigma}(1+C_{K})^{m}\right\Vert _{HS}$. We note that we could have
used the operator norm to get an equivalent set of seminorms.

Let $\mathcal{S}(G)$ be the space defined in 7.1.2 of \cite{RRGI}, that is,
the space of all $f\in C^{\infty}(G)$ such that%
\[
\mu_{r,x}(f)=\mathrm{sup}_{g\in G}\left\Vert g\right\Vert ^{r}\left\vert
xf(g)\right\vert <\infty
\]
for all $x\in U(\mathfrak{g})$ and $r>0$. If $(\pi,H)$ is a Banach
representation of $G$ and $f\in\mathcal{S}(G)$ then $\pi(f)H\subset H^{\infty
}$. One can show (\cite{RRGII} Theorem 11.8.2) that $\pi_{P,\sigma,i\nu}$ for
$\nu\in\mathfrak{a}^{\ast}$ is irreducible if and only if $\pi_{P,\sigma,i\nu
}(\mathcal{S}(G))=\mathcal{S(}H_{[P],\sigma})$. Since $\mathcal{C}%
(G)\supset\mathcal{S}(G)$ and by the above $\pi_{P,\sigma,i\nu}(\mathcal{C}%
(G))\subset\mathcal{S(}H_{[P],\sigma})$ , we have

\begin{lemma}
\label{Totality}If $\nu\in\mathfrak{a}^{\ast}$ and $\pi_{P,\sigma,i\nu} $ is
irreducible then $\pi_{P,\sigma,i\nu}(\mathcal{C}(G))=\mathcal{S(}%
H_{[P],\sigma})$.
\end{lemma}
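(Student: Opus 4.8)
The plan is a two-sided containment (a squeeze) that requires no new estimates beyond those already assembled just before the statement. The key point is that both the inclusion $\mathcal{S}(G)\subseteq\mathcal{C}(G)$ and the reverse-direction bound $\pi_{P,\sigma,i\nu}(\mathcal{C}(G))\subseteq\mathcal{S}(H_{[P],\sigma})$ are in hand, so the only thing that the irreducibility hypothesis must supply is that $\mathcal{S}(H_{[P],\sigma})$ already lies inside $\pi_{P,\sigma,i\nu}(\mathcal{C}(G))$.

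First I would invoke the cited Theorem 11.8.2 of \cite{RRGII}: under the hypothesis that $\pi_{P,\sigma,i\nu}$ is irreducible one has the exact surjectivity $\pi_{P,\sigma,i\nu}(\mathcal{S}(G))=\mathcal{S}(H_{[P],\sigma})$ already at the level of the smaller rapidly decreasing Schwartz space $\mathcal{S}(G)$. Since $\mathcal{S}(G)\subseteq\mathcal{C}(G)$, applying $\pi_{P,\sigma,i\nu}$ gives
\[
\mathcal{S}(H_{[P],\sigma})=\pi_{P,\sigma,i\nu}(\mathcal{S}(G))\subseteq\pi_{P,\sigma,i\nu}(\mathcal{C}(G)),
\]
which is the nontrivial inclusion.

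For the opposite inclusion I would appeal to Proposition \ref{Weyl-ests} and the discussion following it: for $f\in\mathcal{C}(G)$ the operator $\pi_{P,\sigma,i\nu}(f)=A(\nu)(f)$ has matrix entries $a_{ij}(\nu)(f)=\langle\pi_{P,\sigma,i\nu}(f)v_i,v_j\rangle$ obeying, for every $r,s$, the bound $|a_{ij}(\nu)(f)|\leq q_{r,s,\sigma}(f)(1+\lambda_{\gamma_i})^{-r}(1+\lambda_{\gamma_j})^{-s}$. This is precisely the defining decay condition for membership in $\mathcal{S}(H_{[P],\sigma})$, so $\pi_{P,\sigma,i\nu}(\mathcal{C}(G))\subseteq\mathcal{S}(H_{[P],\sigma})$. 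Combining the two containments yields the claimed equality.

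Since the two genuine inputs—the continuity estimate of Proposition \ref{Weyl-ests} giving the $\subseteq$ direction, and the reducibility criterion of \cite{RRGII} giving surjectivity already from the smaller space $\mathcal{S}(G)$—are taken as known, the lemma itself carries no additional obstacle; it is a formal consequence of sandwiching $\pi_{P,\sigma,i\nu}(\mathcal{C}(G))$ between $\pi_{P,\sigma,i\nu}(\mathcal{S}(G))$ and $\mathcal{S}(H_{[P],\sigma})$, which coincide. The only point requiring a moment's care is that Theorem 11.8.2 states the surjectivity for $\mathcal{S}(G)$ rather than for $\mathcal{C}(G)$; passing to the larger space is exactly what the inclusion $\mathcal{S}(G)\subseteq\mathcal{C}(G)$ permits, so that no independent surjectivity statement for $\mathcal{C}(G)$ need be proved.
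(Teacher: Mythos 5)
Your proposal is correct and matches the paper's own argument exactly: the paper derives Lemma \ref{Totality} from precisely this sandwich, citing Theorem 11.8.2 of \cite{RRGII} for $\pi_{P,\sigma,i\nu}(\mathcal{S}(G))=\mathcal{S}(H_{[P],\sigma})$ together with the inclusion $\mathcal{S}(G)\subset\mathcal{C}(G)$, and using the estimates following Proposition \ref{Weyl-ests} for the containment $\pi_{P,\sigma,i\nu}(\mathcal{C}(G))\subset\mathcal{S}(H_{[P],\sigma})$. Nothing is missing, and no further comment is needed.
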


Fix a parabolic pair $(P,A)$. Recall that $N_{K}(A)$ is the set of elements in
$k\in K$ such that $kAk^{-1}\subset A$. Let $(\sigma,H_{\sigma})$ be a
strongly continuous representation of ${}^{o}M_{P}$ on a Fr\'{e}chet Space. if
$k\in N_{K}(A)$ then define the representation $(k\sigma,H_{\sigma})$ by
$k\sigma(m)=(\sigma(k^{-1}mk)$ for $m\in{}^{o}M_{P}$, if $\nu\in
(\mathfrak{a}_{P})_{\mathbb{C}}^{\ast}$ then set $k\nu(h)=\nu(Ad(k)^{-1}h)$.
If $\nu\in(\mathfrak{a}_{P})_{\mathbb{C}}^{\ast}$ and $(Q,A)$ is another
parabolic pair (that is, $Q$ and $P$ are associate) let
\[
J_{Q|P}(\sigma,\nu):H_{[P],\sigma}^{\infty}\rightarrow H_{[Q],\sigma}^{\infty}%
\]
be as in 10.1.11 in \cite{RRGII}. Then we saw that $\nu\mapsto J_{Q|P}%
(\sigma,\nu)$, initially defined in an open subset of $(\mathfrak{a}%
_{P})_{\mathbb{C}}^{\ast}$ has a weakly meromorphic continuation to all of
$(\mathfrak{a}_{P})_{\mathbb{C}}^{\ast}$ of operators on $H_{[P],\sigma
}^{\infty}$satisfying%
\[
J_{Q|P}(\sigma,\nu)\pi_{P,\sigma,\nu}(g)=\pi_{Q,\sigma,\nu}(g)J_{Q|P}%
(\sigma,\nu),g\in G.
\]
Also note that if $k\in N_{K}(A)$ and if $f\in H_{[P],\sigma}^{\infty}$ then
$L_{k}(f)\in H_{P,k\sigma}^{\infty}$ and
\[
L_{k}\pi_{k^{-1}Qk,\sigma,\nu}(g)=\pi_{Q,k\sigma,k\nu}(g)L_{k}.
\]
Thus, if we define%
\[
A_{k}(P,\sigma,\nu)=L_{k}J_{k^{-1}Pk|P}(\sigma,\nu)
\]
then
\[
A_{k}(P,\sigma,\nu)\pi_{P,\sigma,\nu}(g)=\pi_{P,k\sigma,k\nu}(g)A_{k}%
(P,\sigma,\nu).
\]

Let $(\sigma,H_{\sigma})$ be an irreducible square integrable representation
of ${}^{o}M_{P}$. Let $\mu(\sigma,\nu)$ be Harish-Chandra's $\mu$--function
(with the additional constants for the Plancherel Theorem) and set
\[
\left(  \mathfrak{a}_{P}^{\ast}\right)  ^{\prime}=\{\nu\in\mathfrak{a}%
_{P}^{\ast}|(\alpha,\nu)\neq0,\alpha\in\Phi(P,A_{P})\}
\]
then we will be using the following result of Harish-Chandra

\begin{theorem}
If $\nu\in\left(  \mathfrak{a}_{P}^{\ast}\right)  ^{\prime}$ then $\mu
(\sigma,\nu)\neq0$.
\end{theorem}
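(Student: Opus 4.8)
The plan is to reduce the assertion to a one--dimensional (split rank one) computation by means of Harish--Chandra's product formula for the $\mu$--function, and then to invoke the explicit rank--one determination of the Plancherel density. First I would recall how $\mu$ enters through the intertwining operators of the preceding discussion: since $\sigma$ is irreducible and square integrable, the $G$--equivariant composite $J_{P|\bar P}(\sigma,\nu)\,J_{\bar P|P}(\sigma,\nu)\colon H^{\infty}_{[P],\sigma}\to H^{\infty}_{[P],\sigma}$ is a scalar operator, and $\mu(\sigma,\nu)$ is, up to a fixed nonzero Plancherel constant $\gamma$, its reciprocal, so that $J_{P|\bar P}(\sigma,\nu)\,J_{\bar P|P}(\sigma,\nu)=\gamma\,\mu(\sigma,\nu)^{-1}\,\mathrm{Id}$. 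Consequently $\mu(\sigma,\nu)\neq 0$ is equivalent to the statement that this composite of meromorphically continued intertwining operators is holomorphic (has no pole) at $\nu$; this reformulation is what the argument targets.

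\textbf{Step 1 (reduction to rank one).} I would choose a minimal chain of associate parabolic pairs $P=P_{0},P_{1},\dots,P_{r}=\bar P$ with consecutive members adjacent, so that $J_{\bar P|P}(\sigma,\nu)$ factors as the composite of the rank--one operators $J_{P_{j+1}|P_{j}}(\sigma,\nu)$; the number of steps equals the number of reduced roots in $\Phi(P,A_{P})$, and the $j$--th factor is governed only by the reduced root $\alpha_{j}$ being crossed. This yields the product formula $\mu(\sigma,\nu)=\prod_{\alpha}\mu_{\alpha}(\sigma,\nu)$, where $\alpha$ runs over the reduced roots in $\Phi(P,A_{P})$ and $\mu_{\alpha}(\sigma,\nu)$ depends on $\nu$ only through the number $\langle\nu,\alpha\rangle$ (and on $\sigma$ only through its restriction to ${}^{o}M_{P_{\alpha}}$, $P_{\alpha}$ the rank--one standard parabolic attached to $\alpha$). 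Since $\nu\in(\mathfrak a_{P}^{\ast})'$ means exactly that $\langle\nu,\alpha\rangle\neq 0$ for every $\alpha\in\Phi(P,A_{P})$, it suffices to prove that each factor $\mu_{\alpha}(\sigma,\nu)$ is nonzero whenever $\langle\nu,\alpha\rangle\neq 0$.

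\textbf{Step 2 (the rank--one case --- the main obstacle).} Here the relevant split direction is one--dimensional and $\mu_{\alpha}(\sigma,\nu)$ is a meromorphic function of the single complex variable $s=\langle\nu,\alpha^{\vee}\rangle$; the task is to show that, restricted to real $s$, its only zero is at $s=0$. This is precisely Harish--Chandra's explicit computation of the rank--one Plancherel density: one analyses the integral defining $J_{\bar P_{\alpha}|P_{\alpha}}(\sigma,s)$ on the half--plane where it converges, continues it, and expresses $\mu_{\alpha}$ through the rank--one $c$--functions, obtaining a closed form (a ratio of Gamma factors, equivalently products of elementary hyperbolic functions times a linear factor in $s$). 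From this one reads off that on the real axis the zeros arise only from the linear factor $s$, i.e.\ from the wall $\langle\nu,\alpha\rangle=0$, the transcendental factors being nonvanishing there. I expect this explicit rank--one analysis to be the genuine analytic heart of the proof; everything else is formal bookkeeping about chains of parabolics and the scalarity already built into the $c$--function formalism.

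Combining the two steps, for $\nu\in(\mathfrak a_{P}^{\ast})'$ every argument $\langle\nu,\alpha\rangle$ is nonzero, hence every factor $\mu_{\alpha}(\sigma,\nu)\neq 0$, and therefore $\mu(\sigma,\nu)=\prod_{\alpha}\mu_{\alpha}(\sigma,\nu)\neq 0$. As a conceptual check I would note that $\mu(\sigma,\nu)$ is the density of a positive measure (the Plancherel measure restricted to this series), so it cannot vanish at regular points that genuinely occur in the tempered support; but since that positivity is itself part of what the Plancherel theorem ultimately delivers, I would retain the explicit rank--one argument as the actual proof rather than rely on it.
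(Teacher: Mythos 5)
Your proposal is correct in substance but follows a genuinely different route from the paper. The paper does not redo any analysis: its entire proof is the citation of Lemma 13.6.4 and Theorem 10.5.7 (iii) of \cite{RRGII}, which together encode exactly the mechanism you describe in your opening paragraph --- the composite $J_{P|\bar P}(\sigma,\nu)\,J_{\bar P|P}(\sigma,\nu)$ is the scalar $\gamma\,\mu(\sigma,\nu)^{-1}\,\mathrm{Id}$, and the intertwining operators are regular at regular imaginary points, so that scalar is finite there and $\mu(\sigma,i\nu)\neq 0$. You instead prove the analytic input yourself, by Harish-Chandra's original strategy: factor $J_{\bar P|P}$ along a minimal chain of adjacent parabolics to obtain the product formula $\mu(\sigma,\nu)=\prod_{\alpha}\mu_{\alpha}(\sigma,\nu)$ over the reduced roots of $\Phi(P,A_{P})$, then invoke the explicit rank-one Plancherel density to see that each factor has no real zeros off the wall $\langle\nu,\alpha\rangle=0$. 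What your route buys is self-containedness and a correct identification of where the analytic difficulty lives; what it costs is reliance on the explicit rank-one closed form, which is itself one of the deepest computations in the theory (it requires discrete series character theory), whereas the paper's citation route needs only the qualitative regularity of the intertwining operators and no explicit formula. Two corrections of detail: (1) the factor $\mu_{\alpha}(\sigma,\nu)$ is the $\mu$-function of $\sigma$ computed inside the rank-one group $M_{P_{\alpha}}\supset M_{P}$ (inducing from $P\cap M_{P_{\alpha}}$); it cannot be a function of a ``restriction'' of $\sigma$ to ${}^{o}M_{P_{\alpha}}$, since ${}^{o}M_{P}\subset{}^{o}M_{P_{\alpha}}$ and not conversely. (2) The rank-one density need not contain a linear factor in $s$ at all (e.g.\ $s\coth(\pi s)$ is nonvanishing at $s=0$, the case of reducibility at the wall); the correct statement, and all your argument needs, is that its real zeros are contained in $\{s=0\}$.
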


\begin{proof}
This is a direct consequence of c.f. Lemma 13.6.4 and Theorem 10.5.7 (iii) in
\cite{RRGII}.
\end{proof}

This result combined with the Harish-Chandra determination of the intertwining
algebra of $\pi_{P,\sigma,i\nu}$ (c.f. Theorem 13.6.6 \cite{RRGII}) implies

\begin{theorem}
If $\nu\in\left(  \mathfrak{a}_{P}^{\ast}\right)  ^{\prime}$ and $k\in
N_{K}(A_{P})$ then $A_{k}(P,\sigma,i\nu)$ is defined and invertible as an
operator from $H_{[P],\sigma}^{\infty}$.to $H_{[P],k\sigma}^{\infty}$
\end{theorem}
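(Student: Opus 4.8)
The plan is to peel off the manifestly invertible factor $L_k$ and reduce the whole statement to a holomorphy-and-invertibility assertion about the unnormalized intertwining operator $J_{k^{-1}Pk|P}(\sigma,i\nu)$, which is then controlled by the nonvanishing of $\mu(\sigma,\nu)$ just proved. First I would observe that $k\in N_K(A_P)$ is a genuine normalizer, so $k^{-1}A_Pk=A_P$ and $(k^{-1}Pk,A_P)$ is a parabolic pair with the same split component $A_P$, associate to $P$; thus $J_{k^{-1}Pk|P}(\sigma,\nu)$ is exactly one of the operators of 10.1.11 in \cite{RRGII}, with its weakly meromorphic continuation to $(\mathfrak{a}_P)_{\mathbb{C}}^{\ast}$. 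Since $L_k$ is a left translation it restricts to a topological (indeed unitary) isomorphism $H_{[k^{-1}Pk],\sigma}^{\infty}\to H_{[P],k\sigma}^{\infty}$, so $A_k(P,\sigma,i\nu)=L_kJ_{k^{-1}Pk|P}(\sigma,i\nu)$ is defined and invertible precisely when $J_{k^{-1}Pk|P}(\sigma,i\nu)$ is.

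Next I would invoke the multiplicativity of the unnormalized intertwining operators together with the functional equation that ties the round trip to the $\mu$--function. Writing $Q=k^{-1}Pk$, the composite $J_{P|Q}(\sigma,\nu)J_{Q|P}(\sigma,\nu)$ is a scalar operator, and by the very definition of Harish--Chandra's $\mu$--function (Theorem 10.5.7 in \cite{RRGII}) this scalar is, up to a nonzero constant and up to $c$--function factors that the theory shows are holomorphic and nonvanishing at regular imaginary points, the reciprocal of $\mu(\sigma,\nu)$. By the theorem just proved, $\mu(\sigma,\nu)\neq 0$ for $\nu\in(\mathfrak{a}_P^{\ast})'$, and $\mu$ is finite there, so this scalar is finite and nonzero at $i\nu$. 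A meromorphic operator-valued function whose composite with another is a full-rank scalar multiple of the identity at $i\nu$ can have neither a pole nor a rank drop there; hence both $J_{Q|P}(\sigma,i\nu)$ and $J_{P|Q}(\sigma,i\nu)$ are holomorphic at $i\nu$, and the nonzero scalar identity (read in both orders) exhibits each as invertible with inverse a scalar multiple of the other. For $P$ and $Q$ associate but not opposite, I would reach this by factoring $J_{Q|P}$ through a chain of parabolics sharing $A_P$ into rank--one steps and matching the corresponding factorization of $\mu$ into its rank--one contributions, so that $\mu(\sigma,\nu)\neq 0$ forces each rank--one factor, hence the composite, to be holomorphic and invertible at $i\nu$.

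Finally, to package this in the form the paper uses, I would identify the round-trip composite $A_{k^{-1}}(P,k\sigma,ki\nu)A_k(P,\sigma,i\nu)$ as an element of the commuting algebra of $\pi_{P,\sigma,i\nu}$ and apply Harish--Chandra's determination of that algebra (Theorem 13.6.6 in \cite{RRGII}, together with Lemma 13.6.4) to see that it is the nonzero $\mu$--scalar times the identity, hence a unit; the same in the opposite order gives the two-sided inverse and confirms $A_k(P,\sigma,i\nu)$ is invertible onto $H_{[P],k\sigma}^{\infty}$. The main obstacle I expect is the holomorphy at regular imaginary points: establishing that the meromorphic continuation of $J_{Q|P}$ genuinely has no pole on $i(\mathfrak{a}_P^{\ast})'$ and extracting this cleanly from the scalar product formula, especially in the associate-but-not-opposite case where one must track the matching between the product of rank--one intertwining operators and the rank--one factorization of $\mu$. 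Once holomorphy is in hand, invertibility is immediate from $\mu\neq 0$.
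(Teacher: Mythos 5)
Your overall route coincides with the paper's: the paper proves this theorem in one line, by combining the preceding theorem ($\mu(\sigma,\nu)\neq0$ for $\nu\in\left(\mathfrak{a}_{P}^{\ast}\right)^{\prime}$, itself a citation of Lemma 13.6.4 and Theorem 10.5.7(iii) of \cite{RRGII}) with Harish-Chandra's determination of the intertwining algebra of $\pi_{P,\sigma,i\nu}$ (Theorem 13.6.6 of \cite{RRGII}). Your reduction modulo the unitary factor $L_{k}$, your appeal to the functional equation tying $J_{P|Q}J_{Q|P}$ to $\mu^{-1}$, and your final packaging via Theorem 13.6.6 and Lemma 13.6.4 are exactly the content of those citations, so in substance you are reconstructing the same proof.

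One intermediate claim, however, is not valid as stated: that a meromorphic operator-valued function whose composite with another is a finite, nonzero scalar multiple of the identity at $i\nu$ can have neither a pole nor a rank drop there. As pure meromorphic algebra this is false. Take
\[
J_{1}(\nu)=\begin{pmatrix}(\nu-\nu_{0})^{-1} & 0\\ 0 & 1\end{pmatrix},\qquad
J_{2}(\nu)=\begin{pmatrix}\nu-\nu_{0} & 0\\ 0 & 1\end{pmatrix};
\]
then $J_{1}(\nu)J_{2}(\nu)=J_{2}(\nu)J_{1}(\nu)=I$ identically, yet $J_{1}$ has a pole and $J_{2}$ has a rank drop at $\nu_{0}$: the polar parts of the two factors can cancel against each other inside the product. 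So the nonvanishing of $\mu$ together with the scalar functional equation cannot, by itself, exclude poles of $J_{Q|P}(\sigma,\cdot)$ on $i\left(\mathfrak{a}_{P}^{\ast}\right)^{\prime}$. What actually excludes them is the additional adjoint (Maass--Selberg) relation on the imaginary axis, $J_{P|Q}(\sigma,i\nu)^{\ast}=J_{Q|P}(\sigma,i\nu)$ up to the normalizations of \cite{RRGII}, which converts the scalar identity into $\left\Vert J_{Q|P}(\sigma,i\nu)v\right\Vert^{2}=\eta(\nu)\left\Vert v\right\Vert^{2}$ with $\eta$ bounded near the point; boundedness along the axis then kills every polar coefficient. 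You flagged holomorphy as the main obstacle and deferred it to the rank-one reduction and the $c$-function theory, and that is indeed where the adjoint relations live (they are part of what Theorem 13.6.6 and its supporting results encode), so your plan closes -- but be aware that it closes because of that input, not because of the scalar product formula alone.
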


If $\nu\in\left(  \mathfrak{a}_{P}^{\ast}\right)  ^{\prime}$ then use the
notation
\[
C_{k}(P,\sigma,i\nu):\mathcal{S}(H_{[P],\sigma})\rightarrow\mathcal{S}%
(H_{P,k\sigma})
\]
for the operator given by the formula
\[
C_{k}(P,\sigma,i\nu)(T)=A_{k}(P,\sigma,i\nu)TA_{k}(P,\sigma,i\nu)^{-1}%
\]
where defined (in particular for $\nu\in\left(  \mathfrak{a}_{P}^{\ast
}\right)  ^{\prime}$). If $H$ is a Hilbert space then we denote by $HS(H)$ the
Hilbert space of Hilbert-Schmidt operators on $H$ with the Hilbert-Schmidt
inner product.

\begin{lemma}
\label{extension}Let $HS(H_{[P],\sigma})$ denote the Hilbert-Schmidt operators
on $H_{[P].\sigma}$ with the Hilbert-Schmidt inner product. If $\nu\in\left(
\mathfrak{a}_{P}^{\ast}\right)  ^{\prime}$, then $C_{k}(P,\sigma,i\nu)$
extends to a unitary operator from $HS(H_{[P],\sigma})$ to $HS(H_{P,k\sigma})$.
\end{lemma}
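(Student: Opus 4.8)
The plan is to reduce the unitarity of $C_{k}(P,\sigma,i\nu)$ to the single statement that $A_{k}(P,\sigma,i\nu)$ is a nonzero scalar multiple of a unitary operator from $H_{[P],\sigma}$ onto $H_{[P],k\sigma}$. Writing $A=A_{k}(P,\sigma,i\nu)$ and $B=A^{\ast}A$, a direct computation with the Hilbert--Schmidt inner product gives $\left\langle ASA^{-1},ATA^{-1}\right\rangle _{HS}=\mathrm{tr}\!\left(BSB^{-1}T^{\ast}\right)$ for $S,T\in HS(H_{[P],\sigma})$, using $A^{-1}(A^{\ast})^{-1}=B^{-1}$ and the cyclic invariance of the trace. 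This equals $\left\langle S,T\right\rangle _{HS}=\mathrm{tr}(ST^{\ast})$ for all $S,T$ exactly when $B$ is scalar; and if $A=\lambda U$ with $U$ unitary and $\lambda\neq0$, then $C_{k}(T)=UTU^{\ast}$, whose inverse is conjugation by $U^{\ast}$, so $C_{k}$ is an onto isometry of Hilbert--Schmidt spaces, i.e. unitary. Thus everything comes down to showing $B=cI$ with $c>0$.

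First I would record that for real $\nu\in\mathfrak{a}_{P}^{\ast}$ both $\pi_{P,\sigma,i\nu}$ and $\pi_{P,k\sigma,ik\nu}$ are unitary representations of $G$ (unitarily induced from the square integrable $\sigma$), so $\pi_{P,\sigma,i\nu}(g)^{\ast}=\pi_{P,\sigma,i\nu}(g^{-1})$, and likewise for $k\sigma$. Since $A$ intertwines $\pi_{P,\sigma,i\nu}$ with $\pi_{P,k\sigma,ik\nu}$, taking adjoints in the intertwining identity shows that $A^{\ast}$ intertwines $\pi_{P,k\sigma,ik\nu}$ with $\pi_{P,\sigma,i\nu}$; hence $B=A^{\ast}A$ commutes with $\pi_{P,\sigma,i\nu}(G)$ and is a positive invertible element of the intertwining algebra. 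Boundedness of $A$ (needed for $A^{\ast}$ and the extension to make sense) I would take from Harish-Chandra's normalization invoked below, under which $A$ differs from a unitary operator by a scalar.

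The heart of the matter is to show that this positive operator $B$ is actually a scalar, and here the regularity hypothesis $\nu\in(\mathfrak{a}_{P}^{\ast})'$ is essential. I would not argue by Schur, since $\pi_{P,\sigma,i\nu}$ may be reducible (nontrivial $R$--group), so membership in the commutant does not force $B$ to be scalar. Instead, set $Q=k^{-1}Pk$, so that $A=L_{k}J_{Q|P}(\sigma,i\nu)$ with $L_{k}$ a unitary map $H_{[Q],\sigma}\to H_{[P],k\sigma}$. Then $L_{k}^{\ast}L_{k}=I$ gives $B=J_{Q|P}(\sigma,i\nu)^{\ast}J_{Q|P}(\sigma,i\nu)$, and since the formal $L^{2}$--adjoint of $J_{Q|P}(\sigma,i\nu)$ is $J_{P|Q}(\sigma,i\nu)$ (the sign in $\nu$ being harmless because $\nu$ is real), we obtain $B=J_{P|Q}(\sigma,i\nu)J_{Q|P}(\sigma,i\nu)$, a composition of opposite intertwining operators $P\to Q\to P$. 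By the Maass--Selberg relations / functional equations this product is a genuine scalar built from Harish-Chandra's $\mu$--function, nonzero precisely because $\mu(\sigma,\nu)\neq0$ on $(\mathfrak{a}_{P}^{\ast})'$ by the Theorem quoted above, and positive for real $\nu$. This yields $B=cI$ with $c>0$, and the reduction in the first paragraph finishes the argument. The main obstacle is exactly this last step: establishing, with the correct normalizing constants, that the composite of the two opposite intertwining operators is a positive scalar rather than merely a positive element of the (possibly nontrivial) intertwining algebra.
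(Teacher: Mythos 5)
Your proof is correct and is essentially the paper's own argument: both reduce the lemma to the single fact that $A_{k}(P,\sigma,i\nu)^{\ast}A_{k}(P,\sigma,i\nu)$ is the positive scalar $C/\mu(\sigma,i\nu)$ times the identity --- which the paper invokes as ``the definition of $\mu(\sigma,i\nu)$'' and you derive by writing it as $J_{P|Q}(\sigma,i\nu)J_{Q|P}(\sigma,i\nu)$ and citing the Maass--Selberg relations, the same fact --- after which conjugation by a scalar multiple of a unitary is a unitary map of Hilbert--Schmidt spaces. The only real difference is bookkeeping in the extension step: the paper works on the finite-dimensional spaces $H_{[P],\sigma}[F_{j}]$ and uses monotone convergence to pass to all of $HS(H_{[P],\sigma})$, whereas you obtain boundedness of $A_{k}(P,\sigma,i\nu)$ directly from the scalar identity on the dense domain where it is defined.
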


\begin{proof}
For each $j=1,2,...$let $F_{j}$ be a finite subset of $\hat{K}$ such that
$F_{j}\subset F_{j+1}$, $F_{j}=F_{j}^{\ast}$ and $\cup_{j=1}^{\infty}F_{j}$ is
the set of elements of $\hat{K}$ such that $H_{[P],\sigma}[\gamma]\neq0 $. Fix
$F\subset\hat{K}$ and let $L=E_{F}TE_{F|_{H_{[P],\sigma}[F]}}$ and
$B=A_{k}(P,\sigma,i\nu)_{|_{H_{[P],\sigma}[F]}}$. Then%
\[
\mathrm{tr}(BLB^{-1}(BLB^{-1})^{\ast})=\mathrm{tr}(BLB^{-1}(B^{-1})^{\ast
}T^{\ast}B^{\ast})=\mathrm{tr}(B^{\ast}BLB^{-1}(B^{-1})^{\ast}L^{\ast}).
\]
By the definition of $\mu(\sigma,i\nu)$ we have%
\[
B^{\ast}B=\frac{C}{\mu(\sigma,i\nu)}I
\]
with $C$ the normalization necessary for the Harish-Chandra Plancherel
theorem. Thus
\[
\mathrm{tr}E_{F_{j}}C_{k}(P,\sigma,i\nu)(T)E_{F_{j}}\left(  E_{F_{j}}%
C_{k}(P,\sigma,i\nu)(T)E_{F_{j}}\right)  ^{\ast})=\mathrm{tr}(E_{F_{j}%
}TE_{F_{j}}T^{\ast}E_{F_{j}}).
\]
Observing that
\[
\mathrm{tr}(E_{F_{j}}TE_{F_{j}}T^{\ast}E_{F_{j}})=\sum_{\gamma_{l}\in F_{j}%
}\left\langle E_{F_{j}}T^{\ast}v_{i},E_{F_{j}}T^{\ast}v_{i}\right\rangle
\]
we can apply monotone convergence to conclude that
\[
\lim_{j\rightarrow\infty}\mathrm{tr}E_{F_{j}}C_{k}(P,\sigma,i\nu)(T)E_{F_{j}%
}\left(  E_{F_{j}}C_{k}(P,\sigma,i\nu)(T)E_{F_{j}}\right)  ^{\ast
})=\mathrm{tr}(TT^{\ast}).
\]

\end{proof}

Note that if $k_{1},k_{2}\in N_{K}(A_{P})$ then there exists a scalar
$c(k_{1},k_{2})$ such that $A_{k_{1}}(P,k_{2}\sigma,ik_{2}\nu)A_{k_{2}%
}(P,\sigma,\nu)=c(k_{1},k_{2})A_{k_{1}k_{2}}(P,\sigma,i\nu)$ we have

\begin{lemma}
\label{cocycle}$C_{k}(P,\sigma,i\nu)$ satisfies the cocycle condition%
\[
C_{k_{1}k_{2}}(P,x\sigma,i\nu)=C_{k_{1}}(P,k_{2}x\sigma,ik_{2}\nu)C_{k_{2}%
}(P,x\sigma,\nu)
\]
form$k_{1},k_{2},x\in N_{K}(A_{P})$.
\end{lemma}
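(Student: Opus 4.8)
The plan is to reduce everything to the scalar cocycle relation for the intertwining operators $A_k$ recorded immediately before the lemma, and to exploit the fact that the scalar $c(k_1,k_2)$ is invisible to the conjugation defining $C_k$. Since
\[
C_k(P,\sigma,i\nu)(T)=A_k(P,\sigma,i\nu)\,T\,A_k(P,\sigma,i\nu)^{-1}
\]
is conjugation by $A_k$, any scalar multiple of $A_k$ induces the very same operator $C_k$; this cancellation is the entire mechanism of the proof, so I expect no genuine analytic difficulty.

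First I would rewrite the $A$-cocycle with $\sigma$ replaced by $x\sigma$. Using the definitions $k\sigma(m)=\sigma(k^{-1}mk)$ and $k\nu(h)=\nu(Ad(k)^{-1}h)$ one checks the compatibilities $k_2(x\sigma)=(k_2x)\sigma$ and $k_2(x\nu)=(k_2x)\nu$, so the relation reads
\[
A_{k_1}(P,k_2x\sigma,ik_2\nu)\,A_{k_2}(P,x\sigma,i\nu)=c(k_1,k_2)\,A_{k_1k_2}(P,x\sigma,i\nu),
\]
equivalently $A_{k_1k_2}(P,x\sigma,i\nu)=c(k_1,k_2)^{-1}A_{k_1}(P,k_2x\sigma,ik_2\nu)\,A_{k_2}(P,x\sigma,i\nu)$. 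Here $N_K(A_P)$ is a group, so $k_1k_2\in N_K(A_P)$, and since $\nu\in(\mathfrak{a}_P^\ast)'$ the theorem preceding the definition of $C_k$ guarantees each $A_k$ is invertible, so all inverses below exist.

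Next I would substitute this expression into the definition of $C_{k_1k_2}(P,x\sigma,i\nu)(T)$ and expand the conjugation. The inverse of the product contributes a factor $c(k_1,k_2)$ while the product itself contributes $c(k_1,k_2)^{-1}$; these cancel, leaving
\[
A_{k_1}(P,k_2x\sigma,ik_2\nu)\Big(A_{k_2}(P,x\sigma,i\nu)\,T\,A_{k_2}(P,x\sigma,i\nu)^{-1}\Big)A_{k_1}(P,k_2x\sigma,ik_2\nu)^{-1}.
\]
The inner bracket is precisely $C_{k_2}(P,x\sigma,i\nu)(T)$, and the outer conjugation is $C_{k_1}(P,k_2x\sigma,ik_2\nu)$ applied to it, which is the asserted identity. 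For well-definedness one checks that the spaces compose correctly: $C_{k_2}(P,x\sigma,i\nu)$ lands in $\mathcal{S}(H_{P,k_2x\sigma})$, which is exactly the source of $C_{k_1}(P,k_2x\sigma,ik_2\nu)$, and both sides carry $\mathcal{S}(H_{[P],x\sigma})$ to $\mathcal{S}(H_{P,k_1k_2x\sigma})$.

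The hard part, if any, is purely bookkeeping: keeping the twisted representation $k_2x\sigma$ and the twisted parameter $ik_2\nu$ consistent across the composition, which is why verifying $k_2(x\sigma)=(k_2x)\sigma$ and $k_2(x\nu)=(k_2x)\nu$ at the outset pays off. The mathematical content is entirely the disappearance of the scalar cocycle $c(k_1,k_2)$ under conjugation, turning the projective cocycle relation for the $A_k$ into an honest cocycle relation for the $C_k$.
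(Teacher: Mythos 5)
Your proof is correct and is exactly the argument the paper intends: the lemma is stated there as an immediate consequence of the projective cocycle relation $A_{k_{1}}(P,k_{2}\sigma,ik_{2}\nu)A_{k_{2}}(P,\sigma,i\nu)=c(k_{1},k_{2})A_{k_{1}k_{2}}(P,\sigma,i\nu)$ noted just before it, with the scalar $c(k_{1},k_{2})$ cancelling under the conjugation that defines $C_{k}$. Your write-up simply makes explicit the substitution $\sigma\mapsto x\sigma$, the cancellation, and the bookkeeping of source and target spaces, all of which match the paper's (implicit) proof.
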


Using the relationship between the intertwining operators and the
Harish-Chandra ${}^{o}C_{P|Q}$ operators (see (2) page 232 in \cite{RRGII}) we
see that Theorem 13.2.11 in \cite{RRGII} implies

\begin{theorem}
\label{maass-selberg}$(k,\nu)\mapsto C_{k}(P,\mu,i\nu)$ extends to a
continuous map from $N_{K}(A_{P})\times\mathfrak{a}_{P}^{\ast}$ to
$U(HS([P],\sigma))$ (the unitary operators on $HS([P],\sigma))$) that is real
analytic in $\nu$..
\end{theorem}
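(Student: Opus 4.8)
The plan is to reduce the assertion to Harish-Chandra's Maass--Selberg relations through the identification, recalled in the sentence preceding the statement, of $A_{k}(P,\sigma,i\nu)$ with a scalar multiple of the normalized operator built from the Harish-Chandra ${}^{o}C_{P|Q}$ operators. The essential observation is that $C_{k}(P,\sigma,i\nu)$ is a \emph{conjugation}, $T\mapsto A_{k}TA_{k}^{-1}$, and conjugation is unchanged when $A_{k}$ is multiplied by a nonzero scalar. Hence $C_{k}$ depends on $A_{k}$ only up to scalar, and all the analytic difficulties attached to the individual factor $A_{k}$ --- which a priori is defined only on $(\mathfrak{a}_{P}^{\ast})^{\prime}$ and may be singular on the complementary hyperplanes --- are purely scalar in nature and therefore cancel in $C_{k}$.

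First I would make the scalar normalization explicit. From the computation in the proof of Lemma \ref{extension} one has $A_{k}(P,\sigma,i\nu)^{\ast}A_{k}(P,\sigma,i\nu)=\frac{C}{\mu(\sigma,i\nu)}I$ on each finite-dimensional $K$-isotypic block, so the normalized operator $\widetilde{A}_{k}:=\sqrt{\mu(\sigma,i\nu)/C}\,A_{k}(P,\sigma,i\nu)$ is unitary from $H_{[P],\sigma}$ to $H_{[P],k\sigma}$ wherever defined, and $C_{k}(T)=\widetilde{A}_{k}T\widetilde{A}_{k}^{-1}$. Thus it suffices to prove that $(k,\nu)\mapsto\widetilde{A}_{k}$, equivalently the associated Harish-Chandra ${}^{o}C$-operator, extends to a jointly continuous, $\nu$-real-analytic family of unitaries over all of $\mathfrak{a}_{P}^{\ast}$ (using throughout the identification of $H_{[P],k\sigma}$ with $H_{[P],\sigma}$ built into the setup). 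This is exactly the content of Theorem 13.2.11 of \cite{RRGII} transported through the relation (2) on page 232 of \cite{RRGII}: the normalized $c$-operators have no poles on the imaginary axis and are real-analytic and unitary there.

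Second I would treat the dependence on $k$. Using the cocycle relation of Lemma \ref{cocycle} I would factor the $k$-dependence according to the fibration of $N_{K}(A_{P})$ over the finite Weyl group $N_{K}(A_{P})/C_{K}(A_{P})$ with fiber $C_{K}(A_{P})=M_{P}\cap K$. For $k\in C_{K}(A_{P})$ one has $k^{-1}Pk=P$, so $J_{k^{-1}Pk|P}(\sigma,\nu)=\mathrm{id}$ and $A_{k}=L_{k}$; continuity of $k\mapsto L_{k}$, hence of $C_{k}$, is immediate from strong continuity of the induced representation, and here $k\nu=\nu$ so no further $\nu$-dependence is introduced. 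For the finitely many Weyl representatives the normalized operators are the genuine ${}^{o}C$-operators, whose joint continuity and $\nu$-analyticity come again from Theorem 13.2.11; combining the pieces via the cocycle yields continuity on all of $N_{K}(A_{P})\times\mathfrak{a}_{P}^{\ast}$.

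The main obstacle is the extension across the singular hyperplanes $\mathfrak{a}_{P}^{\ast}\setminus(\mathfrak{a}_{P}^{\ast})^{\prime}$, where the unnormalized $A_{k}$ may be singular and $C_{k}$ is only defined a priori on the regular set. Everything rests on showing that these singularities are scalar and so invisible to the conjugation $C_{k}$; this is precisely why passing to the normalized ${}^{o}C_{P|Q}$ operators, which Harish-Chandra proved (Theorem 13.2.11) to be holomorphic and unitary on the whole imaginary axis, is the right move. Once the scalar cancellation is in hand, unitarity of the extended $C_{k}$ follows from Lemma \ref{extension} by continuity, and real-analyticity in $\nu$ is inherited directly from the normalized operators.
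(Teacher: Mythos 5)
Your proposal is correct and follows essentially the same route as the paper, whose entire proof is the remark preceding the statement: pass to the Harish-Chandra ${}^{o}C_{P|Q}$ operators via the relation (2) on page 232 of \cite{RRGII} and invoke Theorem 13.2.11 there. Your additional details --- the explicit scalar normalization $A_{k}^{\ast}A_{k}=\frac{C}{\mu(\sigma,i\nu)}I$ showing the singularities cancel in the conjugation $T\mapsto A_{k}TA_{k}^{-1}$, and the cocycle reduction of the $k$-dependence to Weyl representatives plus $M_{P}\cap K$ --- are exactly the unwinding the paper leaves implicit.
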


Set $N_{K}(A_{P})_{\sigma}=\{k\in N_{K}(A_{P})|k\sigma=\sigma\}$ (here we mean
$\sigma(k^{-1}xk)=\sigma(x)$ for all $x\in K$) and giving $N_{K}(A_{P})\sigma$
the manifold structure of $N_{K}(A_{P})/N_{K}(A_{P})_{\sigma}$ we have a
Hilbert vector bundle, $\mathbf{HS}([P],\sigma)$, over $N_{K}(A_{P}%
)\sigma\times\mathfrak{a}_{P}^{\ast}$ with fiber at $(k\sigma,\nu) $ given by
$HS(H_{\left[  P\right]  ,k\sigma})$. The above Lemma and Theorem imply that
we also have an $N_{K}(A_{P})$ action that is given as follows: If
$v\in\mathbf{HS}(H_{[P],\sigma})_{(k\sigma,i\nu)}$ and $x\in N_{K}(A)$ then
$xv=C_{x}(P,k\sigma,i\nu)v$. The fiberwise inner product on $\mathbf{HS}%
([P],\sigma)$ is given by the Hilbert-Schmidt inner product. This cocycle also
defines an $N_{K}(A_{P})$--Fr\'{e}chet bundle over $N_{K}(A_{P})\sigma
\times\mathfrak{a}_{P}^{\ast}$ with fiber $\mathcal{S}(HS([P],\sigma)$ which
we will denote \underline{$\mathcal{S}$}$(HS([P],\sigma)$.

\begin{lemma}
The $N_{K}(A_{P})$-invariant cross-sections, $\Gamma(\mathbf{HS}%
(H_{[P],\sigma}))^{N_{K}(A_{P})},,$ of $\mathbf{HS}(H_{[P],\sigma})$ are the
maps $\alpha:N_{K}(A_{P})\sigma\times\mathfrak{a}_{P}^{\ast}\rightarrow
\cup_{k\in N_{K}(A_{P})}HS([P],k\sigma)$ such that if $\mu\in N_{K}(A_{P})$
then $\alpha(\mu,\nu)\in HS([P],\mu),$ $\nu\mapsto\alpha(\mu,\nu)$ is
continuous and
\[
\alpha(k\mu,k\nu)=C_{k}(P,\mu,\nu)\alpha(\mu,\nu).
\]
$\Gamma(\underline{\mathcal{S}}(HS([P],\sigma))^{N_{K}(A_{P})}$ is the
subspace of $\alpha\in$ $\Gamma(\mathbf{HS}(H_{[P],\sigma}))^{^{N_{K}(A_{P})}%
}$ such that $\nu\mapsto\alpha(\sigma,\nu)$ is continuous with values in
\underline{$\mathcal{S}$}$(HS([P],\sigma)$.
\end{lemma}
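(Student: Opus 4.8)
The plan is to read the assertion as an unpacking of what it means for a cross-section of $\mathbf{HS}(H_{[P],\sigma})$ to be $N_K(A_P)$-invariant, using Lemma \ref{cocycle} to guarantee that the relevant recipe is a genuine group action and Theorem \ref{maass-selberg} for all continuity bookkeeping. Throughout I abbreviate $C_k(P,\mu,\nu)$ for the fiber map $C_k(P,\mu\sigma,i\nu)$ and write points of the base as $(\mu\sigma,\nu)\in N_K(A_P)\sigma\times\mathfrak{a}_P^\ast$, the fiber over such a point being $HS(H_{[P],\mu\sigma})$.

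First I would write out the action on sections. The $N_K(A_P)$-action on the total space covers $(\mu\sigma,\nu)\mapsto(x\mu\sigma,x\nu)$ and is $C_x(P,\mu\sigma,i\nu)$ on fibers, so for a section $\alpha$ one has $(x\cdot\alpha)(\mu\sigma,\nu)=C_x(P,x^{-1}\mu\sigma,ix^{-1}\nu)\,\alpha(x^{-1}\mu\sigma,x^{-1}\nu)$. Imposing $x\cdot\alpha=\alpha$ and substituting $\mu\sigma\mapsto x\mu\sigma$, $\nu\mapsto x\nu$ turns the invariance equation into exactly the displayed identity $\alpha(k\mu,k\nu)=C_k(P,\mu,\nu)\alpha(\mu,\nu)$. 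Hence invariant sections are precisely the equivariant maps, provided the recipe $x\cdot$ really is a left action; this is where Lemma \ref{cocycle} is used: evaluating the equivariance identity for a product $k_1k_2$ in two steps and comparing with the one-step version reproduces the cocycle relation, so the action is consistent. Taking $k$ in the stabilizer $N_K(A_P)_\sigma$ then shows the equivariance constraint is compatible with the quotient defining the orbit, so no hidden constraint is lost in passing to $N_K(A_P)\sigma$.

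The one place that needs genuine argument is continuity, since a cross-section must be jointly continuous in $(\mu\sigma,\nu)$ while the statement only requires continuity of $\nu\mapsto\alpha(\mu,\nu)$ on each slice. The converse implication is trivial; for the forward one I would fix $(\mu_0\sigma,\nu_0)$, choose a continuous local section $s$ of $N_K(A_P)\to N_K(A_P)\sigma$ near $\mu_0\sigma$, and use the equivariance identity to write $\alpha(\mu\sigma,\nu)=C_{s(\mu\sigma)}(P,\mu_0\sigma,s(\mu\sigma)^{-1}\nu)\,\alpha(\mu_0\sigma,s(\mu\sigma)^{-1}\nu)$. As $(\mu\sigma,\nu)\to(\mu_0\sigma,\nu_0)$ the joint continuity (and real analyticity in $\nu$) of the cocycle furnished by Theorem \ref{maass-selberg}, together with slice-continuity of $\alpha(\mu_0\sigma,\cdot)$, forces the right-hand side to converge to $\alpha(\mu_0\sigma,\nu_0)$. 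This yields joint continuity, so slice-continuity plus equivariance is equivalent to being a continuous cross-section. I expect this reduction to be the main, though modest, obstacle.

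Finally, for the Fr\'echet subbundle I would note that $\underline{\mathcal{S}}(HS([P],\sigma))$ was constructed so that the same cocycle $C_k$ restricts to topological isomorphisms of the Schwartz fibers $\mathcal{S}(HS([P],\cdot))$ (each $C_k$ is conjugation by the intertwiner $A_k$, which preserves the $C^\infty$-structure governing the seminorms $r_{l,m}$). Consequently an invariant $\mathbf{HS}$-section takes Schwartz values everywhere iff it does so at the single base point $\sigma$, by the equivariance identity; combined with the continuity reduction above this is exactly the requirement that $\nu\mapsto\alpha(\sigma,\nu)$ be continuous into $\underline{\mathcal{S}}(HS([P],\sigma))$, as stated.
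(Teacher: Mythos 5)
Your proposal is correct; what you should know for comparison is that the paper offers no proof of this lemma at all --- it is stated as an immediate unwinding of the definitions of $\mathbf{HS}(H_{[P],\sigma})$, of the action $xv=C_{x}(P,k\sigma,i\nu)v$, and of invariance of a cross-section, with Lemma \ref{cocycle} and Theorem \ref{maass-selberg} (established just before the bundle is introduced) serving as the ingredients that make this well defined. Your write-up is that unwinding carried out honestly, and your third paragraph supplies precisely the point the paper passes over in silence: a cross-section ought to be jointly continuous in $(\mu,\nu)$, whereas the lemma postulates only continuity of each slice $\nu\mapsto\alpha(\mu,\nu)$, so something must be said; your reduction via a local section of $N_{K}(A_{P})\rightarrow N_{K}(A_{P})\sigma$, using equivariance, the joint continuity of $(k,\nu)\mapsto C_{k}(P,\mu,i\nu)$, and the uniform bound coming from unitarity, is the right argument, and it is not vacuous, since with the paper's literal stabilizer condition ($k\sigma=\sigma$ as functions) the orbit $N_{K}(A_{P})\sigma$ need not be finite. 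One caveat on your final paragraph: the justification that $C_{k}$ restricts to isomorphisms of the Schwartz fibers because it is conjugation by $A_{k}(P,\sigma,i\nu)$ is literally valid only for $\nu\in\left(\mathfrak{a}_{P}^{\ast}\right)^{\prime}$, where $A_{k}$ is defined and invertible; at singular $\nu$ one needs the extended cocycle of Theorem \ref{maass-selberg} to act continuously on the Fr\'{e}chet fibers, which is exactly the paper's own (equally unproved) assertion that the cocycle defines the bundle $\underline{\mathcal{S}}(HS([P],\sigma))$. Since that assertion precedes the lemma, you are entitled to cite it, but your parenthetical argument as written covers only the regular set.
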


\begin{proposition}
If $k\in N_{K}(A_{P}),\nu\in\left(  \mathfrak{a}_{P}^{\ast}\right)  ^{\prime}$
and $f\in\mathcal{C}(G)$ then
\[
C_{k}(P,\sigma,i\nu)(\pi_{P,\sigma,i\nu}(g))=\pi_{P,k\sigma,ik\nu}(g)
\]
and%
\[
C_{k}(P,\sigma,i\nu)(\pi_{P,\sigma,i\nu}(f))=\pi_{P,k\sigma,ik\nu}(f).
\]

\end{proposition}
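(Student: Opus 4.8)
The plan is to read both equalities straight off the intertwining relation recorded in the discussion following Lemma \ref{Totality}, namely
\[
A_{k}(P,\sigma,\nu)\pi_{P,\sigma,\nu}(g)=\pi_{P,k\sigma,k\nu}(g)A_{k}(P,\sigma,\nu),
\]
specialized to the imaginary axis by replacing $\nu$ with $i\nu$ for $\nu\in\left(\mathfrak{a}_{P}^{\ast}\right)^{\prime}$. For such $\nu$ the operator $A_{k}(P,\sigma,i\nu)$ is defined and invertible from $H_{[P],\sigma}^{\infty}$ to $H_{[P],k\sigma}^{\infty}$ by the Theorem preceding the definition of $C_{k}$, so conjugation by it is legitimate on smooth vectors; moreover the computation $B^{\ast}B=\frac{C}{\mu(\sigma,i\nu)}I$ from the proof of Lemma \ref{extension} shows that $A_{k}(P,\sigma,i\nu)$ is a scalar multiple of a unitary, hence extends to a bounded invertible operator on the full Hilbert space.

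First I would dispose of the group-element identity, which is essentially formal. Unwinding the definition $C_{k}(P,\sigma,i\nu)(T)=A_{k}(P,\sigma,i\nu)\,T\,A_{k}(P,\sigma,i\nu)^{-1}$ at $T=\pi_{P,\sigma,i\nu}(g)$ and using $k(i\nu)=i(k\nu)$, the relation above reads $A_{k}(P,\sigma,i\nu)\pi_{P,\sigma,i\nu}(g)=\pi_{P,k\sigma,ik\nu}(g)A_{k}(P,\sigma,i\nu)$. Multiplying on the right by $A_{k}(P,\sigma,i\nu)^{-1}$ gives exactly $C_{k}(P,\sigma,i\nu)(\pi_{P,\sigma,i\nu}(g))=\pi_{P,k\sigma,ik\nu}(g)$.

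Next I would pass from $g$ to $f\in\mathcal{C}(G)$ using the Fr\'{e}chet-valued integral representation $\pi_{P,\sigma,i\nu}(f)v=\int_{G}f(g)\pi_{P,\sigma,i\nu}(g)v\,dg$, which takes values in $H_{[P],\sigma}^{\infty}$. Evaluating $C_{k}(P,\sigma,i\nu)(\pi_{P,\sigma,i\nu}(f))$ on a smooth vector $v\in H_{[P],k\sigma}^{\infty}$, I note that $A_{k}(P,\sigma,i\nu)^{-1}v\in H_{[P],\sigma}^{\infty}$, so that
\[
A_{k}(P,\sigma,i\nu)\,\pi_{P,\sigma,i\nu}(f)\,A_{k}(P,\sigma,i\nu)^{-1}v
=A_{k}(P,\sigma,i\nu)\int_{G}f(g)\pi_{P,\sigma,i\nu}(g)A_{k}(P,\sigma,i\nu)^{-1}v\,dg.
\]
Pulling $A_{k}(P,\sigma,i\nu)$ inside the integral and invoking the group-element identity pointwise converts the integrand into $f(g)\pi_{P,k\sigma,ik\nu}(g)v$, whose integral is $\pi_{P,k\sigma,ik\nu}(f)v$.

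The one point requiring care, and the main obstacle, is the interchange of $A_{k}(P,\sigma,i\nu)$ with the vector-valued integral: a priori $A_{k}$ is only an operator between the Fr\'{e}chet spaces of smooth vectors. I would justify it by the continuity of $A_{k}(P,\sigma,i\nu):H_{[P],\sigma}^{\infty}\rightarrow H_{[P],k\sigma}^{\infty}$ (equivalently its boundedness on the completion), which allows commuting a continuous linear map past a convergent Fr\'{e}chet integral. Finally, since $\pi_{P,\sigma,i\nu}(f)$ lies in $\mathcal{S}(H_{[P],\sigma})$ and is in particular Hilbert--Schmidt, both sides of the asserted identity are Hilbert--Schmidt operators agreeing on the dense set of smooth vectors; this identifies the value computed above with the image of $\pi_{P,\sigma,i\nu}(f)$ under the unitary extension of $C_{k}(P,\sigma,i\nu)$ furnished by Lemma \ref{extension}, completing the argument.
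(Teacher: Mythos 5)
Your proof is correct, but it reaches the second identity by a different mechanism than the paper. The paper handles the first identity exactly as you do (it is a restatement of the intertwining property), but for the second it simply invokes the limiting argument from the end of the proof of Lemma \ref{extension}: one truncates $\pi_{P,\sigma,i\nu}(f)$ to the finite $K$-type blocks $E_{F_{j}}\pi_{P,\sigma,i\nu}(f)E_{F_{j}}$, where conjugation by $A_{k}(P,\sigma,i\nu)$ is unproblematic because $K$-finite vectors are smooth, and then passes to the limit in Hilbert--Schmidt norm using the unitarity of $C_{k}(P,\sigma,i\nu)$ established there. You instead work directly with the Fr\'{e}chet (weak) integral $\pi_{P,\sigma,i\nu}(f)v=\int_{G}f(g)\pi_{P,\sigma,i\nu}(g)v\,dg$ on smooth vectors, pull $A_{k}(P,\sigma,i\nu)$ through the integral, apply the group-element identity pointwise, and finish by density. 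What your route buys: it makes explicit that for $\nu\in\left(\mathfrak{a}_{P}^{\ast}\right)^{\prime}$ the relation $B^{\ast}B=\frac{C}{\mu(\sigma,i\nu)}I$ (legitimate since $\mu(\sigma,i\nu)\neq0$ there) forces $A_{k}(P,\sigma,i\nu)$ to extend to a scalar multiple of a unitary, so that $C_{k}(P,\sigma,i\nu)$ is honest conjugation by a bounded invertible operator and no truncation is needed; the interchange of a continuous linear map with a Gelfand--Pettis integral is standard, and your closing identification of this conjugation with the unitary extension furnished by Lemma \ref{extension} is immediate since both are HS-continuous and agree on finite-rank $K$-finite operators. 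What the paper's route buys: it never needs the integral representation or the interchange step at all, only HS-limits of finite-rank truncations, reusing machinery already in place. Both arguments ultimately rest on the same pointwise intertwining relation, so the difference is one of packaging rather than substance, but your packaging is the more self-contained of the two.
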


\begin{proof}
The first equation is just a restatement of the intertwining property. The
second can be proved using the limiting argument in the end of the proof of
the preceding proposition.
\end{proof}

The next theorem is a compendium of results related to Harish-Chandra's
Plancherel Theorem: \cite{HCIII} Theorem 26.1 (c.f. \cite{RRGII} Theorem
12.7.1, Theorem 13.4.1)

\begin{theorem}
Let $F\subset\hat{K}$ be a finite set and let $\alpha:\mathfrak{a}_{P}^{\ast
}\rightarrow\mathrm{End}(H_{[P],\sigma}[F])$ be a Schwartz function then the
function $F_{\alpha}$ defined by%
\[
T(\alpha)(g)=\int_{\mathfrak{a}_{P}^{\ast}}\mathrm{tr}(\pi_{P,\sigma,i\nu
}(g^{-1})\alpha(\nu))\mu(\sigma,i\nu)d\nu
\]
is an element of $\mathcal{C}(G)$. Furthermore,
\[
\int_{G}\left\langle T(\alpha)(g),T(\alpha)(g)\right\rangle dg=\int
_{\mathfrak{a}_{P}^{\ast}}\mathrm{tr}(\alpha(\nu)\alpha(\nu)^{\ast})\mu
(\sigma,i\nu)d\nu.
\]

\end{theorem}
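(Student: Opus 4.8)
The plan is to treat the two assertions separately: first that $T(\alpha)\in\mathcal{C}(G)$, then the Plancherel isometry. Membership in the Schwartz space is an analytic estimate built directly on the matrix-coefficient bounds already proved, whereas the isometry is where the genuine content of Harish-Chandra's theory must enter. For the first assertion, fix the orthonormal basis $\{v_i\}$ adapted to the $K$-types, so that $\mathrm{tr}(\pi_{P,\sigma,i\nu}(g^{-1})\alpha(\nu))$ is a finite sum $\sum \langle \pi_{P,\sigma,i\nu}(g^{-1})\alpha(\nu)v_i, v_j\rangle$ over indices with $\gamma_i,\gamma_j\in F$. By Proposition \ref{tempered-CK} each term is bounded by $C_\sigma(1+\lambda_{\gamma_i})^{r_\sigma}(1+\lambda_{\gamma_j})^{r_\sigma}\|\alpha(\nu)\|\,\Xi(g)$, and since $F$ is finite the $K$-type factors are bounded, giving $|\mathrm{tr}(\pi_{P,\sigma,i\nu}(g^{-1})\alpha(\nu))|\le C\|\alpha(\nu)\|\Xi(g)$ uniformly in $\nu\in\mathfrak{a}_P^\ast$. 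Because $\alpha$ is Schwartz and $\mu(\sigma,i\nu)$ grows at most polynomially, the $\mathfrak{a}_P^\ast$-integral converges and yields $|T(\alpha)(g)|\le C'\Xi(g)$. To handle $L_x R_y$ I would move the derivatives under the integral: differentiating $\pi_{P,\sigma,i\nu}(g^{-1})$ produces $d\pi_{P,\sigma,i\nu}$ of elements of $U(\mathfrak{g})$, which carry the $F$-isotypic vectors into a slightly larger finite set $F'$ of $K$-types and depend polynomially on $\nu$; absorbing that polynomial into the Schwartz decay of $\alpha$ gives again a bound $|L_x R_y T(\alpha)(g)|\le C_{x,y}\Xi(g)$.

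The remaining ingredient is the rapid logarithmic decay, the factor $(1+\log\|g\|)^{-k}$ for every $k$. Here I would exploit that the $\nu$-dependence of $\pi_{P,\sigma,i\nu}(g)$ is, in the $A_P$-direction, an oscillatory factor of the form $a^{i\nu}$, so that $\nu$ is Fourier-dual to the $A_P$-distance of $g$. Applying a constant-coefficient differential operator in $\nu$ to $\pi_{P,\sigma,i\nu}(g^{-1})$ brings down polynomial factors in that distance; integrating by parts in $\nu$ transfers these derivatives onto $\alpha(\nu)\mu(\sigma,i\nu)$, which stays Schwartz because $\mu(\sigma,i\nu)$ is smooth in $\nu$. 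This gains an arbitrarily large negative power of the $A_P$-distance, dominating $(1+\log\|g\|)$ where $g$ is large in the split direction, while the $\Xi$-estimate already controls the $^oG$-direction. Combining the two gives all seminorms $p_{x,y,k}(T(\alpha))<\infty$ and the continuity of $\alpha\mapsto T(\alpha)$ into $\mathcal{C}(G)$.

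For the isometry I would expand $\int_G \langle T(\alpha)(g),T(\alpha)(g)\rangle dg$ into a double integral over $\nu,\nu'$ of the $G$-integral $\int_G \mathrm{tr}(\pi_{P,\sigma,i\nu}(g^{-1})\alpha(\nu))\overline{\mathrm{tr}(\pi_{P,\sigma,i\nu'}(g^{-1})\alpha(\nu'))}\,dg$, the interchange being justified by the Schwartz estimates above together with Proposition \ref{Weyl-ests}. The inner integral is a Schur-type orthogonality pairing of matrix coefficients of $\pi_{P,\sigma,i\nu}$ and $\pi_{P,\sigma,i\nu'}$. The orthogonality relations for the tempered $(P,\sigma)$-series—encoded through the Maass--Selberg relations and the operators $C_k(P,\sigma,i\nu)$ of Theorem \ref{maass-selberg}—collapse this double integral onto the diagonal $\nu'=\nu$, symmetrized over the finite group $N_K(A_P)\sigma$ via the cocycle of Lemma \ref{cocycle}, and produce the weight $\mu(\sigma,i\nu)$. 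The normalization constants built into $\mu(\sigma,i\nu)$ then make the right-hand side exactly $\int_{\mathfrak{a}_P^\ast}\mathrm{tr}(\alpha(\nu)\alpha(\nu)^\ast)\mu(\sigma,i\nu)\,d\nu$.

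The hard part is the isometry, not the Schwartz estimate. Concretely, the substantive obstacle is the orthogonality relations for the continuous family $\{\pi_{P,\sigma,i\nu}\}$: the $G$-integral of a product of matrix coefficients does not converge absolutely, since the coefficients are only $\Xi$-bounded, so it must be read as a $\mu$-weighted delta along the diagonal, and establishing this requires the full strength of the referenced Eisenstein-integral orthogonality together with the invertibility and unitarity of the intertwining operators $A_k(P,\sigma,i\nu)$. A secondary subtlety is the $N_K(A_P)$-symmetrization: $T(\alpha)$ depends only on the Weyl-symmetrization of $\alpha$, so one must check, using Lemma \ref{extension} and the cocycle relation, that the $\mu$-normalization absorbs the order of the relevant stabilizer, so that the identity holds as stated with $\alpha$ itself appearing on the right.
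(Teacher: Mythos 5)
Your proposal has genuine gaps in both halves, and it is worth seeing how the paper itself handles this theorem: it is stated explicitly as a compendium of Harish-Chandra's results, and its proof is a citation plus one trick. The first assertion \emph{is} \cite{RRGII} Theorem 12.7.1 (the wave packet theorem), and the second is deduced from \cite{RRGII} Theorem 13.4.1 by applying that pointwise inversion result to $\varphi\ast\check{\varphi}$ and evaluating at the identity, using $\varphi\ast\check{\varphi}(e)=\left\langle \varphi,\varphi\right\rangle$ and $\pi_{P,\sigma,i\nu}(\varphi\ast\check{\varphi})=\pi_{P,\sigma,i\nu}(\varphi)\pi_{P,\sigma,i\nu}(\varphi)^{\ast}$. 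No orthogonality-of-matrix-coefficients argument is ever made.

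The gap in your first half: the claim that the $\nu$-dependence of $\pi_{P,\sigma,i\nu}(g^{-1})$ is an oscillatory factor $a^{i\nu}$ whose phase is comparable to the $A_{P}$-distance of $g$ is false. In the compact picture the matrix coefficients are integrals over $K$ of phases $e^{i\nu(H(kg))}$, and as $k$ varies the Iwasawa projection $H(kg)$ sweeps out a region whose closure contains points of small norm even when $g$ is very large (the convex hull of a Weyl orbit contains $0$), so integration by parts in $\nu$ gains nothing near these stationary points. Controlling exactly those contributions is the content of Harish-Chandra's constant term and asymptotic expansion theory, i.e. the whole of the cited chapter; your two-sentence argument cannot replace it. The gap in your second half is one you half-concede but do not close: since $\int_{G}\Xi(g)^{2}(1+\log\left\Vert g\right\Vert )^{k}dg=\infty$, the double integral over $(\nu,\nu^{\prime})$ with inner $G$-integral is not defined and the Fubini interchange is illegitimate, so "collapsing onto the diagonal" is not a step but a restatement of the theorem. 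The ingredients you point to in the paper cannot supply it: Theorem \ref{maass-selberg} gives only unitarity of the operators $C_{k}(P,\sigma,i\nu)$ on Hilbert--Schmidt spaces, and Lemma \ref{extension} and Lemma \ref{cocycle} are likewise formal; a genuine proof along your lines would need truncation and the Maass--Selberg relations in their full Eisenstein-integral form, which is precisely the machinery hidden inside the cited Theorem 13.4.1. The convolution trick is the device that lets the paper bypass your divergent double integral entirely, and your proposal is missing it.
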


\begin{proof}
The first formula is \cite{RRGII} Theorem 12.7.1 and the second follows from
Theorem 13.4.1 using the fact that
\[
\varphi\ast\check{\varphi}(e)=\left\langle \varphi,\varphi\right\rangle .
\]
Here, as usual, $\check{\varphi}(x)=\overline{\varphi(x^{-1})}$ and $e$ is the
identity element of $G$.
\end{proof}

We are now ready to set up what we call the vector bundle version of the
abstract Plancherel Theorem for real reductive groups. Let $\mathcal{H}%
\mathbf{(}P,\sigma)$ be the space of all $\alpha\in\Gamma(\underline
{\mathcal{S}}(HS([P],\sigma))^{N_{K}(A_{P})}$ such that%
\[
\nu\mapsto\alpha(k\sigma,\nu)
\]
is Shwartz class, with values in $HS(H_{[P],k\sigma})$, and if $k\in
N_{K}(A_{P})$ then%
\[
\alpha(k\sigma,k\nu)=C_{k}(P,\sigma,\nu)(\alpha(\sigma,\nu))\text{.}%
\]
By the above, if $f\in\mathcal{C}(G)$ is $K$--finite then $((\mu,\nu
)\mapsto\pi_{P,\mu,i\nu}(f))\in\mathcal{H}\mathbf{(}P,\sigma)$.

Also for each $\nu\in\left(  \mathfrak{a}_{P}^{\ast}\right)  _{\mathbb{C}}$ we
look upon $HS(H_{[P],\sigma})$ as a representation of $G\times G$ with action%
\[
\Pi_{P,\sigma,\nu}(g,h)(T)=\pi_{P,\mu,\nu}(g)T\pi_{P,\mu,\nu}(h)^{-1},
\]
which is unitary if $\nu\in i\mathfrak{a}_{P}^{\ast}$. \cite{CWParam}
Proposition A.2 implies that $\mathcal{S}(H_{[P],\sigma})=HS(H_{[P],\sigma
})^{\infty}$ relative to the action of $K\times K$.

\begin{corollary}
Let $F_{j}$ be as in the proof of Lemma \ref{extension} and let $\alpha
\in\mathcal{H}\mathbf{(}P,\sigma)$. Then
\[
T(\alpha)=\lim_{j\rightarrow\infty}T(E_{F_{j}}\alpha E_{F_{j}})
\]
exists in $L^{2}(G)$ and
\[
\left\Vert T(\alpha)\right\Vert ^{2}=\int_{\mathfrak{a}_{P}^{\ast}}%
\mathrm{tr}(\alpha(\nu)\alpha(\nu)^{\ast})\mu(\sigma,i\nu)d\nu\text{.}%
\]

\end{corollary}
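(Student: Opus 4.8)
The plan is to lean entirely on the theorem immediately preceding this corollary, which, for any finite set $F\subset\hat{K}$ and any Schwartz family $\beta:\mathfrak{a}_{P}^{\ast}\to\mathrm{End}(H_{[P],\sigma}[F])$, produces $T(\beta)\in\mathcal{C}(G)$ and furnishes the exact isometry
\[
\left\Vert T(\beta)\right\Vert^{2}=\int_{\mathfrak{a}_{P}^{\ast}}\mathrm{tr}(\beta(\nu)\beta(\nu)^{\ast})\mu(\sigma,i\nu)\,d\nu .
\]
For $\alpha\in\mathcal{H}(P,\sigma)$ and $F_{j}$ as in Lemma \ref{extension}, each truncation $E_{F_{j}}\alpha(\sigma,\cdot)E_{F_{j}}$ is a Schwartz function with values in the finite-dimensional space $\mathrm{End}(H_{[P],\sigma}[F_{j}])$, so the theorem applies verbatim to it. The whole statement will then follow by controlling the truncations inside the Hilbert space $\mathcal{L}^{2}=L^{2}(\mathfrak{a}_{P}^{\ast},HS(H_{[P],\sigma}),\mu(\sigma,i\nu)\,d\nu)$, on which, by the displayed identity, $T$ restricted to finite-rank Schwartz families is norm-preserving into $L^{2}(G)$.

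First I would verify $\alpha(\sigma,\cdot)\in\mathcal{L}^{2}$. This is exactly where the Schwartz hypothesis built into $\mathcal{H}(P,\sigma)$ enters: $\Vert\alpha(\nu)\Vert_{HS}$ decays faster than any inverse power of $(1+\Vert\nu\Vert)$, while $\mu(\sigma,i\nu)$ is polynomially bounded on $\mathfrak{a}_{P}^{\ast}$, so $\int\Vert\alpha(\nu)\Vert_{HS}^{2}\mu(\sigma,i\nu)\,d\nu<\infty$. Next, for each fixed $\nu$ the operator $\alpha(\nu)$ is Hilbert-Schmidt, and since the $F_{j}$ increase to the full support and $E_{F_{j}}\to I$ strongly, writing matrix entries in the orthonormal basis $\{v_{i}\}$ shows that $\Vert\alpha(\nu)-E_{F_{j}}\alpha(\nu)E_{F_{j}}\Vert_{HS}^{2}$ is the tail of the convergent series $\sum_{i,l}|\langle\alpha(\nu)v_{i},v_{l}\rangle|^{2}$, hence tends to $0$. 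As this difference is bounded by $\Vert\alpha(\nu)\Vert_{HS}^{2}$ uniformly in $j$, dominated convergence yields $E_{F_{j}}\alpha E_{F_{j}}\to\alpha$ in $\mathcal{L}^{2}$.

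With these two facts in hand the conclusion is immediate. Applying the isometry to the differences $E_{F_{j}}\alpha E_{F_{j}}-E_{F_{k}}\alpha E_{F_{k}}$, which for $j\ge k$ are again Schwartz functions into $\mathrm{End}(H_{[P],\sigma}[F_{j}])$, and using the linearity of $T$, one gets
\[
\left\Vert T(E_{F_{j}}\alpha E_{F_{j}})-T(E_{F_{k}}\alpha E_{F_{k}})\right\Vert_{L^{2}(G)}=\left\Vert E_{F_{j}}\alpha E_{F_{j}}-E_{F_{k}}\alpha E_{F_{k}}\right\Vert_{\mathcal{L}^{2}}\to0,
\]
so $\{T(E_{F_{j}}\alpha E_{F_{j}})\}$ is Cauchy in $L^{2}(G)$ and its limit $T(\alpha)$ exists there. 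For the norm formula I would pass to the limit on both sides of the isometry for $\beta=E_{F_{j}}\alpha E_{F_{j}}$: the left-hand side converges to $\Vert T(\alpha)\Vert^{2}$ by continuity of the norm, while $\int\Vert E_{F_{j}}\alpha(\nu)E_{F_{j}}\Vert_{HS}^{2}\mu\,d\nu$ increases monotonically to $\int\mathrm{tr}(\alpha(\nu)\alpha(\nu)^{\ast})\mu\,d\nu$ by monotone convergence.

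The single non-formal ingredient is the polynomial bound on Harish-Chandra's $\mu$-function, needed to place $\alpha$ in $\mathcal{L}^{2}$; this is a standard property of $\mu(\sigma,i\nu)$. Everything else is soft: truncation of Hilbert-Schmidt operators together with dominated and monotone convergence, layered on top of the finite-rank isometry already established in the preceding theorem. I do not expect any real obstacle beyond making sure the truncations stay within the finite-rank Schwartz class to which that theorem literally applies.
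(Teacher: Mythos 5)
Your proposal is correct and follows essentially the same route as the paper: truncate $\alpha$ by the $K$-type projections $E_{F_j}$, apply the preceding theorem's isometry to the finite-rank Schwartz pieces, and control the tails using the Schwartz decay of $\Vert\alpha(\nu)\Vert_{HS}$ together with the polynomial bound on $\mu(\sigma,i\nu)$. Your organization via the auxiliary space $\mathcal{L}^{2}$ with dominated convergence is a slightly cleaner packaging of the paper's uniform tail estimate (and incidentally sidesteps the paper's imprecise identification of $E_{F_i}\alpha E_{F_i}-E_{F_j}\alpha E_{F_j}$ with $E_{F_i-F_j}\alpha E_{F_i-F_j}$), but the key lemma and the estimates invoked are the same.
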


\begin{proof}
If $i>j$ then
\[
\left\Vert T(E_{F_{i}}\alpha E_{F_{i}})-T(E_{F_{j}}\alpha E_{F_{j}%
})\right\Vert =\left\Vert T(\left(  E_{F_{i}-F_{j}}\right)  \alpha
E_{F_{i}-F_{j}})\right\Vert .
\]
Thus%
\[
\left\Vert T(E_{F_{i}}\alpha E_{F_{i}})-T(E_{F_{j}}\alpha E_{F_{j}%
})\right\Vert ^{2}=\sum_{\gamma\in F_{i}-F_{j}}\left\Vert T(E_{\eta}\alpha
E_{\gamma})\right\Vert ^{2}.
\]
Note that%
\[
\left\Vert T(E_{\eta}\alpha E_{\gamma})\right\Vert ^{2}=\int_{\mathfrak{a}%
_{P}^{\ast}}\left\Vert E_{\gamma}\alpha(\nu)E_{\gamma}\right\Vert _{HS}^{2}%
\mu(\sigma,i\nu)d\nu.
\]
Given $\varepsilon>0$ there exists $n$ such that if $\gamma\in\hat{K}-F_{n}$
then $\left\Vert E_{\gamma}\alpha(\nu)E_{\gamma}\right\Vert _{HS}^{2}%
\leq\varepsilon\left\Vert \alpha(v)\right\Vert _{HS}^{2}.$ Thus, if $j\geq n$
then%
\[
\left\Vert T(E_{F_{i}}\alpha E_{F_{i}})-T(E_{F_{j}}\alpha E_{F_{j}%
})\right\Vert ^{2}\leq\varepsilon\int_{\mathfrak{a}_{P}^{\ast}}\left\Vert
\alpha(\nu)\right\Vert _{HS}^{2}\mu(\sigma,i\nu)d\nu.
\]
Now, by definition of the Schwartz space, we have for each $d$
\[
\left\Vert \alpha(\nu)\right\Vert _{HS}^{2}\leq C_{d}(1+\left\Vert
\nu\right\Vert )^{-d}%
\]
and since
\[
\mu(\sigma,i\nu)\leq L(1+\left\Vert \nu\right\Vert )^{r}%
\]
for some $r.$ If we take $d$ sufficiently large and if $j\geq n$ then%
\[
\left\Vert T(E_{F_{i}}\alpha E_{F_{i}})-T(E_{F_{j}}\alpha E_{F_{j}%
})\right\Vert ^{2}\leq\varepsilon C_{d}L\int_{\mathfrak{a}_{P}^{\ast}%
}(1+\left\Vert \nu\right\Vert )^{r-d}.
\]
So the sequence $\left\{  T(E_{F_{i}}\alpha E_{F_{i}})\right\}  $. is Cauchy.
The above argument also proves the second formula.
\end{proof}

On $N_{K}(A_{P})\sigma\times\mathfrak{a}_{P}^{\ast}$ we use the product of the
invariant normalized measure on $N_{K}(A_{P})/N_{K}(A_{p})_{\sigma}$ and
Lebesgue measure on $\mathfrak{a}_{P}^{\ast}$. If $f\in\mathcal{C}(G),\nu
\in\mathfrak{a}_{P}^{\ast}$ set
\[
S(f)(\mu,\nu)=S_{P,\sigma}(f)(\mu,\nu)=\pi_{P\mu,i\nu}(f)
\]
for $\mu\in N_{K}(A_{P})\sigma$.

\begin{theorem}
\label{density}Denote by $L^{2}(G)_{[P],[\sigma]}$ the closure of
$T(\mathcal{H}\mathbf{(}P,\sigma))$ in $L^{2}(G).$ Let $\mathcal{C}%
(G)_{[P],\sigma}$ be the closure of the span of the functions $T(\alpha)$ for
$\alpha$ a Schartz function with values in $\cup_{j}End(H_{[P],\sigma}%
[F_{j}])$ in $\mathcal{C}(G)$. Then $S(\mathcal{C}(G)_{[P],\sigma})$ is dense
in the space of $N_{K}(A_{P})$ invariant $L^{2}$ cross-sections of
$\mathbf{HS}([P],\sigma)$.
\end{theorem}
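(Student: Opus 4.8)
The plan is to show that the composite $S\circ T$ is the identity on a dense subspace of sections, so that every such section automatically lies in $S(\mathcal{C}(G)_{[P],\sigma})$. Equip the space of $N_{K}(A_{P})$-invariant sections with the Plancherel inner product $\langle\alpha,\beta\rangle=\int_{\mathfrak{a}_{P}^{\ast}}\mathrm{tr}(\alpha(\sigma,\nu)\beta(\sigma,\nu)^{\ast})\mu(\sigma,i\nu)\,d\nu$; by unitarity of the $C_{k}$ (Theorem \ref{maass-selberg}) the integrand is $N_{K}(A_{P})$-invariant, so this agrees with integrating the fiberwise Hilbert--Schmidt product over the orbit $N_{K}(A_{P})\sigma$. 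The preceding Corollary, obtained by polarizing the Plancherel norm identity stated just above it, then gives the isometry
\[
\langle T(\alpha),T(\beta)\rangle_{L^{2}(G)}=\langle\alpha,\beta\rangle
\]
for finite $K$-type Schwartz sections $\alpha,\beta$, so $T$ extends to an isometry of the invariant $L^{2}$-sections onto $L^{2}(G)_{[P],[\sigma]}$.

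The key is a dual identity exhibiting $S$ as the adjoint of $T$. For $f\in\mathcal{C}(G)$ one has $\pi_{P,\sigma,i\nu}(f)\in\mathcal{S}(H_{[P],\sigma})$ by the discussion preceding Lemma \ref{Totality}, so $S(f)$ is a pointwise-defined section. First I would compute, interchanging the $G$- and $\mathfrak{a}_{P}^{\ast}$-integrations,
\[
\langle T(\alpha),f\rangle_{L^{2}(G)}=\int_{\mathfrak{a}_{P}^{\ast}}\mathrm{tr}\big(\alpha(\nu)\,\pi_{P,\sigma,i\nu}(f)^{\ast}\big)\mu(\sigma,i\nu)\,d\nu=\langle\alpha,S(f)\rangle,
\]
where the middle step uses that for $\nu\in\mathfrak{a}_{P}^{\ast}$ the representation $\pi_{P,\sigma,i\nu}$ is unitary, whence $\int_{G}\pi_{P,\sigma,i\nu}(g^{-1})\overline{f(g)}\,dg=\pi_{P,\sigma,i\nu}(f)^{\ast}$. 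Taking $f=T(\beta)\in\mathcal{C}(G)_{[P],\sigma}$ and comparing with the isometry identity yields $\langle\alpha,S(T(\beta))\rangle=\langle\alpha,\beta\rangle$ for all finite $K$-type $\alpha$; since those are dense, $S(T(\beta))=\beta$ as sections (a pointwise identity, both sides being continuous in $\nu$).

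It remains to observe that the finite $K$-type Schwartz sections $\beta$ are dense in the invariant $L^{2}$-sections: one truncates the $K$-types by $E_{F_{j}}\beta E_{F_{j}}$ exactly as in Lemma \ref{extension} and the preceding Corollary, approximates in $\nu$, and averages over the finite stabilizer $N_{K}(A_{P})_{\sigma}$ to retain the equivariance imposed by the cocycle of Lemma \ref{cocycle}. Since $T(\beta)\in\mathcal{C}(G)_{[P],\sigma}$ and $\beta=S(T(\beta))$, each such $\beta$ lies in $S(\mathcal{C}(G)_{[P],\sigma})$, and density follows. I expect the main obstacle to be the adjoint computation in the second paragraph: justifying the Fubini interchange and the identity $\int_{G}\pi_{P,\sigma,i\nu}(g^{-1})\overline{f(g)}\,dg=\pi_{P,\sigma,i\nu}(f)^{\ast}$ requires the absolute convergence furnished by the Harish-Chandra Schwartz estimates together with the $\Xi$-bounds of Propositions \ref{tempered-CK} and \ref{Weyl-ests}, and it is precisely here that the inversion content of the Plancherel theorem is used. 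The choice of base measure (bare Lebesgue versus the $\mu$-weighted Plancherel measure) is harmless, since $\mu(\sigma,i\nu)>0$ for regular $\nu$ and the identity $S(T(\beta))=\beta$ is pointwise in $\nu$.
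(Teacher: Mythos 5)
Your proof is correct in substance, but it follows a genuinely different route from the paper's. The paper disposes of the theorem in one line: it cites Theorem 13.3.2 of \cite{RRGII} (a restatement of Harish-Chandra's Theorem 26.1 in \cite{HCIII}), which asserts the pointwise wave-packet inversion $S(T(\alpha))(\mu,\nu)=\alpha(\mu,\nu)$ for $\alpha\in\mathcal{H}(P,\sigma)$, and then invokes density of such $\alpha$ among the invariant $L^{2}$ cross-sections. You instead manufacture the inversion from material already stated in this paper: the compendium Theorem and its Corollary make $T$ an isometry, your Fubini computation exhibits $S$ as the adjoint of $T$ (using unitarity of $\pi_{P,\sigma,i\nu}$ for real $\nu$), and then $S\circ T=T^{\ast}T=\mathrm{id}$ holds weakly against the finite $K$-type Schwartz sections, hence a.e., hence pointwise by continuity in $\nu$; the same soft density of finite $K$-type Schwartz invariant sections finishes both arguments. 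What your route buys is self-containedness relative to what the paper actually quotes: you need only the isometry, the estimates of Propositions \ref{tempered-CK} and \ref{Weyl-ests} to justify the interchange of integrals, and elementary Hilbert-space reasoning. What it costs: (i) the deep input is relocated, not avoided --- in \cite{RRGII} the isometry (Theorem 13.4.1) is itself established downstream of the inversion (Theorem 13.3.2), so your argument repackages rather than replaces Harish-Chandra's theorem, and your closing remark that the Fubini step is ``where the inversion content is used'' misplaces the depth, which sits entirely in the quoted isometry and the fact that wave packets are Schwartz; (ii) two small inaccuracies: the stabilizer $N_{K}(A_{P})_{\sigma}$ need not be finite, only compact (compactness is all the averaging argument requires, since the cocycle $C_{k}$ is unitary and preserves $K$-isotypic blocks), and the paper's $\Gamma^{2}$ inner product carries a factor $d(\sigma)$ absent from the Corollary's isometry, a harmless but real normalization mismatch that does not affect density.
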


\begin{proof}
Theorem 13.3.2 in \cite{RRGII} with the erroneous $\Psi$ replaced with $\Phi$
(which is a restatement of Theorem 26.1 in \cite{HCIII})implies that if
$\alpha\in\mathcal{H}\mathbf{(}P,\sigma)$ then $S(T(\alpha))(\mu,\nu
)=\alpha(\mu,\nu)$. This implies the density.
\end{proof}

This theorem can be interpreted as an explicit version of the abstract
Plancherel Theorem for Real Reductive Groups (see section 14.12 in
\cite{RRGII}). Indeed, set $\Gamma^{2}(\mathbf{HS}([P],\sigma))$ equal to the
Hilbert space of elements completion of the space of $\alpha\in\Gamma
(\mathbf{HS}([P],\sigma))$ such that
\[
\int_{\mathfrak{a}_{P}^{\ast}}\mathrm{tr}(\alpha(\sigma,\nu)\alpha(\sigma
,\nu)^{\ast})\mu(\sigma,i\nu)d\nu<\infty
\]
relative to the inner product
\[
\left\langle \alpha,\beta\right\rangle =d(\sigma)\int_{\mathfrak{a}_{P}^{\ast
}}\mathrm{tr}(\alpha(\sigma,\nu)\beta(\sigma,\nu)^{\ast})\mu(\sigma,i\nu)d\nu.
\]

\begin{theorem}
\label{Abs-Planch}Let $\mathcal{P}(G)$ be the set of associativity classes of
cuspidal, parabolic subgroups. For each $[P]\in\mathcal{P}(G)$ (denoted by a
representative that is standard) let $M_{[P]}$ be $M_{P}$, the standard Levi
factor. If $[P]\in\mathcal{P}(G)$ and $\sigma,\mu$ are irreducible square
integrable representations of $^{o}M_{[P]}$ then we say that $\sigma\sim\mu$
if $\sigma$ is unitarily equivalent with $k\mu$ for some $k$ in $N_{K}%
(A_{P}).$ Let $\overline{\mathcal{E}}_{2}(^{o}M_{[P]})$ be the set of such
equivalence classes of square integrable, irreducible representations. If
$\left[  \sigma\right]  \in\overline{\mathcal{E}}_{2}(^{o}M_{[P]})$ with
$\sigma$ a fixed representative. Then as a representation of $G\times G$,
$L^{2}(G)$ is equivalent to the Hilbert direct sum
\[
\bigoplus_{\lbrack P]\in\mathcal{P}(G)}\bigoplus_{[\sigma]\in\overline
{\mathcal{E}}_{2}(^{o}M_{[P]})}\Gamma^{2}(\mathbf{HS}([P],\sigma
))^{N_{K}(A_{P})}.
\]
The unitary intertwining operator giving the equivalence is given on
$\mathcal{C}(G)$ by%
\[
S(f)=\sum_{[P]\in\mathcal{P}(G)}\sum_{[\sigma]\in\overline{\mathcal{E}}%
_{2}(^{o}M_{[P]})}S_{P,\sigma}(f).
\]

\end{theorem}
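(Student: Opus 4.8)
The plan is to assemble the final direct-integral decomposition from the pieces established earlier in the excerpt, treating each associativity class $[P]$ and each orbit $[\sigma]$ of square-integrable representations independently, then summing. The heart of the argument is already in hand: for a fixed cuspidal $(P,A_P)$ and a fixed square-integrable $\sigma$, Theorem \ref{density} together with its preceding Corollary provides a map $T$ from $\mathcal{H}(P,\sigma)$ into $L^2(G)$ whose image is $L^2(G)_{[P],[\sigma]}$, together with the isometry formula $\|T(\alpha)\|^2 = \int_{\mathfrak{a}_P^\ast}\mathrm{tr}(\alpha(\nu)\alpha(\nu)^\ast)\mu(\sigma,i\nu)\,d\nu$. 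The first step is therefore to observe that, up to the normalizing constant $d(\sigma)$, this formula is exactly the norm on $\Gamma^2(\mathbf{HS}([P],\sigma))^{N_K(A_P)}$, so that $T$ (or rather its $N_K(A_P)$-invariant renormalization $S_{P,\sigma}$) extends to an isometric embedding of $\Gamma^2(\mathbf{HS}([P],\sigma))^{N_K(A_P)}$ onto $L^2(G)_{[P],[\sigma]}$.

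Next I would verify that this isometry intertwines the $G\times G$ actions. On the target side $G\times G$ acts by left and right translation on $L^2(G)$; on the source side it acts fiberwise through $\Pi_{P,\sigma,i\nu}(g,h)(T)=\pi_{P,\mu,i\nu}(g)\,T\,\pi_{P,\mu,i\nu}(h)^{-1}$. The intertwining is essentially the content of the Proposition preceding Theorem \ref{density}, which shows $S_{P,\sigma}(\pi_{P,\sigma,i\nu}(f)) $ transforms correctly under $C_k$, combined with the elementary fact that convolution by $f$ corresponds on the spectral side to $\pi_{P,\sigma,i\nu}(f)$; one checks the relation $S_{P,\sigma}(L_gR_hf)(\mu,\nu)=\Pi_{P,\mu,i\nu}(g,h)(S_{P,\sigma}(f)(\mu,\nu))$ on the dense subspace of $K$-finite $f\in\mathcal{C}(G)$ and passes to the limit using the continuity established in the Corollary.

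The third step is the orthogonality and completeness that justify replacing the individual images by a Hilbert direct sum. Orthogonality of the subspaces $L^2(G)_{[P],[\sigma]}$ for inequivalent data follows from disjointness of the corresponding families of tempered representations (distinct associativity classes give representations with disjoint infinitesimal-character/split-rank data, and within a class distinct orbits $[\sigma]$ give inequivalent induced representations), so the Schur-type orthogonality of matrix coefficients forces the $L^2$ spaces to be mutually orthogonal. Completeness — that the sum of all these subspaces is dense in $L^2(G)$ — is precisely the content of Harish-Chandra's Plancherel Theorem as packaged in the compendium Theorem before Theorem \ref{density} (its reference to \cite{HCIII} Theorem 26.1 and \cite{RRGII} Theorems 12.7.1, 13.4.1). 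Summing the per-datum isometries over all $[P]\in\mathcal{P}(G)$ and $[\sigma]\in\overline{\mathcal{E}}_2({}^oM_{[P]})$ then yields the stated unitary equivalence, with the intertwiner given on $\mathcal{C}(G)$ by $S(f)=\sum_{[P]}\sum_{[\sigma]}S_{P,\sigma}(f)$.

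\textbf{The main obstacle} I expect is not any single analytic estimate — those are supplied by Theorem \ref{density} and the Corollary — but the bookkeeping of the $N_K(A_P)$-invariance and the choice of representatives. Because $\sigma$ and $k\sigma$ for $k\in N_K(A_P)$ give unitarily equivalent induced representations via the cocycle $C_k$, one must be careful that the map $S_{P,\sigma}$ lands in the $N_K(A_P)$-\emph{invariant} sections and that the resulting target $\Gamma^2(\mathbf{HS}([P],\sigma))^{N_K(A_P)}$ is independent, up to canonical unitary isomorphism, of the representative $\sigma$ chosen in its orbit; this is exactly where the cocycle condition of Lemma \ref{cocycle} and the unitarity of $C_k$ from Lemma \ref{extension} and Theorem \ref{maass-selberg} are indispensable. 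Verifying that the global $S$ is well defined (not merely formally) and that no datum is counted twice therefore requires matching the orbit structure on the spectral side against the associativity/conjugacy classification supplied by the opening Proposition. Once this invariance is correctly encoded, the theorem follows by summing the established per-datum isometric $G\times G$-equivalences.
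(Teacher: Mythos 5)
Your proposal is correct and is essentially the paper's own route: the paper offers no separate proof of Theorem \ref{Abs-Planch}, presenting it as the direct assembly of the isometry Corollary, the density Theorem \ref{density}, and Harish-Chandra's Plancherel Theorem, which are exactly the ingredients you combine (per-datum unitarity up to the constant $d(\sigma)$, $G\times G$-equivariance via $\pi_{P,\mu,i\nu}(L_{g}R_{h}f)=\pi_{P,\mu,i\nu}(g)\pi_{P,\mu,i\nu}(f)\pi_{P,\mu,i\nu}(h)^{-1}$, then orthogonality and completeness). The only loose point is attributing the mutual orthogonality and completeness of the subspaces $L^{2}(G)_{[P],[\sigma]}$ to a naive Schur-type orthogonality of matrix coefficients; these facts are themselves part of Harish-Chandra's Plancherel Theorem (\cite{RRGII} Theorem 13.4.1 and section 14.12), which is precisely the source the paper invokes.
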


We next describe the $C^{\infty}$ vectors for the representation of $G\times
G$ on $\Gamma^{2}(\mathbf{HS}([P],\sigma))$ and their intersection with
$\mathcal{C}(G)$. For this we need

\begin{lemma}
Let $C$ be the Casimir operator on $G$ corresponding to $B$ on $\mathfrak{g}$
and let $C_{K}$ be the Casimir operator on $K$ corresponding to
$B_{|\mathfrak{k}}$. If $f\in C^{\infty}(G)$ set%
\[
q_{p,r,s}(f)=\left\Vert C^{p}L_{C_{K}^{r}}R_{C_{K}^{sp,}}f\right\Vert .
\]
If $p,q,r\in\mathbb{Z}_{\geq0}$ and if $L^{2}(G)^{\infty}$ denotes the
$G\times G-C^{\infty}$ vectors of $L^{2}(G)$ then
\[
L^{2}(G)^{\infty}=\left\{  f\in C^{\infty}(G)|q_{p,r,s}(f)<\infty
\mathrm{\ all\ }p,q,r\in\mathbb{Z}_{\geq0}\right\}
\]
endowed with the topology induced by the seminorms $q_{p,r,s}$.
\end{lemma}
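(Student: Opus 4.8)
The plan is to prove the identity of spaces and of topologies by showing that three families of seminorms on $C^{\infty}(G)$ are pairwise equivalent: the defining seminorms $f\mapsto\|L_xR_yf\|$ of the $G\times G$ smooth vectors, an intermediate family $f\mapsto\|\Delta_L^{a}\Delta_R^{b}f\|$ built from two invariant elliptic operators, and the stated family $q_{p,r,s}$. First I would invoke the general theory of smooth vectors. For the unitary representation $\Pi(g_1,g_2)f=f(g_1^{-1}\cdot\,g_2)$ of $G\times G$ on $L^{2}(G)$, the derived action of $U(\mathfrak g\oplus\mathfrak g)=U(\mathfrak g)\otimes U(\mathfrak g)$ sends $x\otimes y$ to $\pm L_xR_y$, so by the standard description of smooth vectors (\cite{RRGI}) $L^{2}(G)^{\infty}$ is exactly the set of $f\in L^2(G)$ represented by $C^{\infty}$ functions with $L_xR_yf\in L^2(G)$ for all $x,y$, topologized by the seminorms $f\mapsto\|L_xR_yf\|$. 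Since the Casimir $C$ is central, $L_C=R_C=C$ is bi-invariant and commutes with $L_{C_K}$ and $R_{C_K}$; in particular $C^{p}L_{C_K^{r}}R_{C_K^{s}}=L_{C^{p}C_K^{r}}R_{C_K^{s}}$, so every $q_{p,r,s}$ is one of the defining seminorms. This already gives the continuous inclusion $L^{2}(G)^{\infty}\subseteq\{f:q_{p,r,s}(f)<\infty\ \forall p,r,s\}$, and it remains to prove the reverse.

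Next I introduce $\Delta_L=2L_{C_K}-C$ and $\Delta_R=2R_{C_K}-C$. With $\{X_i\}$ an orthonormal basis of $\mathfrak k$ and $\{Y_j\}$ of $\mathfrak p$ for $\langle\cdot,\cdot\rangle$, one has $C_K=-\sum X_i^{2}$ and $C=-\sum X_i^{2}+\sum Y_j^{2}$, so $\Delta_L=-\sum L_{X_i}^{2}-\sum L_{Y_j}^{2}$ and likewise for $\Delta_R$; hence $\Delta_L,\Delta_R$ are nonnegative, second order, elliptic operators that commute ($\Delta_L$ commutes with all right translations and $\Delta_R$ with all left translations, and left and right translations commute). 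Because $C$ commutes with $L_{C_K}$ and $R_{C_K}$, and $L_{C_K}$ commutes with $R_{C_K}$, the binomial theorem gives
\[
\Delta_L^{a}\Delta_R^{b}=\sum_{r\le a,\ s\le b}c_{a,b,r,s}\,C^{\,a+b-r-s}L_{C_K^{r}}R_{C_K^{s}},
\]
a finite combination of the operators occurring in the $q_{p,r,s}$, so $\|\Delta_L^{a}\Delta_R^{b}f\|\le\sum|c_{a,b,r,s}|\,q_{a+b-r-s,\,r,\,s}(f)$. Thus the $\Delta$-seminorms are dominated by finitely many $q$'s, and conversely each $q_{p,r,s}$ is visibly one of the $\|L_xR_yf\|$.

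The heart of the matter, and the main obstacle, is the elliptic estimate bounding $\|L_xR_yf\|$ by the $\Delta$-seminorms. I would first apply the global estimate $\|R_yg\|\le C\sum_{b\le\lceil\deg y/2\rceil}\|\Delta_R^{b}g\|$ to $g=L_xf$; since $\Delta_R$ commutes with $L_x$ this yields $\|L_xR_yf\|\le C\sum_b\|L_x\Delta_R^{b}f\|$, and then the estimate $\|L_xh\|\le C'\sum_{a\le\lceil\deg x/2\rceil}\|\Delta_L^{a}h\|$ applied to $h=\Delta_R^{b}f$ gives $\|L_xR_yf\|\le C''\sum_{a,b}\|\Delta_L^{a}\Delta_R^{b}f\|$. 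Combined with the expansion above this bounds $\|L_xR_yf\|$ by a finite sum of $q_{p,r,s}(f)$, giving the reverse continuous inclusion and hence the asserted equality of Fréchet spaces. The difficulty lies precisely in these two estimates: because $G$ is noncompact and, relative to a left-invariant frame, right derivatives have unbounded coefficients, a single elliptic operator cannot control both left and right derivatives in $L^{2}$ globally, which is exactly why both $\Delta_L$ and $\Delta_R$ (equivalently, both the left and right powers of $C_K$) are needed. Each individual estimate follows from local elliptic regularity for the positive elliptic operator $1+\Delta_L$ (resp. $1+\Delta_R$), made uniform over $G$ by its translation invariance and the bounded geometry of $G$, as in the Sobolev-type lemmas of \cite{RRGI}.
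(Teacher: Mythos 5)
Your proposal is correct and follows essentially the same route as the paper: both arguments hinge on the elliptic operator $\Delta=\pm(C-2C_{K})$ acting on the left and the right (your $\Delta_{L},\Delta_{R}$ are $-L_{\Delta},-R_{\Delta}$), the binomial expansion of its powers in terms of the commuting operators $C$, $L_{C_{K}}$, $R_{C_{K}}$, and elliptic regularity to characterize $L^{2}(G)^{\infty}$ by $\Delta$-seminorms. The only difference is one of assembly: the paper cites \cite{CWParam} Lemma A.1 for that characterization and then concludes the topological equality via the closed graph theorem, whereas you obtain it directly from explicit two-sided seminorm estimates of Nelson--Goodman type, which is a legitimate (slightly more self-contained) way to finish.
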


\begin{proof}
Let $\Delta=C-2C_{K}$ then $\Delta$ defines an elliptic operator on $G$ using
elliptic regularity one can show (see \cite{CWParam} Lemma A.1) that
$L^{2}(G)^{\infty}$ defined as above using the seminorms $\xi_{p,q}%
(f)=\left\Vert L_{\Delta^{p}}R_{\Delta^{q}}f\right\Vert $, Using the formula
for $\Delta$ we see that if $q_{p,q,r}(f)<\infty$ all $p,q,r$ then $\xi
_{p,q}(f)<\infty$ all $p,q.$ Thus if $Z$ is the Fr\'{e}chet space defined by
the $q$--seminorms then $L^{2}(G)^{\infty}\subset Z$. On the other hand the
usual definition of $C^{\infty}$ vector implies that $Z\subset L^{2}%
(G)^{\infty}$. The closed graph theorem now implies that $Z=L^{2}(G)^{\infty}$
as a Fr\'{e}chet space.
\end{proof}

If $\alpha\in\Gamma(\mathcal{\underline{\mathcal{S}}}(HS([P],\sigma)))$ then
we set
\[
w_{p,q,r}(\alpha)=\mathrm{sup}\left\{  ((1+\left\Vert \nu\right\Vert
)^{r}\left\Vert C_{K}^{p}\alpha(k\sigma,\nu)C_{K}^{r}\right\Vert |k\in
N_{K}(A_{P}),\nu\in\mathfrak{a}_{P}^{\ast}\right\}  .
\]

Set $\Gamma_{rd}(\mathcal{\underline{\mathcal{S}}}(HS([P],\sigma)))$ equal to
the space of all $\alpha\in\Gamma(\mathcal{\underline{\mathcal{S}}%
}(HS([P],\sigma)))$ such that $w_{p,q,r}(\alpha)<\infty$ for all
$p,q,r\in\mathbb{Z}_{\geq0}$ endowed with the topology induced by the
seminorms $w_{p.q.r\text{.}}$

\begin{theorem}
\label{smoothdecomp}$T$ maps $\Gamma_{rd}(\mathcal{\underline{\mathcal{S}}%
}(HS([P],\sigma)))$ to $L^{2}(G)_{[P],\sigma}^{\infty}$ defines an isomorphism
of $G\times G$--Fr\'{e}chet representations. Also $L^{2}(G)^{\infty}$ is the
isomorphic with
\[
\bigoplus_{\lbrack P]\in\mathcal{P}(G)}\bigoplus_{[\sigma]\in\overline
{\mathcal{E}}_{2}(^{o}M_{[P]})}\Gamma_{hs}^{2}(\underline{\mathcal{S}%
}\mathbf{HS}([P],\sigma))^{N_{K}(A_{P})}%
\]
Fr\'{e}chet space direct sum as a smooth Fr\'{e}chet representation of
$G\times G$.
\end{theorem}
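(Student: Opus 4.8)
The plan is to deduce the smooth-vector statement from the Hilbert-space isomorphism already in hand, by first showing that $T$ is $G\times G$-equivariant and then matching the two systems of smooth-vector seminorms. I would verify equivariance directly from the integral formula $T(\alpha)(g)=\int_{\mathfrak{a}_P^\ast}\mathrm{tr}(\pi_{P,\sigma,i\nu}(g^{-1})\alpha(\nu))\mu(\sigma,i\nu)d\nu$: left translation by $a$ replaces $\pi(g^{-1})$ by $\pi(g^{-1})\pi(a)$, so $L_aT(\alpha)=T(\pi_{P,\sigma,i\nu}(a)\circ\alpha)$, and by the trace identity $\mathrm{tr}(\pi(b^{-1})\pi(g^{-1})\alpha)=\mathrm{tr}(\pi(g^{-1})\alpha\,\pi(b^{-1}))$ right translation gives $R_bT(\alpha)=T(\alpha\circ\pi_{P,\sigma,i\nu}(b)^{-1})$. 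These are exactly the fibrewise actions $\Pi_{P,\sigma,i\nu}(a,b)$, so $T$ intertwines the $G\times G$-actions; combined with the isometry of the Corollary and the identity $S\circ T=\mathrm{id}$ of Theorem \ref{density}, $T$ is a $G\times G$-equivariant unitary isomorphism onto $L^2(G)_{[P],\sigma}$.

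Because a $G\times G$-equivariant unitary isomorphism carries $C^\infty$ vectors onto $C^\infty$ vectors and is a homeomorphism for the smooth Fréchet topologies, it suffices to identify the smooth vectors on each side. On $L^2(G)_{[P],\sigma}$ these are described, as in the Lemma characterizing $L^2(G)^\infty$, by the bi-invariant elliptic operator $\Delta=C-2C_K$ and the seminorms $\|L_{\Delta^p}R_{\Delta^q}f\|$. I would transport $\Delta$ through $T$: since $C$ acts on $\pi_{P,\sigma,i\nu}$ by the scalar $\chi_\nu(C)$, a real quadratic polynomial in $\nu$ with $|\chi_\nu(C)|\asymp 1+\|\nu\|^2$, and $C_K$ acts fibrewise, $\Delta$ becomes the fibrewise diagonal operator $D_\nu=\chi_\nu(C)-2\pi_{P,\sigma,i\nu}(C_K)$, whose eigenvalue on the $\gamma$-isotypic row is $\asymp 1+\|\nu\|^2+\lambda_\gamma$. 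Hence $\|L_{\Delta^p}R_{\Delta^q}T(\alpha)\|^2=\int_{\mathfrak{a}_P^\ast}\|D_\nu^{\,p}\alpha(\nu)D_\nu^{\,q}\|_{HS}^2\,\mu(\sigma,i\nu)\,d\nu$, so the smooth vectors on the section side are precisely those $\alpha$ for which all of these weighted Hilbert--Schmidt integrals are finite.

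It remains to show that this family of $L^2$-integral seminorms cuts out the same Fréchet space, with the same topology, as the $w_{p,q,r}$-seminorms defining $\Gamma_{rd}(\underline{\mathcal{S}}(HS([P],\sigma)))$. The eigenvalue weight $1+\|\nu\|^2+\lambda_\gamma$ factors, up to equivalence, into a $(1+\|\nu\|)$-power and a $(1+\lambda_\gamma)$-power, i.e. into the $\nu$-growth factor and a power of $C_K$; and the convergence $\sum_{\gamma}d_\gamma(1+\lambda_\gamma)^{-N}<\infty$ for $N>\tfrac{\dim K}{2}$ lets me trade Hilbert--Schmidt norms for operator norms and sum over $K$-types at the cost of two extra powers of $C_K$. \textbf{This comparison is the crux of the argument and the step I expect to be the main obstacle}: the $G\times G$-action never differentiates the continuous parameter $\nu$, so the smooth-vector seminorms control $\nu$ only through \emph{multiplication} by the polynomials $\chi_\nu(C)^p\asymp\|\nu\|^{2p}$, and one must argue that the availability of \emph{all} such polynomial weights, together with the fibrewise Schwartz-operator structure $\mathcal{S}(H_{[P],\sigma})=HS(H_{[P],\sigma})^\infty$ of \cite{CWParam} Proposition A.2, suffices to recover the supremum-type control recorded by the $w_{p,q,r}$. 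Once the two seminorm systems are shown to dominate one another, a closed-graph argument of the kind used in the preceding Lemma upgrades the resulting bijection to a topological isomorphism of Fréchet $G\times G$-representations, giving the first assertion.

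For the global statement I would begin from the Hilbert direct-sum decomposition of $L^2(G)$ in Theorem \ref{Abs-Planch} and pass to smooth vectors. A vector $f=(f_{[P],\sigma})$ is $G\times G$-smooth exactly when $\sum_{[P],[\sigma]}\|L_{\Delta^p}R_{\Delta^q}f_{[P],\sigma}\|^2<\infty$ for all $p,q$; since $\Delta$ is bi-invariant elliptic and its eigenvalue on the $([P],\sigma)$-summand grows with the Casimir of the discrete-series datum $\sigma$ on ${}^oM_{[P]}$ as well as with $\nu$, the sum over the countably many summands converges automatically once each summand is smooth. This identifies $L^2(G)^\infty$ with the Fréchet direct sum $\bigoplus_{[P]}\bigoplus_{[\sigma]}\Gamma_{rd}(\underline{\mathcal{S}}\,\mathbf{HS}([P],\sigma))^{N_K(A_P)}$, the topology being the one induced by the global $\Delta$-seminorms, and the per-summand isomorphism of the previous paragraphs then yields the stated equivalence of smooth Fréchet $G\times G$-representations.
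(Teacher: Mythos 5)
Your reduction is genuinely different from the paper's argument, and it stalls exactly where you say it does; unfortunately that stall is not a technical inconvenience but a real gap. The paper never attempts to identify the smooth vectors of $\Gamma^{2}(\mathbf{HS}([P],\sigma))$ abstractly and then compare two seminorm systems. Instead it works with the two explicit maps $S$ and $T$: from the exact identity
\[
\pi_{P,k\sigma,i\nu}\left(  (\lambda_{\sigma}-\left\langle \rho_{P},\rho
_{P}\right\rangle -C)^{r}L_{C_{K}^{p}}R_{C_{K}^{q}}f\right)  =\left\Vert
\nu\right\Vert ^{2r}\pi_{\lbrack P],k\sigma}(C_{K}^{p})\,\pi_{P,k\sigma,i\nu
}(f)\,\pi_{\lbrack P],k\sigma}(C_{K}^{q})
\]
it deduces that $S$ is continuous from $L^{2}(G)_{[P],\sigma}^{\infty}$ into $\Gamma_{rd}(\underline{\mathcal{S}}(HS([P],\sigma)))^{N_{K}(A_{P})}$ and that $T$ is continuous from $\Gamma_{rd}$ into $L^{2}(G)_{[P],\sigma}^{\infty}$, and then concludes because $ST$ and $TS$ are the identity on the respective $L^{2}$ spaces. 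The essential point is that sup-in-$\nu$ control of $\pi_{P,\sigma,i\nu}(f)$ comes from the matrix-coefficient estimates of Proposition \ref{Weyl-ests}, which are pointwise in $\nu$ and are function-theoretic input about the elements being transformed, not consequences of the seminorm structure alone.

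Your crux step, by contrast, is the assertion that the transported smooth-vector seminorms $\int_{\mathfrak{a}_{P}^{\ast}}\Vert D_{\nu}^{p}\alpha(\nu)D_{\nu}^{q}\Vert_{HS}^{2}\,\mu(\sigma,i\nu)\,d\nu$ and the sup-seminorms $w_{p,q,r}$ cut out the same space, and as an abstract equivalence this is \emph{false}, not merely hard. Only one direction holds: sup decay, together with polynomial growth of $\mu(\sigma,i\nu)$ and $\sum_{\gamma}d_{\gamma}(1+\lambda_{\gamma})^{-N}<\infty$, gives finiteness of the weighted $L^{2}$ integrals. For the converse there is no mechanism at all: as you yourself observe, the $G\times G$-action never differentiates $\nu$, and weighted-$L^{2}$ finiteness for every polynomial weight does not imply boundedness. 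Concretely, a section supported in a fixed pair of $K$-types, built out of spikes of height $n$ and width $e^{-n^{2}}$ placed near $\Vert\nu\Vert=n$, has every weighted $L^{2}$ norm finite but is not almost-everywhere equal to any bounded --- let alone continuous and rapidly decreasing --- section; already for $G=\mathbb{R}$ your two sides are the Sobolev space $\bigcap_{k}H^{k}(\mathbb{R})$ versus the Fourier transforms of sup-rapidly-decreasing continuous functions, and these differ. So your plan reduces the theorem to a false lemma, and trading Hilbert--Schmidt norms for operator norms or invoking \cite{CWParam} Proposition A.2 fiberwise cannot close it: any proof must, as the paper's does, exploit pointwise-in-$\nu$ definitions and estimates valid for the particular sections arising as transforms (Schwartz-class inputs and Proposition \ref{Weyl-ests}), rather than the abstract smooth-vector seminorms. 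I will add, to be fair, that the difficulty you flagged is a genuine one: the paper's own one-line claim that $S$ extends continuously from the $L^{2}$-type topology of $L^{2}(G)_{[P],\sigma}^{\infty}$ to the sup-type topology of $\Gamma_{rd}$ passes over precisely this point, so your honesty here has located the thinnest spot of the published argument as well.
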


\begin{proof}
The second assertion follows from the first and the fact that $L^{2}(G)$ is
isomorphic as a representation of $G\times G$ with $\bigoplus_{[P],[\sigma
]}L^{2}(G)_{[P],\sigma}$. We will now prove the first assertion. We note that
if $\mu$ is an irreducible representation of ${}^{o}M_{P}$ then if $\nu
\in\mathfrak{a}_{P}^{\ast}$%
\[
\pi_{P,\mu,i\nu}(C)=\lambda_{\mu}-\left\langle \nu,\nu\right\rangle
-\left\langle \rho,\rho\right\rangle
\]
with $\pi_{\mu}(C)=\lambda_{\mu}I$ .\ Also, $\lambda_{k\sigma}=\lambda
_{\sigma},$ $k\in N_{K}(A_{P})$. This implies that if $f\in\mathcal{C}(G)$
then
\[
\pi_{P,k\sigma,i\nu}((\lambda_{\sigma}-\left\langle \rho_{P},\rho
_{P}\right\rangle -C)^{r}L_{C_{K}^{p}}R_{C_{K}^{q}}f)=\left\Vert
\nu\right\Vert ^{2r}\pi_{\lbrack P],k\sigma}(C_{K}^{p})\pi_{P,\,k\sigma,i\nu
}(f)\pi_{\lbrack P],k\sigma}(C_{K}^{r}).
\]
This implies that the map $S_{[P],\sigma|L^{2}(G)_{[P],\sigma}\cap
\mathcal{C}(G)}$ extends to a continuous map of $L^{2}(G)_{[P],\sigma}%
^{\infty}$ into $\Gamma_{rd}(\mathcal{\underline{\mathcal{S}}}(HS([P],\sigma
)))^{N_{K}(A_{P})}$. The equality also implies that the map $T $ from the
dense space $S_{[P],\sigma}(\mathcal{C}(G))$ in $\Gamma^{2}(\mathbf{HS}%
([P],\sigma)$ extends to a continuous map of $\Gamma_{rd}(\mathcal{\underline
{\mathcal{S}}}(HS([P],\sigma)))$ to $L^{2}(G)_{[P],\sigma}^{\infty}$. Since
$ST$ and $TS$ are identity maps on the respective $L^{2}$ spaces the result follows.
\end{proof}

\section{The Jacquet integrals and the Whittaker Schwartz space}

Notation as in the preceding section. We will also write $H_{\bar{P}%
,\sigma,\nu}$ for $H_{[P],\sigma}$ if we are using the space as the
representation space for $\pi_{\bar{P},\sigma,\nu}$. In this section we will
recall some properties of the Jacquet integrals. Let $\chi$ be a unitary
character of $N_{o}$. Note that $\ker d\chi\supset\lbrack\mathfrak{n}%
_{o},\mathfrak{n}_{o}]$. We say that $\chi$ is generic if $d\chi$ restricted
to any weight space of $A_{o}$ on $\mathfrak{n}_{o}/[\mathfrak{n}%
_{o},\mathfrak{n}_{o}]$ is non-zero. If $(\pi,V)$ is a smooth Fr\'{e}chet
representation of $G$ then define $Wh_{\chi}(V)$ to be the subspace of
$\lambda\in V^{\prime}$ such that $\lambda(\pi(n)v)=\chi(n)\lambda(v)$, $n\in
N_{o}$, $v\in V$. Fix $\chi$, a generic character of $N_{o}.$

Let $(\sigma,H_{\sigma})$ be an irreducible square integrable representation
of a real reductive group $M$ , $N$ a maximal unipotent subgroup of $M$ ,
$\eta$ \ a generic character of $N$ and assume that $Wh_{\eta}(H_{\sigma
}^{\infty})\neq0$. In \cite{WPT} Corollary 38 we defined an inner product on
$Wh_{\eta}(H_{\sigma}^{\infty})$, $(...,...)_{\sigma}$ with the following
property (here we have fixed Haar measures on $M$ and $N$ and have the
quotient measure $d\bar{m}$ on $N\backslash M$)%
\[
\int_{N\backslash M}\lambda(\sigma(m)v)\overline{\mu(\sigma(m)w)}d\bar
{m}=(\lambda,\mu)_{\sigma}\left\langle v,w\right\rangle
\]
for $\lambda,\mu\in Wh_{\eta}(H_{\sigma}^{\infty})$ and $v,w\in H_{\sigma
}^{\infty}$.

Let $P$ be a standard cuspidal parabolic subgroup of $G$ and let
$(\sigma,H_{\sigma})$ be an irreducible square integrable representation of
${}^{o}M_{P}$. If $\lambda\in Wh_{\chi_{|N_{o}\cap M_{P}}}(H_{\sigma}^{\infty
})$ and $\nu\in\left(  \mathfrak{a}_{P}^{\ast}\right)  _{\mathbb{C}}$, $u\in
H_{[P],\sigma}^{\infty}$ then consider the integral%
\[
J_{\chi}(P,\sigma,\nu)(\lambda)(u)=\int_{N_{P}}\chi(n)^{-1}\lambda({}_{\bar
{P}}u_{\sigma,\nu}(n))dn.
\]
This integral converges for $\nu$ in the open half space
\[
\left(  \mathfrak{a}_{P}^{\ast}\right)  _{\mathbb{C}}^{-}=\{\nu\in\left(
\mathfrak{a}_{P}^{\ast}\right)  _{\mathbb{C}}|\operatorname{Re}(\nu
,\alpha)<0,\alpha\in\Phi(P_{o},A_{o})\}
\]
of $\left(  \mathfrak{a}_{P}^{\ast}\right)  _{\mathbb{C}}$ and defines a
weakly holomorphic map of $\left(  \mathfrak{a}_{P}^{\ast}\right)
_{\mathbb{C}}^{-}$ into $\left(  H_{[p],\sigma}^{\infty}\right)  ^{\prime}$.
If $\nu\in\left(  \mathfrak{a}_{P}^{\ast}\right)  _{\mathbb{C}}^{-}$ then
$J_{\chi}(P,\sigma,\nu)(\lambda)\in Wh_{\chi}(H_{\bar{P},\sigma,\nu}^{\infty})
$ (see Lemma 15.6.5 \cite{RRGII})Theorem 15.6.7 in \cite{RRGII}) asserts

\begin{theorem}
\label{Isomorphism}If $\lambda\in Wh_{\chi_{|N_{o}\cap M_{P}}}(H_{\sigma
}^{\infty})$ then $\nu\mapsto J_{\chi}(P,\sigma,\nu)(\lambda)$ has a weakly
holomorphic extension to $\left(  \mathfrak{a}_{P}^{\ast}\right)
_{\mathbb{C}} $. Furthermore, for all $\nu\in\left(  \mathfrak{a}_{P}^{\ast
}\right)  _{\mathbb{C}}$
\[
J_{\chi}(P,\sigma,\nu):Wh_{\chi_{|N_{o}\cap M_{P}}}(H_{\sigma}^{\infty
})\rightarrow Wh_{\chi}(H_{\bar{P},\sigma,\nu}^{\infty})
\]
is bijective.
\end{theorem}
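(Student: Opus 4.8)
The plan is to prove Theorem \ref{Isomorphism} in two stages: first establish the holomorphic extension of $\nu \mapsto J_{\chi}(P,\sigma,\nu)(\lambda)$ from the half-space $\left(\mathfrak{a}_P^{\ast}\right)_{\mathbb{C}}^{-}$ to all of $\left(\mathfrak{a}_P^{\ast}\right)_{\mathbb{C}}$, and then prove bijectivity of the extended map for every $\nu$. For the extension, the natural device is to use the standard intertwining operators $J_{Q|P}(\sigma,\nu)$ recalled after Lemma \ref{Totality}, together with the functional equations that the Jacquet integral satisfies with respect to them. Specifically, I would show that composing $J_{\chi}(P,\sigma,\nu)$ with an appropriate intertwining operator relates the integral at $\nu$ to an integral that converges in a translated half-space; since the intertwining operators have weakly meromorphic continuation and the $\mu$-function controls their poles, one can patch together local holomorphic extensions on overlapping cones to cover all of $\left(\mathfrak{a}_P^{\ast}\right)_{\mathbb{C}}$. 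The genericity of $\chi$ enters crucially here, guaranteeing that the Jacquet integral is nonzero and that the relevant local coefficients do not force spurious poles.

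For the bijectivity, I would exploit the inner product structure on $Wh_{\eta}(H_{\sigma}^{\infty})$ recalled just before the theorem. The key observation is that both the source $Wh_{\chi_{|N_o \cap M_P}}(H_{\sigma}^{\infty})$ and the target $Wh_{\chi}(H_{\bar P,\sigma,\nu}^{\infty})$ are finite-dimensional (this follows from the multiplicity-one type bounds for Whittaker models of square-integrable representations and from admissibility of $H_{[P],\sigma}$), and that their dimensions agree independently of $\nu$. Once equality of dimensions is known, bijectivity for all $\nu$ reduces to injectivity for a single $\nu$ in the convergence region, because the determinant of $J_{\chi}(P,\sigma,\nu)$ in suitable bases is a holomorphic function of $\nu$ that is not identically zero (it is nonzero on $\left(\mathfrak{a}_P^{\ast}\right)_{\mathbb{C}}^{-}$), hence nonvanishing away from a proper analytic subset. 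To upgrade to nonvanishing everywhere, I would again use the functional equations: the composition $J_{\chi}(\bar P,\sigma,-\nu) \circ (\text{intertwining operator})$ should equal a scalar multiple of $J_{\chi}(P,\sigma,\nu)$, where the scalar is expressible through the $\mu$-function, and the theorem preceding Lemma \ref{extension} guarantees $\mu(\sigma,\nu) \neq 0$ on $\left(\mathfrak{a}_P^{\ast}\right)'$. This factorization forces the Jacquet integral to remain invertible precisely where the $\mu$-function is nonzero, and a separate limiting or continuity argument handles the walls.

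The injectivity for generic $\nu$ is where the inner-product identity does the real work: given $\lambda$ with $J_{\chi}(P,\sigma,\nu)(\lambda) = 0$, one can test against matrix coefficients and use the integral formula $\int_{N\backslash M}\lambda(\sigma(m)v)\overline{\mu(\sigma(m)w)}\,d\bar m = (\lambda,\mu)_{\sigma}\langle v,w\rangle$ to conclude that the norm of $\lambda$ vanishes, hence $\lambda = 0$. I expect the main obstacle to be the holomorphic continuation across the walls where $(\nu,\alpha)=0$ for some root $\alpha$: the intertwining operators themselves can have poles there, so one must verify that the combination defining $J_{\chi}$ has removable singularities, using the precise relationship between the poles of the intertwining operators and the zeros of the Jacquet integral. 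This is exactly the kind of delicate matching that the earlier analysis (and the cited Lemma 15.6.5 and the $\mu$-function results) is designed to furnish, so I would lean heavily on those to extract the needed holomorphy and nonvanishing statements rather than re-deriving the analytic continuation from scratch.
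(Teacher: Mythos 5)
The paper does not actually prove this theorem: it is quoted verbatim as Theorem 15.6.7 of \cite{RRGII} (with the corrections supplied in \cite{WPT}), where the proof rests on the holomorphic continuation theory for generalized Jacquet integrals developed in \cite{JacquetInt} --- a Lie-algebra-cohomology/differential-equations argument, not an intertwining-operator argument. Your proposal attempts a genuinely different route, but it has a gap at exactly the points where the theorem is deep. The functional equations you invoke relate $J_{\chi}(P,\sigma,\nu)$ to $J_{\chi}(\bar{P},\sigma,\cdot)$ through operators $J_{Q|P}(\sigma,\nu)$ and local coefficients that are only \emph{meromorphic} in $\nu$; patching with them can yield meromorphic continuation and invertibility off a proper analytic subset, but the content of the theorem is that the continuation is holomorphic on \emph{all} of $\left(\mathfrak{a}_{P}^{\ast}\right)_{\mathbb{C}}$ and that $J_{\chi}(P,\sigma,\nu)$ is bijective at \emph{every} $\nu$, including the walls and the pole loci of the intertwining operators. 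Your own text concedes this ("one must verify that the combination defining $J_{\chi}$ has removable singularities") and then defers it to Lemma 15.6.5 of \cite{RRGII} and the $\mu$-function results, but neither furnishes it: Lemma 15.6.5 only places the integral in $Wh_{\chi}(H_{\bar{P},\sigma,\nu}^{\infty})$ on the convergence half-space, and the nonvanishing theorem $\mu(\sigma,\nu)\neq 0$ holds only for \emph{real} regular $\nu\in\left(\mathfrak{a}_{P}^{\ast}\right)^{\prime}$, so it controls nothing on the complement of the real regular set in $\left(\mathfrak{a}_{P}^{\ast}\right)_{\mathbb{C}}$.

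Two further steps would fail as stated. First, "a separate limiting or continuity argument handles the walls" cannot work: invertibility is not a closed condition, so knowing $J_{\chi}(P,\sigma,\nu)$ is bijective on a dense open set says nothing at a limit point --- ruling out a drop in rank at the walls is precisely the hard part of Theorem 15.6.7. Second, your dimension-count reduction assumes $\dim Wh_{\chi}(H_{\bar{P},\sigma,\nu}^{\infty})=\dim Wh_{\chi_{|N_{o}\cap M_{P}}}(H_{\sigma}^{\infty})$ for all $\nu$, independently of $\nu$; for smooth (moderate-growth) Whittaker functionals on a general real reductive group this equality is essentially equivalent to the surjectivity being proved, and indeed in this paper the finite dimensionality of $H_{[P],\sigma}^{\infty}/\left(H_{[P],\sigma}^{\infty}\right)_{\bar{P},\nu\chi}$ is \emph{deduced from} Theorem \ref{Isomorphism} together with Theorem 54 of \cite{WPT}, so taking it as an input is circular. (By contrast, the injectivity step you worry about is the easy part: for $\nu$ in the convergence region one tests against sections supported on the open Bruhat cell, where the Jacquet integral becomes a Fourier transform in the $N_{P}$ variable; the inner-product identity for $(\cdot,\cdot)_{\sigma}$ is not needed there.) To repair the proof you would have to import the entire-continuation and rank-constancy statements from \cite{JacquetInt} or \cite{RRGII} Chapter 15, at which point the argument reduces to the citation the paper itself uses.
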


We also recall Theorems 43 and 45 in \cite{WPT}.

\begin{theorem}
\label{Tempered-Estimate}There exists $m>0$ and a continuous seminorm,
$\gamma_{1}$ on $I_{\sigma}^{\infty}$ such that if $u\in I_{\sigma}^{\infty}$
and $\lambda\in Wh_{\chi_{|N_{o}\cap M_{P}}}(H_{\sigma}^{\infty})$ then
\[
\left\vert J(P,\sigma,i\nu)(\lambda)(u)\right\vert \leq\gamma_{1}%
(u)(1+\left\Vert \nu\right\Vert )^{m}\left\Vert \lambda\right\Vert _{\sigma}%
\]
for $\nu\in\mathfrak{a}^{\ast}$.
\end{theorem}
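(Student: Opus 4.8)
The plan is to obtain a clean, absolutely convergent estimate in the region where the Jacquet integral is given by its integral formula, and then push that estimate to the unitary axis $i\mathfrak{a}_P^\ast$ by holomorphic continuation. Recall that for $\nu$ in the half space $(\mathfrak{a}_P^\ast)_\mathbb{C}^-$ the integral $J_\chi(P,\sigma,\nu)(\lambda)(u)=\int_{N_P}\chi(n)^{-1}\lambda({}_{\bar P}u_{\sigma,\nu}(n))\,dn$ converges absolutely, and by Theorem \ref{Isomorphism} it continues holomorphically to all of $(\mathfrak{a}_P^\ast)_\mathbb{C}$. The difficulty is precisely that on $i\mathfrak{a}_P^\ast$ the defining integral is only conditionally convergent: writing $n=\bar n(n)\mu(n)\kappa(n)$ in the $\bar P=\bar N_P M_P K$ decomposition, the integrand has size comparable to $a(\mu(n))^{\operatorname{Re}\nu+\rho_P}$, which is integrable against $dn$ in the interior of the half space but sits on the boundary of integrability when $\operatorname{Re}\nu=0$. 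The oscillatory factor $\chi(n)^{-1}$ is what saves the day, and the genericity of $\chi$ is what lets us exploit it.

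First I would record the crude bound valid in the interior. Using the inner product on $Wh_{\chi_{|N_o\cap M_P}}(H_\sigma^\infty)$ supplied by \cite{WPT} Corollary 38, the functional $\lambda$ is bounded by $\|\lambda\|_\sigma$ times a fixed continuous seminorm on $H_\sigma^\infty$; hence $|\lambda({}_{\bar P}u_{\sigma,\nu}(n))|$ is controlled by $\|\lambda\|_\sigma$ times a seminorm of the induced section times $a(\mu(n))^{\operatorname{Re}\nu+\rho_P}$. For $\operatorname{Re}\nu$ bounded away from $0$ this integrates to a finite constant, but the constant blows up as $\operatorname{Re}\nu\to 0$, so this alone does not reach the unitary axis.

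The key step is to integrate by parts against the character. Because $\chi$ is generic, for each simple restricted root I can choose a root vector $X\in\mathfrak{n}_o$ with $d\chi(X)\neq 0$; the associated right invariant vector field acts on $\chi(n)^{-1}$ as multiplication by the nonzero scalar $-d\chi(X)$, so $\chi(n)^{-1}=(-d\chi(X))^{-1}R_X(\chi(n)^{-1})$. Integrating by parts in the interior of $(\mathfrak{a}_P^\ast)_\mathbb{C}^-$, where absolute convergence kills the boundary terms, transfers $R_X$ onto $n\mapsto\lambda({}_{\bar P}u_{\sigma,\nu}(n))$. Re-decomposing $n\exp tX$ in $\bar P=\bar N_P M_P K$ shows that this derivative is again of Jacquet type for a modified smooth vector, and that differentiating the factor $a(\mu(n))^{\nu+\rho_P}$ contributes coefficients that are \emph{linear} in $\nu$ and that decay as $n\to\infty$ in $N_P$. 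Iterating $m$ times, with $m$ chosen so that the accumulated decay makes the integrand absolutely and uniformly convergent for $\operatorname{Re}\nu$ in a neighbourhood of $0$, produces an identity of the shape
\[
J_\chi(P,\sigma,\nu)(\lambda)(u)=\sum_{\text{finite}}P_j(\nu)\int_{N_P}\chi(n)^{-1}\lambda({}_{\bar P}(u_j)_{\sigma,\nu}(n))\,dn,
\]
with $P_j$ of degree at most $m$ in $\nu$ and $u_j=d\pi(x_j)u$ for fixed $x_j\in U(\mathfrak{g})$. Since the right side is now an absolutely convergent integral, holomorphic and bounded up to the boundary, the identity together with its estimate extends by continuity to $\operatorname{Re}\nu=0$.

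Finally I would assemble the bound on the unitary axis. Specializing to $\nu\in\mathfrak{a}_P^\ast$ (parameter $i\nu$), the polynomials $P_j(i\nu)$ contribute the factor $(1+\|\nu\|)^m$; the vectors $u_j=d\pi(x_j)u$ are controlled by a single continuous seminorm of $u$ on $H_{[P],\sigma}^\infty=I_\sigma^\infty$, using that the $C^\infty$ topology of the induced representation is generated by the $U(\mathfrak{g})$ derivatives and is independent of $\nu$; applying $\lambda$ and Corollary 38 again produces the factor $\|\lambda\|_\sigma$, and the now convergent $N_P$ integral is a finite constant. Absorbing these constants into a single continuous seminorm $\gamma_1$ on $I_\sigma^\infty$ gives the asserted inequality. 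The main obstacle is the bookkeeping in the integration by parts: one must verify that differentiating the induced section in the $N_P$ directions genuinely improves the $a(\mu(n))$ decay while costing at most linearly in $\nu$, uniformly in $n$, and that these two effects can be run simultaneously often enough to cross the boundary. An alternative to the hand integration by parts is to view $g\mapsto J_\chi(P,\sigma,i\nu)(\lambda)(\pi_{\bar P,\sigma,i\nu}(g)u)$ as a Whittaker function and to deduce its moderate growth and polynomial $\nu$ dependence from the holonomic system it satisfies (the infinitesimal character of $\sigma_{i\nu}$ together with the Whittaker equations); but the integration by parts argument is the most direct route to the explicit degree $m$.
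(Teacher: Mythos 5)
The first thing to say is that the paper does not prove this theorem at all: it is quoted from \cite{WPT}, where it is Theorem 43 (recalled together with Theorem 45), so there is no internal argument to compare yours against. The proofs of such estimates in the literature this paper leans on (\cite{JacquetInt}, Chapter 15 of \cite{RRGII}, and \cite{WPT} itself) do not proceed by integration by parts against $\chi$; the holomorphic continuation the paper does quote (Theorem \ref{Isomorphism}, i.e.\ Theorem 15.6.7 of \cite{RRGII}) is obtained by algebraic means --- finite dimensionality of spaces of Whittaker functionals and shift/recursion arguments coming from tensoring with finite-dimensional representations --- with the growth estimates in $\nu$ extracted from that machinery. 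So your route is genuinely different from the source's, and the question is whether it is complete.

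It is not, and the gap sits exactly where you wrote ``bookkeeping'': that step is the entire analytic content of the theorem, and as sketched it does not go through beyond real rank one. The character $\chi$ restricted to $N_{P}$ is degenerate: its differential vanishes on every restricted root space of $\mathfrak{n}_{P}$ except those attached to the simple roots outside the Levi (already for $G=SL(3,\mathbb{R})$ and $P$ minimal it is trivial on the one-parameter center of the Heisenberg group $N_{o}$, and in general it kills $[\mathfrak{n}_{o},\mathfrak{n}_{o}]\cap\mathfrak{n}_{P}$ and more). Hence oscillation is available only in a few directions, whereas the divergence of $\int_{N_{P}}a(\mu(n))^{\rho_{P}}dn$ at $\nu=0$ is a joint phenomenon involving all the roots of $(\mathfrak{n}_{P},\mathfrak{a}_{P})$ (every factor in the Gindikin--Karpelevich product diverges there). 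Your iteration differentiates only along simple root vectors; since $N_{P}$ is nonabelian these derivatives mix the coordinates, and you give no argument (nor is there an obvious one) that the decay so gained is integrable over all of $N_{P}$ rather than merely in the differentiated variables, uniformly for $\operatorname{Re}\nu$ near $0$, and at cost only polynomial in $\nu$. The rank-one computation motivating the sketch --- abelian $N_{P}\cong\mathbb{R}$ with $d\chi$ nonvanishing on it --- is precisely the one case where none of these difficulties exist. Your closing alternative (reading the estimate off the holonomic system satisfied by the Whittaker function) points toward a strategy that can be made to work, but as written it is a remark, not a proof: the uniformity in $u$ through a single continuous seminorm, the factor $\left\Vert \lambda\right\Vert _{\sigma}$, and the explicit polynomial dependence on $\nu$ would all still have to be extracted.
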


\begin{theorem}
\label{J-estimate}Let $\omega\subset\mathfrak{a}_{P}^{\ast}$ be a compact set
and let $u\in(I_{\sigma}^{\infty})_{K}$then there exist $r>0$ , C$_{u} $, and
$d$ such that
\[
\left\vert J(P,\sigma,i\nu)(\lambda)(\pi_{\bar{P},\sigma,i\nu}%
(ak)u)\right\vert \leq C_{u}\left\Vert \lambda\right\Vert _{\sigma}a^{\rho
_{o}}(1+\left\Vert \log a\right\Vert )^{d}%
\]
for $\nu\in\mathfrak{\omega}$ and $a\in A_{o},k\in K$.
\end{theorem}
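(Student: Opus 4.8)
The plan is to reduce everything to the behaviour of the defining integral of $J(P,\sigma,i\nu)(\lambda)$ under $A_o$-translation and to isolate the factor $a^{\rho_o}$ by pulling the split part of $a$ through the induced function. Using the Iwasawa decomposition $G=N_oA_oK$ and the fact that $J(P,\sigma,i\nu)(\lambda)\in Wh_\chi(H_{\bar P,\sigma,i\nu}^\infty)$ transforms by $\chi$ under the left $N_o$-action, it suffices to estimate $W_\nu(ak)=J(P,\sigma,i\nu)(\lambda)(\pi_{\bar P,\sigma,i\nu}(ak)u)$ for $a\in A_o$, $k\in K$. Decompose $a=a_Pa'$ with $a_P\in A_P$ and $a'\in A_o\cap{}^oM_P$, so that $\rho_o=\rho_P|_{\mathfrak a_P}+\rho^{M_P}$ under $\mathfrak a_o=\mathfrak a_P\oplus(\mathfrak a_o\cap{}^o\mathfrak m_P)$ and $a^{\rho_o}=a_P^{\rho_P}(a')^{\rho^{M_P}}$; the two factors will be estimated separately.

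For the $A_P$-direction I would start from
\[
W_\nu(a_Pk)=\int_{N_P}\chi(n)^{-1}\lambda\big(({}_{\bar P}u)(n\,a_Pk)\big)\,dn,
\]
valid in the convergence cone $(\mathfrak a_P^*)_{\mathbb C}^-$ and continued by Theorem \ref{Isomorphism}. Substituting $n=a_Pn_1a_P^{-1}$ produces $dn=a_P^{2\rho_P}dn_1$, and the left $A_P$-equivariance of ${}_{\bar P}u$ (with $A_P\subset M_P$ and the normalization $\rho_{\bar P}=-\rho_P$ that makes $\pi_{\bar P,\sigma,i\nu}$ unitary for real $\nu$) pulls out a factor $a_P^{i\nu+\rho_{\bar P}}$. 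The Jacobian and the cocycle combine to a prefactor of modulus $a_P^{\rho_P}$, and the remaining integrand differs from the original only by the unit-modulus twist $\chi(a_Pn_1a_P^{-1})^{-1}$. This is the mechanism that yields exactly $a_P^{\rho_P}$ with no accompanying power of $\|\log a_P\|$.

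For the ${}^oM_P$-direction the integral reduces to a Whittaker coefficient of the square-integrable representation $\sigma$ on ${}^oM_P$. Using the inner-product identity for $Wh_{\chi|_{N_o\cap M_P}}(H_\sigma^\infty)$ recorded before Theorem \ref{Isomorphism}, together with the discrete-series matrix-coefficient bound and the inequality $(\ast)$ applied on ${}^oM_P$, I expect a bound of the form $(a')^{\rho^{M_P}}(1+\|\log a'\|)^d\|\lambda\|_\sigma$; the polynomial factor is the sole source of $(1+\|\log a\|)^d$ and disappears when $P$ is minimal, since then ${}^oM_P$ is compact. Combining the two estimates via $a^{\rho_o}=a_P^{\rho_P}(a')^{\rho^{M_P}}$, and absorbing the $k$-dependence (which ranges over the compact $K$ and over the fixed finite set of $K$-types of $u$), gives the asserted bound $C_u\|\lambda\|_\sigma\,a^{\rho_o}(1+\|\log a\|)^d$ uniformly in $k$.

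The main obstacle is the unitary axis: for $\nu$ real the $N_P$-integral is only conditionally convergent (obtained by continuation), so the substitution is literally valid only in the cone and the modulus bound $\int_{N_P}|\lambda(({}_{\bar P}u)(n_1k))|\,dn_1$ diverges. I would treat the pull-out as an identity of weakly holomorphic functions on $(\mathfrak a_P^*)_{\mathbb C}^-$ and then invoke the uniform estimate of Theorem \ref{Tempered-Estimate} — applied to the \emph{fixed} vector $\pi_{\bar P,\sigma,i\nu}(k)u$ and to the rescaled generic character $n_1\mapsto\chi(a_Pn_1a_P^{-1})$, rather than to the growing vector $\pi_{\bar P,\sigma,i\nu}(ak)u$ — to control the continued residual term and to make the dependence on $\nu\in\omega$ uniform, the factor $(1+\|\nu\|)^m$ being bounded on the compact $\omega$. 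Genericity of $\chi$ is what keeps the twisted character of Jacquet type, and square-integrability of $\sigma$ is what supplies the $\rho^{M_P}$-decay pinning down the leading exponent. I emphasize that a direct application of Theorem \ref{Tempered-Estimate} to $\pi_{\bar P,\sigma,i\nu}(ak)u$ is too lossy: the continuous seminorm $\gamma_1$ necessarily involves $K$-Sobolev derivatives (as in the proof of Proposition \ref{tempered-CK}), and $\gamma_1(\pi_{\bar P,\sigma,i\nu}(ak)u)$ grows like a strictly higher power of $a^{\rho_o}$, so the sharp exponent $\rho_o$ genuinely requires the pull-out rather than a crude seminorm bound.
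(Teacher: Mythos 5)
First, a point of comparison: the paper itself gives no proof of this theorem --- it is recalled verbatim from \cite{WPT} (Theorems 43 and 45) --- so your proposal must stand on its own merits, and it contains a genuine gap at its central step. Your $A_P$-reduction asserts that after the substitution $n\mapsto a_Pn_1a_P^{-1}$ the Jacobian $a_P^{2\rho_P}$ and the left $\bar{P}$-equivariance factor $a_P^{i\nu-\rho_P}$ produce a prefactor of modulus exactly $a_P^{\rho_P}$, while ``the remaining integrand differs from the original only by the unit-modulus twist $\chi(a_Pn_1a_P^{-1})^{-1}$,'' hence is bounded uniformly in $a_P$. That uniformity is false, and its failure is precisely what the factor $(1+\|\log a\|)^d$ exists to absorb. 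On the unitary axis the twisted integral exists only by holomorphic continuation, and the continuation bounds depend on the twisting character: as $a_P$ goes to infinity in directions contracting the root spaces in $\mathfrak{n}_P$, the character $n\mapsto\chi(a_Pna_P^{-1})$ degenerates toward the trivial character, and the twisted functional degenerates toward the standard intertwining integral, whose continuation blows up at points such as $\nu=0$ (where $\mu(\sigma,\nu)$ vanishes). Concretely, for $G=SL_2(\mathbb{R})$, $P=P_o$ minimal, spherical principal series, the Whittaker function is essentially $y^{1/2}K_{i\nu}(2\pi y)$: at $\nu=0$ it grows like $a^{\rho_o}(1+\|\log a\|)$ in the degenerating direction (since $K_0(y)\sim-\log y$ as $y\to0$), and for $\nu\neq0$ the bound $|K_{i\nu}(y)|\leq C_\nu$ has $C_\nu\sim1/|\nu|$, so no uniform bound without the logarithm exists on a compact $\omega\ni0$. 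Since ${}^oM_{P_o}$ is compact, your proposal predicts the clean bound $C_ua^{\rho_o}$ there --- you say explicitly that the logarithmic factor ``disappears when $P$ is minimal'' --- and this is contradicted. The factor $(1+\|\log a\|)^d$ originates in the $A_P$-direction, from the degeneration of the conjugated character, not (only) in the ${}^oM_P$-direction.

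Your proposed repair --- invoking Theorem \ref{Tempered-Estimate} for the rescaled character applied to the fixed vector $\pi_{\bar{P},\sigma,i\nu}(k)u$ --- does not close this gap, because that theorem is stated for a single fixed generic character: both the exponent $m$ and the seminorm $\gamma_1$ depend on $\chi$, and nothing in its statement (or in your sketch) controls how they grow as the character is rescaled by $\mathrm{Ad}(a_P)$. Establishing that this dependence is at worst polynomial in $\|\log a_P\|$, uniformly for $\nu$ in compacta, is exactly the analytic content of Theorem 45 of \cite{WPT}; assuming it amounts to assuming the theorem being proved. The softer parts of your outline (the decomposition $a=a_Pa'$ with $\rho_o=\rho_P+\rho^{M_P}$, and the reduction of the $a'$-direction to Whittaker coefficients of the square-integrable $\sigma$) are reasonable, though even there the $N_P$-integral and the ${}^oM_P$-coefficient do not separate as cleanly as you indicate, since $a'$ sits inside the twisted integrand.
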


The Jacquet integrals that came into the Whittaker Plancherel Formula in
\cite{WPT} were $J_{\chi^{-1}}$ not $J_{\chi}$ the following (trivial) lemma
(which will be used in the direct integral form of the\ Whittaker Plancherel
Theorem) is part of an explanation of this (Recall $(L_{n}\varphi
)(g)=\varphi(n^{-1}g)$ and $\pi_{\bar{P},\sigma,i\nu}(L_{n}\varphi)=\pi
_{\bar{P},\sigma,i\nu}(n)\pi_{\bar{P},\sigma,i\nu}(\varphi)$.)

\begin{lemma}
\label{observation}$\varphi\in\mathcal{C}(G)$, $\lambda\in Wh_{\chi
_{|N_{o}\cap M_{P}}^{-1}}(H_{\sigma}^{\infty}),u\in H_{[P],\sigma}^{\infty}$
$,\nu\in\mathfrak{a}^{\ast}$ and $n\in N_{o}$ then%
\[
J_{\chi^{-1}}(P,\sigma.\iota\nu(\lambda)(\pi_{\bar{P},\sigma.i\nu}((L_{n^{-1}%
}-\chi(n))\varphi)u)u)=0.
\]

\end{lemma}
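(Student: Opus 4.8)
The plan is to reduce the claim to two facts already available: the convolution identity $\pi_{\bar P,\sigma,i\nu}(L_n\psi)=\pi_{\bar P,\sigma,i\nu}(n)\pi_{\bar P,\sigma,i\nu}(\psi)$ recalled just before the statement, and the $\chi^{-1}$--Whittaker equivariance of the Jacquet functional. Write $\mu=J_{\chi^{-1}}(P,\sigma,i\nu)(\lambda)$. Since $\chi^{-1}$ is again a generic character of $N_o$, Theorem \ref{Isomorphism} (with $\chi$ replaced by $\chi^{-1}$) shows $\mu\in Wh_{\chi^{-1}}(H_{\bar P,\sigma,i\nu}^\infty)$, so that $\mu(\pi_{\bar P,\sigma,i\nu}(m)v)=\chi^{-1}(m)\mu(v)$ for all $m\in N_o$ and all smooth vectors $v$.

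Before applying $\mu$ I would verify that its argument is a smooth vector. As $\varphi\in\mathcal C(G)$, the operator $\pi_{\bar P,\sigma,i\nu}(\varphi)$ lies in $\mathcal S(H_{[P],\sigma})$ by the estimates of the previous section, so it carries $H_{[P],\sigma}$ into the $K$--smooth (equivalently, by \cite{CWParam} Proposition A.2, the $G$--smooth) vectors. Hence $w:=\pi_{\bar P,\sigma,i\nu}(\varphi)u\in H_{\bar P,\sigma,i\nu}^\infty$ and $\mu(w)$ makes sense.

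Now I would expand the argument using the convolution identity with $n$ replaced by $n^{-1}$. Because $(L_{n^{-1}}-\chi(n))\varphi=L_{n^{-1}}\varphi-\chi(n)\varphi$,
\[
\pi_{\bar P,\sigma,i\nu}\big((L_{n^{-1}}-\chi(n))\varphi\big)u=\pi_{\bar P,\sigma,i\nu}(n^{-1})w-\chi(n)\,w .
\]
Applying $\mu$ and invoking its $\chi^{-1}$--equivariance on the first term gives
\[
\mu\big(\pi_{\bar P,\sigma,i\nu}(n^{-1})w\big)-\chi(n)\mu(w)=\chi^{-1}(n^{-1})\mu(w)-\chi(n)\mu(w),
\]
which vanishes by the elementary identity $\chi^{-1}(n^{-1})=\chi(n)$.

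There is no genuine obstacle, as the designation of the lemma as trivial already signals; the only care needed is bookkeeping. One must confirm that $i\nu\in i\mathfrak a_P^\ast$ places us in the regime where $\mu$ truly obeys the Whittaker transformation law of Theorem \ref{Isomorphism}, and that $w$ lands in $H_{\bar P,\sigma,i\nu}^\infty$ so that $\mu$ is defined on it. Once these are in place the cancellation is forced by the exponent matching $\chi^{-1}(n^{-1})=\chi(n)$; this same matching is precisely what, in the Whittaker Plancherel formula, accounts for the appearance of $\chi^{-1}$ rather than $\chi$.
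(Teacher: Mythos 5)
Your proof is correct and is essentially the paper's own argument: both rest on the identity $\pi_{\bar{P},\sigma,i\nu}(L_{n^{-1}}\varphi)=\pi_{\bar{P},\sigma,i\nu}(n)^{-1}\pi_{\bar{P},\sigma,i\nu}(\varphi)$ together with the $\chi^{-1}$--equivariance of $J_{\chi^{-1}}(P,\sigma,i\nu)(\lambda)$, so that the first term picks up the factor $\chi^{-1}(n^{-1})=\chi(n)$ and cancels the second. The additional bookkeeping you supply (that $\pi_{\bar{P},\sigma,i\nu}(\varphi)u$ is a smooth vector, and that Theorem \ref{Isomorphism} applied to $\chi^{-1}$ puts the Jacquet functional in $Wh_{\chi^{-1}}(H_{\bar{P},\sigma,i\nu}^{\infty})$) is left implicit in the paper but is accurate.
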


\begin{proof}%
\[
(\pi_{\bar{P},\sigma.i\nu}(L_{n^{-1}}\varphi)u)=\pi_{\bar{P},\sigma.i\nu
}(n)^{-1}\pi_{\bar{P},\sigma.i\nu}(\varphi)u
\]
so%
\[
J_{\chi^{-1}}(P,\sigma.\iota\nu(\lambda)(\pi_{\bar{P},\sigma.i\nu}(L_{n^{-1}%
}\varphi)u)=\chi(n)J_{\chi^{-1}}(P,\sigma.\iota\nu(\lambda)(\pi_{\bar
{P},\sigma.i\nu}(\varphi)u.
\]

\end{proof}

Fix once and for all a Haar measure on $N_{o}$ and thus a right invariant
measure on $N_{o}\backslash G$. \ Recall that the Whittaker Schwartz space,
$\mathcal{C}(N_{o}\backslash G,\chi)$, for $\chi$ a regular unitary character
of $N_{o}$ , is the set of elements $f\in C^{\infty}(G)$ such that
\[
q_{d,x}(f)=\sup_{g\in G}(a(g)^{-\rho_{o}}(1+\log\left\Vert a(g)\right\Vert
)^{d}|xf(g)|<\infty
\]
for all $x\in U(\mathfrak{g})$ and $d\in\mathbb{Z}$ endowed with the topology
defined by the seminorms $q_{d,x}$.

The following is Proposition 14 in \cite{WPT}

\begin{proposition}
\label{Fourier}If $f\in\mathcal{C}(G)$ then the the integral
\[
f_{\chi}(g)=\int_{N_{o}}\chi(n)^{-1}f(ng)dn
\]
converges absolutely and uniformly on compacta in $g$ to an element of
$\mathcal{C(}N_{o}\backslash G;\chi)$. Furthermore, the map defined by
$T_{\chi}(f)=f_{\chi}$ is a continuous map from $\mathcal{C}(G)$ to
$\mathcal{C(}N_{o}\backslash G;\chi)$.
\end{proposition}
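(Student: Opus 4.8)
The plan is to reduce the entire statement to one integral estimate over $N_{o}$, and to derive that estimate from the pointwise bound on Schwartz functions recorded above together with Harish-Chandra's estimates on $\Xi$. Putting $x=y=1$ in the seminorms $p_{x,y,k}$ gives
\[
|f(g)|\le p_{1,1,k}(f)(1+\log\|g\|)^{-k}|g|^{-\frac{1}{2}}\le p_{1,1,k}(f)(1+\log\|g\|)^{-k}\Xi(g),
\]
the second inequality being the lower bound in $(\ast)$. Since $|\chi(n)^{-1}|=1$, absolute and locally uniform convergence of $f_{\chi}(g)$ will follow once we know that $\int_{N_{o}}(1+\log\|ng\|)^{-k}\Xi(ng)\,dn<\infty$ uniformly for $g$ in compacta and $k$ large; this is the integrability assertion isolated below. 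Granting it, applying the same bound to $R_{y}f$ shows that the integrals defining the right derivatives converge locally uniformly, so $f_{\chi}\in C^{\infty}(G)$ and $R_{x}f_{\chi}=(R_{x}f)_{\chi}$ for $x\in U(\mathfrak{g})$, because the right-invariant derivative $R_{x}$ commutes with the left translations $L_{n}$ under the integral. Finally the substitution $n\mapsto nn_{0}^{-1}$, using left invariance of Haar measure on $N_{o}$ and that $\chi$ is a character, gives $f_{\chi}(n_{0}g)=\chi(n_{0})f_{\chi}(g)$; thus $f_{\chi}$ has the correct covariance and only the seminorms $q_{d,x}$ remain.

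First I would reduce $q_{d,x}(f_{\chi})$ to a single integral over $N_{o}$. Since $R_{x}f_{\chi}=(R_{x}f)_{\chi}$ and $R_{x}$ maps $\mathcal{C}(G)$ continuously into itself, it suffices to bound $q_{d,1}(F_{\chi})$ for $F\in\mathcal{C}(G)$ by a continuous seminorm of $F$. Writing $g=na(g)k$ in the Iwasawa decomposition $G=N_{o}A_{o}K$, the covariance gives $|F_{\chi}(g)|=|F_{\chi}(a(g)k)|$, so
\[
q_{d,1}(F_{\chi})=\sup_{a\in A_{o},\,k\in K}a^{-\rho_{o}}(1+\log\|a\|)^{d}\,|F_{\chi}(ak)|.
\]
Both $\|\cdot\|$ and $|\cdot|$ are bi-$K$-invariant, since $Ad(K)$ is orthogonal for $\langle\,,\,\rangle$ and hence $\wedge^{d}Ad(k)$ preserves operator norms; therefore the pointwise bound yields $|F(nak)|\le p_{1,1,k'}(F)(1+\log\|na\|)^{-k'}|na|^{-\frac{1}{2}}$ with no dependence on $k$, whence
\[
|F_{\chi}(ak)|\le p_{1,1,k'}(F)\int_{N_{o}}(1+\log\|na\|)^{-k'}|na|^{-\frac{1}{2}}\,dn.
\]
Everything now comes down to proving that for each $d$ there exist $k'$ and $C$ with
\[
a^{-\rho_{o}}(1+\log\|a\|)^{d}\int_{N_{o}}(1+\log\|na\|)^{-k'}|na|^{-\frac{1}{2}}\,dn\le C\qquad(a\in A_{o}).
\]

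This last inequality is the heart of the matter and the step I expect to be the main obstacle. I would majorize $|na|^{-\frac{1}{2}}\le\Xi(na)$ using $(\ast)$ and then perform the change of variables $n=ama^{-1}$, $m\in N_{o}$, under which $na=am$ and $dn=a^{2\rho_{o}}\,dm$; this turns the integral into $a^{2\rho_{o}}\int_{N_{o}}(1+\log\|am\|)^{-k'}\Xi(am)\,dm$, so it remains to show that this integral is bounded by $C\,a^{-\rho_{o}}(1+\log\|a\|)^{-d}$. The two ingredients are the convergence of $\int_{N_{o}}(1+\log\|m\|)^{-k'}\Xi(m)\,dm$ for $k'$ large (the case $a=e$) and Harish-Chandra's sharp asymptotics for $\Xi$ along $A_{o}N_{o}$, which supply both the gain of the factor $a^{-\rho_{o}}$ and, after comparing $\log\|am\|$ with $\log\|a\|$ and spending part of the exponent $k'$, the polynomial decay $(1+\log\|a\|)^{-d}$; the general $a$ is reduced to the dominant chamber by the Weyl group. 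Carrying out these majorizations uniformly in $a$, with explicit bookkeeping of the logarithmic powers, is the delicate technical core, and it is exactly the content of the estimates developed in \cite{RRGI} Chapter 4 and \cite{WPT}. Once this inequality is established, combining the three displays gives $q_{d,x}(f_{\chi})\le C\,p_{1,1,k'}(R_{x}f)$, a continuous seminorm of $f$; this simultaneously shows $f_{\chi}\in\mathcal{C}(N_{o}\backslash G,\chi)$ and that $T_{\chi}$ is continuous.
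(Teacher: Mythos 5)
The paper records no proof of this proposition at all: it is quoted as Proposition 14 of \cite{WPT}. Your reduction is, in substance, the standard argument (and the one used in the cited source), and the steps you carry out are correct: the covariance $f_{\chi}(n_{0}g)=\chi(n_{0})f_{\chi}(g)$, the identity $R_{x}f_{\chi}=(R_{x}f)_{\chi}$ justified by locally uniform convergence of the dominating integrals, the bi-$K$-invariance of $\left\vert \cdot\right\vert $ and $\left\Vert \cdot\right\Vert $, and the resulting reduction of the entire statement to the single estimate
\[
\int_{N_{o}}\Xi(na)\,(1+\log\left\Vert na\right\Vert )^{-k^{\prime}}\,dn\leq C\,a^{\rho_{o}}(1+\log\left\Vert a\right\Vert )^{-d},\qquad a\in A_{o}.
\]
This estimate is a genuine theorem of Harish-Chandra: it is exactly the estimate on which the constant term map $\mathcal{C}(G)\rightarrow\mathcal{C}(M_{o}A_{o})$ rests (\cite{RRGI} Section 4.5, \cite{HCI}), and, as your argument implicitly shows, once absolute values are placed inside the integral the character $\chi$ plays no role, so the Whittaker statement needs nothing beyond the $\chi=1$ case. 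As a reduction-plus-citation your proof is therefore sound, and is more detailed than what the paper itself provides.

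The one point that must be corrected is your sketch of how the key estimate would be proved, specifically the clause that ``the general $a$ is reduced to the dominant chamber by the Weyl group.'' That step would fail if carried out. The integral is over the fixed group $N_{o}$, which is not normalized by representatives of Weyl group elements: writing $a=\dot{w}a^{+}\dot{w}^{-1}$ with $a^{+}$ dominant and using bi-$K$-invariance of $\Xi$ and $\left\Vert \cdot\right\Vert $ converts the integral into one over $\dot{w}^{-1}N_{o}\dot{w}$, a unipotent group built from both positive and negative root groups; so the dominant-chamber case of your estimate does not formally imply the general case, it only trades it for estimates of integrals over other unipotent groups. Already for $G=SL(2,\mathbb{R})$ the opposite chamber is equivalent (via $\theta$) to an integral over $\bar{N}_{o}$ against dominant $a$, whose size is $a^{-\rho_{o}}$-like rather than $a^{\rho_{o}}$-like, i.e.\ a different estimate requiring its own proof. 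The uniformity over \emph{all} of $A_{o}$ --- which you truly need, since $a(g)$ runs over every chamber in the seminorms $q_{d,x}$ --- is precisely the hard content of Harish-Chandra's theorem, obtained there by induction and careful decompositions rather than by Weyl symmetry. Since you defer this step to \cite{RRGI} and \cite{WPT} anyway, this misstatement does not invalidate the proof; but the parenthetical strategy should be deleted and replaced by the citation alone.
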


\begin{lemma}
\label{Basic}If $\varphi\in\mathcal{C}(G),\sigma$ is a square integrable,
irreducible representation of ${}^{o}M_{P}$, $\lambda\in Wh_{\chi_{|N_{o}\cap
M_{P}}^{-1}}(H_{\sigma}^{\infty})$ and $\nu\in\mathfrak{a}_{P}^{\ast}$, $u\in
H_{[P],\sigma}^{\infty}$
\[
J_{\chi^{-1}}(P,\sigma,\iota\nu)(\lambda)(\pi_{\bar{P},\sigma,i\nu}%
(\varphi)u)=\int_{N_{o}\backslash G}J_{\chi^{-1}}(P,\sigma,\iota\nu
)(\lambda)(\pi_{\bar{P},\sigma,i\nu}(g)u)\varphi_{\chi}(g)d\bar{g}.
\]

\end{lemma}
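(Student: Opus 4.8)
The plan is to unfold the definition of $\pi_{\bar P,\sigma,i\nu}(\varphi)u$ as a vector-valued integral over $G$ and then push $J_{\chi^{-1}}(P,\sigma,\iota\nu)(\lambda)$ through the integral, finally exploiting the $N_o$-equivariance encapsulated in Lemma \ref{observation} to collapse the $G$-integral to an integral over $N_o\backslash G$ against the fibre average $\varphi_\chi$. First I would write
\[
\pi_{\bar P,\sigma,i\nu}(\varphi)u=\int_{G}\varphi(g)\,\pi_{\bar P,\sigma,i\nu}(g)u\,dg,
\]
an integral converging in the Fr\'echet space $H^{\infty}_{[P],\sigma}$ since $\varphi\in\mathcal C(G)$ and $\pi_{\bar P,\sigma,i\nu}$ acts smoothly there. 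Because $J_{\chi^{-1}}(P,\sigma,\iota\nu)(\lambda)$ is a continuous functional on $H^{\infty}_{[P],\sigma}$ (it lies in $Wh_\chi(H^{\infty}_{\bar P,\sigma,i\nu})$ by the discussion preceding Theorem \ref{Isomorphism}, and the estimate in Theorem \ref{Tempered-Estimate} gives the required continuity), I may interchange the functional with the integral to obtain
\[
J_{\chi^{-1}}(P,\sigma,\iota\nu)(\lambda)(\pi_{\bar P,\sigma,i\nu}(\varphi)u)
=\int_{G}\varphi(g)\,J_{\chi^{-1}}(P,\sigma,\iota\nu)(\lambda)(\pi_{\bar P,\sigma,i\nu}(g)u)\,dg.
\]

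Next I would organize the $G$-integral via the fibration $N_o\backslash G$ with the fixed quotient measure, writing $\int_G=\int_{N_o\backslash G}\int_{N_o}$. The inner integrand is $\varphi(ng)\,J_{\chi^{-1}}(P,\sigma,\iota\nu)(\lambda)(\pi_{\bar P,\sigma,i\nu}(ng)u)$. The key point is the $\chi$-equivariance of the Jacquet functional: since $J_{\chi^{-1}}(P,\sigma,\iota\nu)(\lambda)\in Wh_\chi(H^{\infty}_{\bar P,\sigma,i\nu})$, one has
\[
J_{\chi^{-1}}(P,\sigma,\iota\nu)(\lambda)(\pi_{\bar P,\sigma,i\nu}(ng)u)
=\chi(n)\,J_{\chi^{-1}}(P,\sigma,\iota\nu)(\lambda)(\pi_{\bar P,\sigma,i\nu}(g)u)
\]
for $n\in N_o$. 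Substituting this, the inner $N_o$-integral factors as
\[
\Bigl(\int_{N_o}\chi(n)^{-1}\varphi(ng)\,dn\Bigr)\,J_{\chi^{-1}}(P,\sigma,\iota\nu)(\lambda)(\pi_{\bar P,\sigma,i\nu}(g)u),
\]
and the parenthesized factor is exactly $\varphi_\chi(g)$ by the definition in Proposition \ref{Fourier}. Assembling these yields the claimed identity.

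I expect the main obstacle to be the rigorous justification of the two interchanges — pulling the continuous functional inside the $H^\infty$-valued integral, and applying Fubini to split off the $N_o$-integration. For the first, continuity of $J_{\chi^{-1}}(P,\sigma,\iota\nu)(\lambda)$ against a continuous seminorm on $H^{\infty}_{[P],\sigma}$ (Theorem \ref{Tempered-Estimate}) together with the convergence of the vector-valued integral suffices. For Fubini, I would invoke the absolute convergence of $f_\chi$ established in Proposition \ref{Fourier} applied to the Schwartz function $\varphi$, combined with the polynomial bound on $J_{\chi^{-1}}(P,\sigma,\iota\nu)(\lambda)(\pi_{\bar P,\sigma,i\nu}(g)u)$ furnished by Theorem \ref{J-estimate}; the latter controls the growth of the integrand along $A_o$ so that the iterated integral is absolutely convergent and the reordering is legitimate. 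With these two justifications in place the computation is otherwise a direct substitution.
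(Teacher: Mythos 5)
Your plan takes a genuinely different route from the paper's proof and, after one correction, it is viable. The paper does not argue directly at imaginary parameter: it first uses Theorem \ref{J-estimate} and Proposition \ref{Fourier} to show both sides are continuous in $\varphi$, reducing to $\varphi\in C_{c}^{\infty}(G)$; it then uses holomorphy in $\nu$ (for such $\varphi$) to reduce to $\nu$ in the half-space $\left(\mathfrak{a}_{P}^{\ast}\right)_{\mathbb{C}}^{-}$, where $J_{\chi^{-1}}(P,\sigma,\nu)(\lambda)$ is an honest convergent integral over $N_{P}$ and $\lambda$ itself is given by an integral over $N_{o}\cap M_{P}$ (\cite{WPT} Theorem 37); the identity is then verified there by an explicit iterated-integral computation. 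You stay at $\nu\in\mathfrak{a}_{P}^{\ast}$ and replace all of this by a single abstract input --- the analytically continued functional is still $N_{o}$-equivariant --- plus Fubini justified by Theorem \ref{J-estimate} and Proposition \ref{Fourier}. That is a real economy (no holomorphy step, no explicit formula for $\lambda$), but two points need care. First, the citation: the discussion preceding Theorem \ref{Isomorphism} gives Whittaker-space membership only for $\nu$ in the half-space of convergence, which excludes imaginary $\nu$; at $i\nu$ the equivariance is precisely the assertion of Theorem \ref{Isomorphism} itself, so that is what you must quote. Second, Theorem \ref{J-estimate} is stated for $K$-finite $u$, so your Fubini justification covers $u\in\left(H_{[P],\sigma}^{\infty}\right)_{K}$, and general smooth $u$ requires a continuity-in-$u$ extension (a caveat the paper's own proof shares).

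The one concrete error: you claim $J_{\chi^{-1}}(P,\sigma,i\nu)(\lambda)\in Wh_{\chi}(H_{\bar{P},\sigma,i\nu}^{\infty})$ and deduce $J_{\chi^{-1}}(P,\sigma,i\nu)(\lambda)(\pi_{\bar{P},\sigma,i\nu}(ng)u)=\chi(n)J_{\chi^{-1}}(P,\sigma,i\nu)(\lambda)(\pi_{\bar{P},\sigma,i\nu}(g)u)$. With the paper's conventions ($J_{\chi}(\cdots)(\lambda)\in Wh_{\chi}$), the functional $J_{\chi^{-1}}(P,\sigma,i\nu)(\lambda)$ lies in $Wh_{\chi^{-1}}$, so the correct factor is $\chi(n)^{-1}$. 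As written, your two displayed steps are inconsistent: substituting your equivariance into the inner $N_{o}$-integral produces $\int_{N_{o}}\chi(n)\varphi(ng)dn=\varphi_{\chi^{-1}}(g)$, not $\varphi_{\chi}(g)$, and the resulting identity would pair $J_{\chi^{-1}}$ against $\varphi_{\chi^{-1}}$ --- a pairing that is not even well defined on $N_{o}\backslash G$, since then both factors transform under left translation by $\chi^{-1}$ and their product is not $N_{o}$-invariant. With the corrected factor $\chi(n)^{-1}$ the inner integral is exactly $\varphi_{\chi}(g)\,J_{\chi^{-1}}(P,\sigma,i\nu)(\lambda)(\pi_{\bar{P},\sigma,i\nu}(g)u)$, the integrand does descend to $N_{o}\backslash G$, and your computation closes. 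This $\chi$ versus $\chi^{-1}$ bookkeeping is precisely the subtlety the paper emphasizes (see the introduction and Lemma \ref{observation}, which encodes the same $\chi(n)^{-1}$ convention), so it should be repaired rather than dismissed as a typo.
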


\begin{proof}
Using Theorem \ref{J-estimate} and Proposition \ref{Fourier} we see that both
sides of the equality that we are proving are continuous in $\varphi$. It is
therefore enough to prove the result for $\varphi\in C_{c}^{\infty}(G)$. For
such $\varphi$, the function%
\[
\nu\mapsto J_{\chi^{-1}}(P,\sigma,\nu)(\lambda)(\pi_{\bar{P},\sigma,\nu
}(\varphi)u)
\]
and
\[
\nu\mapsto\int_{N_{o}\backslash G}J_{\chi^{-1}}(P,\sigma,\nu)(\lambda
)(\pi_{\bar{P},\sigma,\nu}(g)u)\varphi_{\chi}(g)d\bar{g}%
\]
are holomorphic in $\nu$ on $\left(  \mathfrak{a}_{P}^{\ast}\right)
_{\mathbb{C}}$ \ Thus, it is enough to prove the result for $\nu\in\left(
\mathfrak{a}_{P}^{\ast}\right)  _{\mathbb{C}}^{-}$. We also note that there
exists $z\in\left(  H_{\sigma}\right)  _{K\cap M_{P}}$ such that
\[
\lambda(w)=\int_{N_{o}\cap M_{P}}\chi(n_{1})\left\langle \sigma(n_{1}%
)w,z\right\rangle dn_{1}%
\]
(\cite{WPT} Theorem 37). For simplicity we will write
\[
{}_{\bar{P}}w_{\sigma,\nu}=w_{\nu}%
\]
We calculate, using standard integration formulas (c.f. Lemma 2.4.5
\cite{RRGI}),for $w\in H_{[P],\sigma}^{\infty}$
\[
J_{\chi^{-1}}(P,\sigma,\nu)(\lambda)(\pi_{\bar{P},\sigma,\nu}(\varphi
)u)=\int_{N_{P}}\chi(n)\lambda(\left(  \pi_{\bar{P},\sigma,\nu}(\varphi
)u\right)  _{\nu}(n))dn
\]%
\[
=\int_{N_{P}\times N_{o}\cap M_{P}}\chi(n)\chi(n_{1})\left\langle \sigma
(n_{1})\left(  \pi_{\bar{P},\sigma,\nu}(\varphi)u\right)  _{\nu}%
(n)),z\right\rangle dn
\]%
\[
=\int_{N_{P}\times N_{o}\cap M_{P}}\chi(n)\chi(n_{1})\left\langle \left(
\pi_{\bar{P},\sigma,\nu}(\varphi)u\right)  _{\nu}(n_{1}n)),z\right\rangle
dn_{1}dn
\]%
\[
=\int_{N_{o}}\chi(n_{o})\left\langle \left(  \pi_{\bar{P},\sigma,\nu}%
(\varphi)u\right)  _{\nu}(n_{o})),z\right\rangle dn_{o}%
\]
(here we have chosen $dn_{o}$ so that it is the push forward of $dn_{1}dn$ on
$N_{o}\cap M_{P}\times N_{P}$)%
\[
=\int_{N_{o}}\chi(n_{o})\left\langle \int_{N_{o}\times A_{o}\times K}%
a^{-2\rho_{o}}\varphi(nak)u_{\nu}(n_{o}nak)dndadk,z\right\rangle dn_{o}%
\]
since $\varphi$ has compact support we can integrate this as a quadruple
integral in any order. We integrate over $n_{o}$ first an get%
\[
\int_{N_{o}\times N_{o}\times A_{o}\times K}\chi(n_{o}n^{-1})\varphi
(nak)a^{-2\rho_{o}}\left\langle u_{\nu}(n_{o}ak),z\right\rangle dndadkdn_{o}.
\]
We next integrate over $n$ and get%
\[
\int_{N_{o}\times A_{o}\times K}\chi(n_{o})\varphi_{\chi}(ak)a^{-2\rho_{o}%
}\left\langle u_{\nu}(n_{o}ak),z\right\rangle dn_{o}dadk
\]%
\[
\int_{A_{o}\times K}a^{-2\rho_{o}}\varphi_{\chi}(ak)\int_{N_{o}}\chi
(n_{o})\left\langle u_{\nu}(n_{o}ak),z\right\rangle dn_{o}dadk
\]%
\[
=\int_{A_{o}\times K}a^{-2\rho_{o}}\varphi_{\chi}(ak)J_{\chi^{-1}}(\bar
{P},\sigma,\nu)(\lambda)(\pi_{\bar{P},\sigma,\nu}(ak)u)ak)
\]%
\[
=\int_{N_{o}\backslash G}J_{\chi^{-1}}(P,\sigma,\nu)(\lambda)(\pi_{\bar
{P},\sigma,\nu}(g)u)\varphi_{\chi}(g)d\bar{g}%
\]
here we used the fact that up to normalization of measures
\[
\int_{N_{o}\backslash G}f(g)d\bar{g}=\int_{A_{o}\times K}a^{-2\rho_{o}%
}f(ak)dadk
\]
if $f(ng)=f(g),n\in N_{o},g\in G$.
\end{proof}

\section{A technical lemma and a key result\label{key-result}}

Before we set the Whittaker analog of the material in Section
\ref{G-Plancherel Space} we need some preliminary results. Let $H$ be a
Hilbert space with inner product $\left\langle ...,...\right\rangle $ and let
$H^{\prime}$ be the, continuous, dual space with the dual inner product. If
$v\in H$ then let $v^{\#}\in H^{\prime}$ be defined by $v^{\#}(x)=\left\langle
x,v\right\rangle .$ Then the map $v\mapsto v^{\#}$ is a conjugate linear
isometry of $H$ onto $H^{\prime}$. If $H_{1}$ and $H_{2}$ are Hilbert spaces
and $T\ $\ is a continuous linear map from \thinspace$H_{1}$ to $H_{2} $then
define $T^{t}$ as usual (if $\lambda\in H_{2}^{\prime}$, $T^{t}(\lambda
)=\lambda\circ T\in H_{1}^{\prime}$). The following is obvious

\begin{lemma}
If $v\in H_{2}$ then $T^{t}\bar{v}=\left(  T^{\ast}v\right)  ^{\#}$ here, as
usual, $T^{\ast}is$ the Hilbert Space adjoint of $T$.
\end{lemma}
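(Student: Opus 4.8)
The plan is to prove the identity by directly unwinding the definitions, since both sides are elements of $H_{1}^{\prime}$ and equality of elements of a dual space can be tested by evaluating against an arbitrary vector. First I would fix the interpretation of $\bar{v}$: for $v\in H_{2}$ the symbol $\bar{v}$ denotes the image $v^{\#}\in H_{2}^{\prime}$ of $v$ under the conjugate-linear isometry $w\mapsto w^{\#}$ described just above, so that $T^{t}\bar{v}$ is indeed defined and lies in $H_{1}^{\prime}$, as does $(T^{\ast}v)^{\#}$.

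I would then evaluate both functionals on an arbitrary $x\in H_{1}$. For the left-hand side, the definition of the transpose $T^{t}(\lambda)=\lambda\circ T$ together with the definition of $\#$ gives $(T^{t}\bar{v})(x)=\bar{v}(Tx)=\left\langle Tx,v\right\rangle _{H_{2}}$. For the right-hand side, the definition of $\#$ gives $(T^{\ast}v)^{\#}(x)=\left\langle x,T^{\ast}v\right\rangle _{H_{1}}$. The two are then identified by the defining property of the Hilbert-space adjoint, $\left\langle Tx,v\right\rangle _{H_{2}}=\left\langle x,T^{\ast}v\right\rangle _{H_{1}}$, which holds for all $x\in H_{1}$; this forces $T^{t}\bar{v}=(T^{\ast}v)^{\#}$ as elements of $H_{1}^{\prime}$.

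There is no genuine obstacle here --- $T$ is bounded and the computation is purely algebraic --- so the content is entirely bookkeeping. The one point I would keep an eye on is the conjugate-linearity of $w\mapsto w^{\#}$: both sides of the asserted identity are conjugate-linear in $v$ (on the left because $T^{t}$ is linear while $\#$ is conjugate-linear, on the right because $T^{\ast}$ is linear while $\#$ is conjugate-linear), so the definitions match up with no spurious complex conjugate appearing. This consistency is exactly what makes the statement, as the author notes, obvious.
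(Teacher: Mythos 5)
Your proof is correct: evaluating both functionals at an arbitrary $x\in H_{1}$ and invoking $\left\langle Tx,v\right\rangle =\left\langle x,T^{\ast}v\right\rangle$ is exactly the routine verification intended, and your reading of $\bar{v}$ as $v^{\#}\in H_{2}^{\prime}$ is the right one. The paper gives no proof at all (it declares the lemma obvious), so your write-up simply supplies the bookkeeping the author omitted.
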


Also identifying $(H^{\prime})^{\prime}$ with $H$ we have $v^{\#\#}=v.$

Keep the notation of Section \ref{G-Plancherel Space}. Let $\sigma
=\sigma_{\omega},\omega\in\mathcal{E}_{2}({}^{o}M_{P})$ and $\lambda
_{1},...,\lambda_{d}$ be an orthonormal basis of $Wh_{\chi_{|N_{o}\cap M_{P}}%
}(H_{\sigma}^{\infty})$ with respect to the inner product $(...,...)_{\sigma}$
in Corollary 38 of \cite{WPT}. Define for $\alpha\in C_{c}^{\infty
}(\mathfrak{a}^{\ast})$, $u\in I_{\sigma}$ and $\nu\in\mathfrak{a}_{P}^{\ast
}.$%
\[
\Lambda_{\chi}(\sigma,\alpha,u)(\nu)=\alpha(\nu)\sum_{i=1}^{d}J_{\chi
}(P,\sigma,i\nu)(\lambda_{i})(u)\lambda_{i}^{\#},\nu\in\mathfrak{a}^{\ast}%
\]
and%
\[
\Lambda_{\chi,\sigma}:C_{c}^{\infty}(\mathfrak{a}^{\ast})\otimes I_{\sigma
}\rightarrow C_{c}^{\infty}(\mathfrak{a}^{\ast},Wh_{\chi_{|N_{o}\cap M_{P}}%
}(H_{\sigma}^{\infty})^{\ast})
\]
by%
\[
\Lambda_{\chi,\sigma}(\alpha\otimes u)=\Lambda_{\chi}(\sigma,\alpha,u).
\]

The following result is, essentially, Lemma 15.8.6 in \cite{RRGII}.

\begin{lemma}
\label{surjective}The map $\Lambda_{\chi,\sigma}$ is surjective.
\end{lemma}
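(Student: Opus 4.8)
The plan is to reduce the statement to a pointwise linear-algebra fact supplied by Theorem \ref{Isomorphism} and then to globalize by a partition of unity on $\mathfrak{a}_P^{\ast}$. Writing a typical element of the target in the basis $\lambda_1^{\#},\dots,\lambda_d^{\#}$, an arbitrary $\beta\in C_c^{\infty}(\mathfrak{a}^{\ast},Wh_{\chi_{|N_o\cap M_P}}(H_\sigma^{\infty})^{\ast})$ has the form $\beta(\nu)=\sum_{i=1}^{d}\beta_i(\nu)\lambda_i^{\#}$ with $\beta_i\in C_c^{\infty}(\mathfrak{a}^{\ast})$. For $u\in I_\sigma^{\infty}$ I set
\[
\Phi_u(\nu)=\bigl(J_\chi(P,\sigma,i\nu)(\lambda_1)(u),\dots,J_\chi(P,\sigma,i\nu)(\lambda_d)(u)\bigr)\in\mathbb{C}^d ,
\]
so that, under this basis, $\Lambda_\chi(\sigma,\alpha,u)$ corresponds to $\alpha\,\Phi_u$. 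Thus it suffices to prove that the $C_c^{\infty}(\mathfrak{a}^{\ast})$-linear span of $\{\Phi_u:u\in I_\sigma^{\infty}\}$ is all of $C_c^{\infty}(\mathfrak{a}^{\ast},\mathbb{C}^d)$.

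First I would establish pointwise surjectivity. Fix $\nu_0\in\mathfrak{a}^{\ast}$. By Theorem \ref{Isomorphism} the map $J_\chi(P,\sigma,i\nu_0)$ is a bijection of $Wh_{\chi_{|N_o\cap M_P}}(H_\sigma^{\infty})$ onto $Wh_\chi(H_{\bar{P},\sigma,\nu_0}^{\infty})$, so the functionals $J_\chi(P,\sigma,i\nu_0)(\lambda_1),\dots,J_\chi(P,\sigma,i\nu_0)(\lambda_d)$ are linearly independent elements of $(I_\sigma^{\infty})'$. A finite family of linearly independent functionals has surjective joint evaluation, so $u\mapsto\Phi_u(\nu_0)$ maps $I_\sigma^{\infty}$ onto $\mathbb{C}^d$; in particular I may choose $u_1,\dots,u_d\in I_\sigma^{\infty}$ for which the matrix $M(\nu_0)=[\Phi_{u_1}(\nu_0)\,\cdots\,\Phi_{u_d}(\nu_0)]$ is invertible.

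The next step is to pass from a point to a neighborhood and patch. Since $J_\chi(P,\sigma,i\nu)(\lambda_i)$ depends weakly holomorphically on $\nu$ (Theorem \ref{Isomorphism}), each $\nu\mapsto\Phi_u(\nu)$ is real-analytic, hence the entries of $M(\nu)$ are smooth; invertibility being an open condition, $M(\nu)$ stays invertible on some neighborhood $U_{\nu_0}$, and there $\beta(\nu)=\sum_j c_j(\nu)\Phi_{u_j}(\nu)$ with $c(\nu)=M(\nu)^{-1}\beta(\nu)$ smooth. Covering the compact set $\operatorname{supp}\beta$ by finitely many such neighborhoods $U_1,\dots,U_N$ (each with attached vectors $u_j^{(l)}$ and smooth coefficient solutions $c_j^{(l)}$) and choosing a smooth partition of unity $\{\psi_l\}$ with $\psi_l\in C_c^{\infty}(U_l)$ and $\sum_l\psi_l\equiv1$ near $\operatorname{supp}\beta$, I obtain
\[
\beta=\sum_l\psi_l\beta=\sum_{l,j}\bigl(\psi_l c_j^{(l)}\bigr)\,\Phi_{u_j^{(l)}} ,
\]
where each $\psi_l c_j^{(l)}\in C_c^{\infty}(\mathfrak{a}^{\ast})$; invertibility of $M^{(l)}$ forces $c_j^{(l)}$ to vanish where $\beta$ does, so the identity holds on all of $\mathfrak{a}^{\ast}$. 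Translating back, $\beta=\Lambda_{\chi,\sigma}\bigl(\sum_{l,j}(\psi_l c_j^{(l)})\otimes u_j^{(l)}\bigr)$, which is exactly surjectivity.

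The genuine content is concentrated in the pointwise step: the bijectivity in Theorem \ref{Isomorphism} is precisely what guarantees that the $J_\chi(P,\sigma,i\nu)(\lambda_i)$ are independent and hence that $\Phi$ is pointwise onto. Everything after that is the routine principle that pointwise surjectivity of a smoothly varying finite system upgrades to $C_c^{\infty}$-module surjectivity via local inversion of $M(\nu)$ and a partition of unity. The only place that demands a little care is confirming that the coefficient functions produced by inverting $M(\nu)$ are genuinely $C^{\infty}$ and compactly supported, which is where the weakly holomorphic (hence, for $\nu\in\mathfrak{a}^{\ast}$, real-analytic) dependence of the Jacquet integrals on $\nu$ is used.
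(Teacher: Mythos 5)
Your proof is correct and follows essentially the same route as the paper's: both use the bijectivity in Theorem \ref{Isomorphism} to produce, at each $\nu_0$, vectors $u_1,\dots,u_d$ making the matrix $\bigl[J_{\chi}(P,\sigma,i\nu)(\lambda_i)(u_j)\bigr]$ invertible, then exploit openness of invertibility together with a partition of unity subordinate to a finite cover of $\operatorname{supp}\beta$ to invert the system with $C_c^{\infty}$ coefficients. The only cosmetic difference is that the paper reduces to rank-one targets $\beta\lambda^{\#}$ while you solve for a general section componentwise; the substance is identical.
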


\begin{proof}
It is enough to prove that if $\beta\in C_{c}^{\infty}(\mathfrak{a}^{\ast})$
then $\beta\lambda^{\#}$ is in the image of $\Lambda_{\sigma}$.

If $\nu\in\mathfrak{a}^{\ast}$ then the map
\[
J_{\chi}(P,\sigma,i\nu):Wh_{\chi_{|N_{o}\cap M_{P}}}(H_{\sigma}^{\infty
})\rightarrow Wh_{\chi}(I_{P,\sigma,i\nu}^{\infty})
\]
is bijective. Thus there exist $w_{1}^{\nu},...,w_{d}^{\nu}\in I_{\sigma}$
such that
\[
J_{\chi}(P,\sigma,i\nu)(\lambda_{i})(w_{j}^{\nu})=\delta_{ij}\text{.}%
\]
Continuity implies that there exists an open neighborhood of $\nu,$ $U_{\nu
}\subset\mathfrak{a}^{\ast}$, such that%
\[
\det\left[  J_{\chi}(P,\sigma,i\mu)(\lambda_{i})(w_{j}^{\nu})\right]
\neq0,\mu\in U_{\nu}\text{.}%
\]
The covering $\{U_{\nu}\}_{\nu\in\mathrm{supp}(\beta)}$ of $\mathrm{supp}%
(\beta)$ has a finite subcover $\{U_{\nu_{i}}\}_{i=1}^{r}$. Set $U_{i}%
=U_{\nu_{i}}$ and $w_{j}^{i}=w_{j}^{\nu_{i}}$. Let $\varphi_{1},...,\varphi
_{m}$ be a partition of unity subordinate to the covering $\{U_{i}\}_{i=1}%
^{r}$ in particular for each $j$ there exists $i_{j}$ such that $\mathrm{supp}%
\varphi_{j}\subset U_{i_{j}}$. If $\mu\in U_{i_{l}}$ set
\[
\varphi_{l}(\mu)\left[  J_{\chi}(P,\sigma,i\mu)(\lambda_{i})(w_{j}^{i_{l}%
})\right]  ^{-1}=[z_{ij}^{l}(\mu)]
\]
and extend $z_{ij}^{l}$ to $\mathfrak{a}^{\ast}$ be zero. Then $z_{ij}^{l}\in
C_{c}^{\infty}(\mathfrak{a}^{\ast})$. Note that
\[
\sum_{j=1}^{d}z_{ji}^{l}(\mu)J_{\chi}(P,\sigma,i\mu)(\lambda_{k})(w_{j}%
^{l})=\delta_{ik}\varphi_{l}.
\]
Thus
\[
\sum_{l=1}^{m}\sum_{j=1}^{d}\Lambda_{\chi}(\sigma,w_{j}^{l},z_{ji}^{l}%
\beta)(\mu)=\sum_{l=1}^{m}\sum_{j=1}^{d}\sum_{k=1}^{d}z_{ji}^{l}(\mu
)\gamma(\mu)J_{\chi}(P,\sigma,i\mu)(\lambda_{k})(w_{j}^{i_{l}})\lambda
_{k}^{\#}%
\]%
\[
=\beta(\mu)\sum_{l=1}^{r}\sum_{k=1}^{d}\varphi_{l}(\mu)\delta_{ik}\lambda
^{\#}=\beta(\mu)\lambda^{\#}.
\]

\end{proof}

\begin{theorem}
Notation as in the previous lemma. If $\alpha\in C_{c}^{\infty}(\mathfrak{a}%
^{\ast}),\mu\in Wh_{\chi_{|N_{o}\cap M_{P}}}(H_{\sigma}^{\infty}),u\in\left(
H_{[P],\sigma}\right)  _{K}$ and if $f$ is defined by%
\[
f(g)=\int_{\mathfrak{a}^{\ast}}\alpha(\nu)J_{\chi}(P,\sigma,i\nu)(\mu
)(\pi_{\bar{P},\sigma,i\nu}(g)u)\mu(\sigma,i\nu)d\nu
\]
then $f\in\mathcal{C}(N_{o}\backslash G,\chi)$.
\end{theorem}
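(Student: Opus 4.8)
The plan is to realise $f$ as the $\chi$--Fourier coefficient $T_\chi(F)$ of a Harish--Chandra wave packet $F\in\mathcal{C}(G)$ and then invoke Proposition \ref{Fourier}. First I would record two facts that cost nothing. Since $J_\chi(P,\sigma,i\nu)(\mu)\in Wh_\chi(H_{\bar P,\sigma,i\nu}^{\infty})$, the integrand is left $(N_o,\chi)$--equivariant, so $f(ng)=\chi(n)f(g)$; and because $\mathrm{supp}(\alpha)$ is compact and, by Theorem \ref{Isomorphism}, the integrand is jointly smooth in $(\nu,g)$, differentiation under the integral sign gives $f\in C^{\infty}(G)$. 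Thus $f$ is a well--defined smooth function on $N_o\backslash G$ carrying the character $\chi$, and only the Schwartz estimates remain.

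Next I would identify $F$. Using $\mu(w)=\int_{N_o\cap M_P}\chi(n_1)\langle\sigma(n_1)w,z\rangle\,dn_1$ for a fixed $z\in(H_\sigma)_{K\cap M_P}$ (\cite{WPT} Theorem 37), I unfold the Jacquet integral exactly as in the proof of Lemma \ref{Basic}. Writing $c_\nu(g)=\langle(\pi_{\bar P,\sigma,i\nu}(g)u)(e),z\rangle$ for the matrix coefficient of $\pi_{\bar P,\sigma,i\nu}$ attached to $u$ and to the fixed functional $w\mapsto\langle w(e),z\rangle$ (both $K$--finite), one obtains
\[
J_\chi(P,\sigma,i\nu)(\mu)(\pi_{\bar P,\sigma,i\nu}(g)u)=\int_{N_o}\chi(n)^{-1}c_\nu(ng)\,dn=T_\chi(c_\nu)(g),
\]
first on the convergence domain $\left(\mathfrak{a}_P^{\ast}\right)_{\mathbb{C}}^{-}$ and then on $i\mathfrak{a}_P^{\ast}$ by the weak holomorphy of Theorem \ref{Isomorphism}. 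I then set $F(g)=\int_{\mathfrak{a}^{\ast}}\alpha(\nu)c_\nu(g)\mu(\sigma,i\nu)\,d\nu$. Because $u$ and $z$ are $K$--finite and $\alpha\in C_c^{\infty}(\mathfrak{a}^{\ast})$, the operator $\nu\mapsto\alpha(\nu)(u\otimes z^{\#})$ is Schwartz with values in $\mathrm{End}(H_{[P],\sigma}[F'])$ for a finite $F'\subset\hat K$, so $F$ is a wave packet of exactly the form governed by the Harish--Chandra Plancherel theorem recalled above (up to the harmless substitution $g\mapsto g^{-1}$, under which $\mathcal{C}(G)$ is stable). Hence $F\in\mathcal{C}(G)$, and Proposition \ref{Fourier} gives $T_\chi(F)\in\mathcal{C}(N_o\backslash G,\chi)$ at once.

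It remains to prove $f=T_\chi(F)$, i.e.\ to interchange $\int_{N_o}dn$ with $\int_{\mathfrak{a}^{\ast}}d\nu$ in $T_\chi(F)(g)=\int_{N_o}\chi(n)^{-1}\bigl(\int_{\mathfrak{a}^{\ast}}\alpha(\nu)c_\nu(ng)\mu(\sigma,i\nu)\,d\nu\bigr)dn$. This interchange is the main obstacle: at the tempered parameter $i\nu$ the $N_o$--integral of a single matrix coefficient $c_\nu$ converges only conditionally (it is the analytically continued Jacquet integral of Theorem \ref{Isomorphism}), so a naive Fubini with absolute values is unavailable. I would handle it by truncating $N_o$ to the compacta $\{\|n\|\le R\}$, where the interchange is automatic, and then letting $R\to\infty$: the outer limit converges by the absolute convergence of $T_\chi(F)$ furnished by Proposition \ref{Fourier} (as $F\in\mathcal{C}(G)$), while passing the limit through the integral over the compact set $\mathrm{supp}(\alpha)$ is legitimised by the uniform-in-$\nu$ estimates of Theorems \ref{Tempered-Estimate} and \ref{J-estimate}, which bound the truncated Jacquet integrals on $\mathrm{supp}(\alpha)$ by $a^{\rho_o}(1+\|\log a\|)^{d}$ independently of $\nu$. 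Once the interchange is justified, the inner integral is precisely $T_\chi(c_\nu)(g)=J_\chi(P,\sigma,i\nu)(\mu)(\pi_{\bar P,\sigma,i\nu}(g)u)$, whence $T_\chi(F)(g)=f(g)$ and the theorem follows.
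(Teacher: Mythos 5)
The decisive gap is in your second paragraph, where you claim that $F(g)=\int_{\mathfrak{a}^{\ast}}\alpha(\nu)c_{\nu}(g)\mu(\sigma,i\nu)\,d\nu$ is a Harish--Chandra wave packet and hence lies in $\mathcal{C}(G)$. The function $c_{\nu}(g)=\left\langle (\pi_{\bar{P},\sigma,i\nu}(g)u)(e),z\right\rangle$ is \emph{not} a matrix coefficient of the unitary representation: the functional $w\mapsto\left\langle w(e),z\right\rangle$ is evaluation at $e\in K$ paired with $z\in H_{\sigma}$, i.e.\ a distribution vector $\delta_{e}\otimes z$, and it is not continuous in the $L^{2}$ norm. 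In particular $z$ lives in $H_{\sigma}$, not in $H_{[P],\sigma}$, so ``$\alpha(\nu)(u\otimes z^{\#})$'' is not an element of $\mathrm{End}(H_{[P],\sigma}[F'])$, and the hypothesis of the wave packet theorem (Theorem 12.7.1 of \cite{RRGII}, the theorem recalled before Theorem \ref{density}) is simply not met. Worse, the conclusion is actually false for your $F$: since ${}_{\bar{P}}u_{\sigma,i\nu}$ is by construction left $\bar{N}_{P}$--invariant, $c_{\nu}(\bar{n}g)=c_{\nu}(g)$ for all $\bar{n}\in\bar{N}_{P}$, so $F$ is constant along the noncompact group $\bar{N}_{P}$; a nonzero function with this property cannot satisfy the decay $|F(g)|\leq C\left\vert g\right\vert^{-1/2}(1+\log\left\Vert g\right\Vert)^{-k}$ demanded of elements of $\mathcal{C}(G)$. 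Your third paragraph's truncation argument explicitly uses $F\in\mathcal{C}(G)$ (via Proposition \ref{Fourier}) to control the outer limit, so it inherits the same gap; the proof collapses at this point.

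Your overall strategy --- realize $f$ as $T_{\chi}$ of a Schwartz wave packet --- is in fact the paper's strategy, but the step you treat as a costless unfolding is exactly where the paper needs its ``technical lemma,'' Lemma \ref{surjective}. The paper does not unfold the single functional $\mu$; instead it uses the surjectivity of $\Lambda_{\chi,\sigma}$ (valid because $\chi$ is generic and $J_{\chi}(P,\sigma,i\nu)$ is bijective) to write $\alpha\mu$ as a finite combination of functions of the special form $\nu\mapsto\beta(\nu)\sum_{i}\overline{J_{\chi}(P,\sigma,i\nu)(\lambda_{i})(w)}\,\lambda_{i}$ with $w\in H_{[P],\sigma}$ a $K$--finite vector. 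The point of this particular shape of coefficients is that, by Theorem 57 of \cite{WPT}, the corresponding integral is precisely $\psi_{\chi}$, where $\psi(g)=\int_{\mathfrak{a}^{\ast}}\beta(\nu)\left\langle \pi_{\bar{P},\sigma,i\nu}(g)u,w\right\rangle \mu(\sigma,i\nu)\,d\nu$ is a wave packet of \emph{honest} matrix coefficients (both vectors $K$--finite in $H_{[P],\sigma}$); this $\psi$ does lie in $\mathcal{C}(G)$ by Theorem 12.7.1 of \cite{RRGII}, and then Proposition \ref{Fourier} finishes. Without Lemma \ref{surjective} there is no reason for an arbitrary $\mu\in Wh_{\chi_{|N_{o}\cap M_{P}}}(H_{\sigma}^{\infty})$ to arise as the Whittaker transform of a genuine wave packet, and, as the $\bar{N}_{P}$--invariance above shows, the naive candidate produced by unfolding never works.
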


\begin{proof}
The above lemma implies that $\alpha\mu$ is a linear combination of functions
of the form $\overline{\Lambda_{\sigma}(\bar{\beta}\otimes w)}$ with $\beta\in
C_{c}^{\infty}(\mathfrak{a}^{\ast})$ and $w\in I_{\sigma}$. So it is enough to
show that a function of the form
\[
h(g)=\int_{\mathfrak{a}^{\ast}}J_{\chi}(P,\sigma,i\nu)(\overline
{\Lambda_{\sigma}(\bar{\beta}\otimes w)}))(\pi_{\bar{P},\sigma,i\nu}%
(g)u)\mu(\sigma,i\nu)d\nu
\]
is in $\mathcal{C}(N_{o}\backslash G,\chi)$. If we write out the definition of
$\overline{\Lambda_{\sigma}(\bar{\beta}\otimes w)}$ we have%
\[
h(g)=\int_{\mathfrak{a}^{\ast}}\sum_{i=1}^{d}\beta(\nu)\overline{J_{\chi
}(P,\sigma,i\nu)(\lambda_{i})(w)}J_{\chi}(P,\sigma,i\nu)(\lambda_{i}%
)((\pi_{\bar{P},\sigma,i\nu}(g)u)\mu(\sigma,i\nu)d\nu.
\]
Define $\psi$ by%
\[
\psi(g)=\int_{\mathfrak{a}^{\ast}}\beta(\nu)\left\langle \pi_{\bar{P}%
,\sigma,i\nu}(g)u,w\right\rangle \mu(\sigma,i\nu)d\nu.
\]
Then Theorem 12.7.1 \cite{RRGII} implies that $\psi\in\mathcal{C}(G)$. So
$\psi_{\chi}\in\mathcal{C}(N_{o}\backslash G,\chi)$. Theorem 57 \cite{WPT}
implies that $h=\psi_{\chi}$. Completing the proof.
\end{proof}

\section{The Whittaker Plancherel space}

We look upon $HS(H_{[P],\sigma})$ as the Hilbert space tensor product
\[
H_{[P],\sigma}\otimes_{HS}H_{[P].\sigma}^{\prime}.
\]
The $K\times K-C^{\infty}$ vectors are the Fr\'{e}chet space tensor product%
\[
H_{[P],\sigma}^{\infty}\otimes_{F}\left(  H_{[P],\sigma}^{^{\prime}}\right)
^{\infty}%
\]
(here the topology is given by the the completion of $H_{[P],\sigma}^{\infty
}\otimes\left(  H_{[P],\sigma}^{^{\prime}}\right)  ^{\infty}$ using the
semi-norms $\mu\otimes\eta$ for $\mu$ and $\eta$ respectively continuous
seminorms on $H_{[P],\sigma}^{\infty}$ and $\left(  H_{[P],\sigma}^{^{\prime}%
}\right)  ^{\infty}$ and
\[
\mu\otimes\eta(z)=\inf\sum\mu(x_{i})\eta(y_{i})
\]
the $\inf$ is over all expressions $z=\sum x_{i}\otimes y_{i}$). For each
$P\in\lbrack P]$, $\nu\in\left(  \mathfrak{a}_{[P]}\right)  _{\mathbb{C}}$ we
have an action of $G\times G$ on $HS(H_{[P],\sigma})$ given by
\[
\Pi_{\bar{P},\sigma,\nu}(g_{1},g_{2})T=\pi_{\bar{P},\sigma,\nu}(g_{1}%
)T\pi_{\bar{P}},_{\sigma,\nu}(g_{2})^{-1}.
\]
The $G\times G-C^{\infty}$ vectors are the same as the $K\times K-C^{\infty}$
vectors (see \cite{CWParam} Proposition A.3). If $(\pi,V)$ is a smooth
representation of $G$ on a Fr\'{e}chet space and if $\chi$ is a unitary
character of $N_{o}$ then set
\[
V_{\chi}=\text{Closure}(\text{Span}\left\{  (\pi(n)-\chi(n))v|n\in N_{o},v\in
V\right\}  ).
\]
If we act on on $HS(H_{[P],\sigma})^{\infty}$ by $\pi_{\bar{P},\sigma,\nu
}\otimes I$ we have
\[
HS(H_{[P],\sigma})_{\bar{P},\nu,\chi}^{\infty}=\left(  H_{[P],\sigma}^{\infty
}\right)  _{\bar{P},\nu,\chi}\otimes_{F}\left(  H_{[P],\sigma}^{^{\prime}%
}\right)  ^{\infty}.
\]
If we form the Fr\'{e}chet space quotient we have%
\[
HS(H_{[P],\sigma})^{\infty}/HS(H_{[P],\sigma})_{\bar{P},\nu\chi}^{\infty
}=\left(  H_{[P],\sigma}^{\infty}/\left(  H_{[P],\sigma}^{\infty}\right)
_{\bar{P},\nu\chi}\right)  \otimes_{F}\left(  H_{[P],\sigma}^{^{\prime}%
}\right)  ^{\infty}%
\]
and we can drop the $F$ in the tensor product since $H_{[P],\sigma}^{\infty
}/\left(  H_{[P],\sigma}^{\infty}\right)  _{\bar{P},\nu\chi}$ is finite
dimensional (this follows from Theorem 54 in \cite{WPT} combined with Theorem
\ref{Isomorphism}). If $\nu\in\left(  \mathfrak{a}_{P}^{\ast}\right)
_{\mathbb{C}}$ is such that $A_{k}(\bar{P},\sigma,\nu)$ is invertible and if
$k\in N_{K}(A_{P})$ then
\[
C_{k}(\bar{P},\sigma,\nu)(HS(H_{[P],\sigma})_{\bar{P},\nu,\chi}^{\infty
})\subset HS(H_{[P],k\sigma})_{\bar{P},k\nu,\chi}^{\infty}.
\]
Thus $C_{k}(\bar{P},\sigma,\nu)$ induces a transformation
\[
\tilde{C}_{k,\chi}(\bar{P},\sigma,\nu):HS(H_{[P],\sigma})^{\infty
}/HS(H_{[P],\sigma})_{\bar{P},\nu\chi}^{\infty}\rightarrow HS(H_{[P],k\sigma
})^{\infty}/HS(H_{[P],k\sigma})_{\bar{P},k\nu\chi}^{\infty}.
\]
If $\nu\in\mathfrak{a}_{P}^{\ast}$ define a pairing $(...|...)_{\nu}$ between
$Wh_{\chi_{|N_{o}\cap M_{P}}^{-1}}(H_{\sigma}^{\infty})$ and $H_{[P],\sigma
}^{\infty}/\left(  H_{[P],\sigma}^{\infty}\right)  _{\bar{P},i\nu\chi}$ by
\[
(\lambda|\tilde{u})_{\nu}=J_{\chi^{-1}}(P,\sigma,i\nu)(\lambda)(u)
\]
here $u\in\tilde{u}=u+\left(  H_{[P],\sigma}^{\infty}\right)  _{\bar{P}%
,\nu,\chi}$. Recall that if $(\pi,H)$ is a unitary representation of $G$ then
$(\bar{\pi},\bar{H})$ is the unitary representation on the complex conjugate
space (i.e. the $\mathbb{C}$--vector space $H$ with $z\in\mathbb{C} $ acting
by $\bar{z}$ and $\bar{\pi}(g)=\pi(g)).$ Let $(\pi^{\prime},H^{\prime})$
denote the continuous dual representation of $H$ with $\pi^{\prime}%
(g)\lambda=\lambda\circ\pi(g)^{-1}$ and $(\pi^{\prime},H^{\prime})$ is
unitarily equivalent with $(\bar{\pi},\bar{H})$.

\begin{lemma}
If $\nu\in\mathfrak{a}_{P}^{\ast}$ then $(...|...)_{\nu}$ defines a
non-degenerate, bilinear pairing between $Wh_{\chi_{|N_{o}\cap M_{P}}%
}(H_{\sigma}^{\infty})$ and $H_{[P],\sigma}^{\infty}/\left(  H_{[P],\sigma
}^{\infty}\right)  _{\bar{P},i\nu,\chi}$ inducing a linear bijection
\[
\tau_{P,\sigma,\nu,\chi}:H_{[P],\sigma}^{\infty}/\left(  H_{[P],\sigma
}^{\infty}\right)  _{\bar{P},i\nu,\chi}\rightarrow Wh_{\chi_{|N_{o}\cap M_{P}%
}}(H_{\sigma}^{\infty})^{\ast}%
\]
composing this with the inverse to $\lambda\mapsto\lambda^{\#}$ we have an
isomorphism
\[
\tau_{P,\sigma,\nu,\chi}^{\#}:H_{[P],\sigma}^{\infty}/\left(  H_{[P],\sigma
}^{\infty}\right)  _{\bar{P},i\nu,\chi}\rightarrow\overline{Wh_{\chi
_{|N_{o}\cap M_{P}}}(H_{\sigma}^{\infty})}.
\]

\end{lemma}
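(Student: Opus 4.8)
The plan is to verify that $(\cdots|\cdots)_{\nu}$ is a well-defined bilinear pairing, establish non-degeneracy on each side, conclude that it is perfect and hence induces the bijection $\tau_{P,\sigma,\nu,\chi}$, and finally compose with the conjugate-linear isometry $\lambda\mapsto\lambda^{\#}$ of Section \ref{key-result}. Bilinearity is immediate: the value $J_{\chi^{-1}}(P,\sigma,i\nu)(\lambda)(u)$ is linear in $u$ because $J_{\chi^{-1}}(P,\sigma,i\nu)(\lambda)$ is a linear functional, and linear in $\lambda$ directly from the defining integral for the Jacquet functional. For well-definedness I would first record, via Theorem \ref{Isomorphism} applied with $\chi^{-1}$ in place of $\chi$, that $J_{\chi^{-1}}(P,\sigma,i\nu)(\lambda)\in Wh_{\chi^{-1}}(H_{\bar{P},\sigma,i\nu}^{\infty})$, i.e.\ it transforms by $\chi^{-1}$ under $N_{o}$. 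The computation carried out in Lemma \ref{observation} then shows that such a functional annihilates the generators $(\pi_{\bar{P},\sigma,i\nu}(n)-\chi(n))v$ of the coinvariant subspace $(H_{[P],\sigma}^{\infty})_{\bar{P},i\nu,\chi}$, so the value depends only on the class $\tilde{u}$ and the pairing descends to the quotient.

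Next I would pin down the two finite dimensions. The quotient $H_{[P],\sigma}^{\infty}/(H_{[P],\sigma}^{\infty})_{\bar{P},i\nu,\chi}$ is finite dimensional (as already noted, from Theorem 54 of \cite{WPT} together with Theorem \ref{Isomorphism}), so every map below is automatically continuous and I may argue purely algebraically. By construction its dual is exactly the space $Wh_{\chi^{-1}}(H_{\bar{P},\sigma,i\nu}^{\infty})$ of functionals killing the coinvariant subspace. By Theorem \ref{Isomorphism} the Jacquet map $J_{\chi^{-1}}(P,\sigma,i\nu)\colon Wh_{\chi^{-1}_{|N_{o}\cap M_{P}}}(H_{\sigma}^{\infty})\to Wh_{\chi^{-1}}(H_{\bar{P},\sigma,i\nu}^{\infty})$ is a bijection, so $Wh_{\chi^{-1}_{|N_{o}\cap M_{P}}}(H_{\sigma}^{\infty})$ and the quotient share the common dimension $d$.

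The crux is non-degeneracy, and both directions fall out of the bijectivity in Theorem \ref{Isomorphism}. If $\lambda$ pairs to zero with every $\tilde{u}$, then $J_{\chi^{-1}}(P,\sigma,i\nu)(\lambda)$ vanishes on all of $H_{[P],\sigma}^{\infty}$, hence is the zero functional, whence $\lambda=0$ by injectivity of $J_{\chi^{-1}}$. Conversely, if $\tilde{u}$ pairs to zero with every $\lambda$, then $u$ is annihilated by every element of the image $J_{\chi^{-1}}(P,\sigma,i\nu)\bigl(Wh_{\chi^{-1}_{|N_{o}\cap M_{P}}}(H_{\sigma}^{\infty})\bigr)$; by surjectivity this image is the whole of $Wh_{\chi^{-1}}(H_{\bar{P},\sigma,i\nu}^{\infty})$, that is the full dual of the quotient, and since the dual of a finite-dimensional space separates points we get $\tilde{u}=0$. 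A non-degenerate bilinear pairing between finite-dimensional spaces of equal dimension $d$ is perfect, so $\tilde{u}\mapsto\bigl(\lambda\mapsto(\lambda|\tilde{u})_{\nu}\bigr)$ is the asserted linear bijection $\tau_{P,\sigma,\nu,\chi}$ onto $Wh_{\chi^{-1}_{|N_{o}\cap M_{P}}}(H_{\sigma}^{\infty})^{\ast}$.

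The passage to $\tau_{P,\sigma,\nu,\chi}^{\#}$ is then formal. By Section \ref{key-result}, $\lambda\mapsto\lambda^{\#}$ is a conjugate-linear isometry of the Hilbert space $Wh_{\chi^{-1}_{|N_{o}\cap M_{P}}}(H_{\sigma}^{\infty})$ (with inner product $(\cdots,\cdots)_{\sigma}$) onto its dual; read as a linear isomorphism $\overline{Wh_{\chi^{-1}_{|N_{o}\cap M_{P}}}(H_{\sigma}^{\infty})}\to Wh_{\chi^{-1}_{|N_{o}\cap M_{P}}}(H_{\sigma}^{\infty})^{\ast}$, its inverse is linear, and composing $\tau_{P,\sigma,\nu,\chi}$ with it yields the claimed linear isomorphism onto the conjugate space. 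I expect no genuine obstacle once Theorem \ref{Isomorphism} is in hand; the single point demanding care is the annihilation step underlying well-definedness, where the interplay of $\chi$ and $\chi^{-1}$ (precisely the phenomenon flagged in the introduction, and the reason the pairing is written through $J_{\chi^{-1}}$) must be tracked so that a functional in $Wh_{\chi^{-1}}$ really does kill the $(\pi_{\bar{P},\sigma,i\nu}(n)-\chi(n))v$.
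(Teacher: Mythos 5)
Your overall strategy---reduce everything to Theorem \ref{Isomorphism} plus the finite dimensionality of the quotient, get non-degeneracy on both sides from injectivity and surjectivity of the Jacquet map, invoke the linear algebra of non-degenerate pairings of finite-dimensional spaces, and finish by composing with the inverse of $\lambda\mapsto\lambda^{\#}$---is exactly the intended argument (the paper states this lemma without proof, as a consequence of the surrounding discussion), and those steps are correct \emph{once the pairing is known to descend to the quotient}. But the well-definedness step, the very point you flag as the one demanding care, is asserted falsely. A functional $\mu=J_{\chi^{-1}}(P,\sigma,i\nu)(\lambda)\in Wh_{\chi^{-1}}(H_{\bar{P},\sigma,i\nu}^{\infty})$ satisfies $\mu(\pi_{\bar{P},\sigma,i\nu}(n)v)=\chi(n)^{-1}\mu(v)$, hence $\mu\bigl((\pi_{\bar{P},\sigma,i\nu}(n)-\chi(n))v\bigr)=(\chi(n)^{-1}-\chi(n))\mu(v)$, which does not vanish for general $n$ and $v$ unless $\chi^{2}=1$. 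What $\mu$ does annihilate is $(\pi_{\bar{P},\sigma,i\nu}(n)-\chi(n)^{-1})v$, i.e.\ the subspace $(H_{[P],\sigma}^{\infty})_{\bar{P},i\nu,\chi^{-1}}$. This is also what the computation in Lemma \ref{observation} actually gives: since $\pi_{\bar{P},\sigma,i\nu}(L_{n^{-1}}\varphi)=\pi_{\bar{P},\sigma,i\nu}(n)^{-1}\pi_{\bar{P},\sigma,i\nu}(\varphi)$, the vectors killed there are of the form $(\pi_{\bar{P},\sigma,i\nu}(n)^{-1}-\chi(n))w$, and $\chi(n)=\chi^{-1}(n^{-1})$, so these span the $\chi^{-1}$-coinvariants, not the $\chi$-coinvariants. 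For the same reason your identification of the dual of $H_{[P],\sigma}^{\infty}/(H_{[P],\sigma}^{\infty})_{\bar{P},i\nu,\chi}$ with $Wh_{\chi^{-1}}(H_{\bar{P},\sigma,i\nu}^{\infty})$ is wrong: the annihilator of the $\chi$-coinvariant subspace is $Wh_{\chi}(H_{\bar{P},\sigma,i\nu}^{\infty})$.

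What you have run into is an inconsistency in the paper's own notation: the pairing is defined using $J_{\chi^{-1}}$ on $Wh_{\chi_{|N_{o}\cap M_{P}}^{-1}}(H_{\sigma}^{\infty})$, while the lemma is stated for $Wh_{\chi_{|N_{o}\cap M_{P}}}(H_{\sigma}^{\infty})$ and the quotient by the $\chi$-coinvariants. The job of the proof is to resolve this mismatch consistently, not to assert a compatibility that fails. Either consistent reading makes your remaining argument go through verbatim: pair $Wh_{\chi_{|N_{o}\cap M_{P}}^{-1}}(H_{\sigma}^{\infty})$ with $H_{[P],\sigma}^{\infty}/(H_{[P],\sigma}^{\infty})_{\bar{P},i\nu,\chi^{-1}}$ via $J_{\chi^{-1}}$, so that the full dual of the (finite-dimensional) quotient is $Wh_{\chi^{-1}}(H_{\bar{P},\sigma,i\nu}^{\infty})$ and Theorem \ref{Isomorphism} applied with $\chi^{-1}$ in place of $\chi$ gives well-definedness, both non-degeneracy statements, and the equality of dimensions; or else pair $Wh_{\chi_{|N_{o}\cap M_{P}}}(H_{\sigma}^{\infty})$ with the $\chi$-coinvariant quotient via $J_{\chi}$, which matches the maps displayed in the lemma. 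As written, with the character on the Whittaker side inverse to the character defining the coinvariants, the pairing simply does not descend to the stated quotient, so the proof needs this one correction---after which it is complete.
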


Note that if $V$ is a Fr\'{e}chet space and if $\lambda\in V^{\prime}$ then
defining $\bar{\lambda}(u)=\overline{\lambda(u)}$ then $\bar{\lambda}\in
\bar{V}^{\prime}$. This implies

\begin{lemma}
If $(\pi,H_{\pi})$ is a unitary representation of $G$ and if $\lambda\in
Wh_{\chi^{-1}}(\bar{H}_{\bar{\pi}}^{\infty})$ then $\bar{\lambda}\in Wh_{\chi
}(H_{\pi}^{\infty})$.
\end{lemma}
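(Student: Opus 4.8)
The plan is a direct verification, using the observation immediately preceding the lemma to get continuity and the unitarity of $\chi$ to get the equivariance. First I would record the identification of the underlying spaces. Since $\bar{\pi}(g)=\pi(g)$ as maps, and the conjugate space $\bar{H}_{\bar{\pi}}$ has the same underlying real topological vector space as $H_{\pi}$ (only the complex scalar multiplication is conjugated), a vector is smooth for $\bar{\pi}$ exactly when it is smooth for $\pi$. Hence $\bar{H}_{\bar{\pi}}^{\infty}=H_{\pi}^{\infty}$ as Fr\'{e}chet spaces over $\mathbb{R}$ (with conjugated complex structure), and in particular $\overline{\bar{H}_{\bar{\pi}}^{\infty}}=H_{\pi}^{\infty}$.

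Given $\lambda\in Wh_{\chi^{-1}}(\bar{H}_{\bar{\pi}}^{\infty})$, so that $\lambda$ is a continuous functional on $\bar{H}_{\bar{\pi}}^{\infty}$, the note preceding the lemma (applied with $V=\bar{H}_{\bar{\pi}}^{\infty}$) gives that $\bar{\lambda}$, defined by $\bar{\lambda}(u)=\overline{\lambda(u)}$, is a continuous functional on $\overline{\bar{H}_{\bar{\pi}}^{\infty}}=H_{\pi}^{\infty}$; that is, $\bar{\lambda}\in(H_{\pi}^{\infty})^{\prime}$. This settles membership in the dual space.

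It then remains to check the $Wh_{\chi}$ equivariance. For $n\in N_{o}$ and $u\in H_{\pi}^{\infty}$ I would compute, using $\pi(n)u=\bar{\pi}(n)u$ together with the hypothesis that $\lambda$ transforms by $\chi^{-1}$,
\[
\bar{\lambda}(\pi(n)u)=\overline{\lambda(\bar{\pi}(n)u)}=\overline{\chi^{-1}(n)\lambda(u)}=\overline{\chi^{-1}(n)}\,\overline{\lambda(u)}=\overline{\chi^{-1}(n)}\,\bar{\lambda}(u).
\]
Since $\chi$ is a unitary character, $|\chi(n)|=1$, so that $\chi^{-1}(n)=\overline{\chi(n)}$ and therefore $\overline{\chi^{-1}(n)}=\chi(n)$. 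Thus $\bar{\lambda}(\pi(n)u)=\chi(n)\bar{\lambda}(u)$, which is precisely the defining condition for $\bar{\lambda}\in Wh_{\chi}(H_{\pi}^{\infty})$.

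The computation is entirely routine; the only point requiring any care — and the closest thing to an obstacle — is keeping track of the two conjugations at once: the passage from $\bar{\pi}$ to $\pi$ (which governs where smoothness and the dual are taken) and the bar on $\lambda$ (which turns $\chi^{-1}$ into $\chi$ exactly because $\chi$ is unitary). Making the identification $\bar{H}_{\bar{\pi}}^{\infty}=H_{\pi}^{\infty}$ explicit at the outset is what keeps these bookkeeping steps from colliding.
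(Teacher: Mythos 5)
Your proof is correct and is essentially the paper's own argument: the paper states the lemma as an immediate consequence of the preceding note that $\lambda\in V^{\prime}$ implies $\bar{\lambda}\in\bar{V}^{\prime}$, and you have simply written out the routine details (the identification $\bar{H}_{\bar{\pi}}^{\infty}=H_{\pi}^{\infty}$ and the computation turning $\chi^{-1}$ into $\chi$ via unitarity) that the paper leaves implicit.
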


If $T\in\mathcal{S}(H_{[P],\sigma})$ and $\nu\in\mathfrak{a}_{P}^{\ast}$ then
define $\Psi_{P,\sigma,\nu,\chi}(T)=\left(  \tau_{P,\sigma,\nu,\chi}%
^{\#}\otimes I\right)  (T)\in\overline{Wh_{\chi_{|N_{o}\cap M_{P}}}(H_{\sigma
}^{\infty})}\otimes\bar{H}_{[P],\sigma}^{\infty}$. Here we have used the
identification of $\mathcal{S}(H_{[P],\sigma})$ with $H_{[P],\sigma}^{\infty
}\otimes_{F}\bar{H}_{[P],\sigma}^{\infty}$.

For later developments we give in $\Psi_{P,\mu,\nu,\chi}(T)$ in coordinates
and leave it to the reader to unwind the definitions used in the definition of
$\Psi_{P,\mu,\nu,\chi}(T)$ especially the pairing $(...,...)_{\nu}$.

\begin{lemma}
\label{explicit}Let $\lambda_{i,\mu}$ be an orthonormal basis of
$Wh_{\chi_{|N_{o}\cap M_{P}}}(H_{\mu}^{\infty})$ and let $\left\{  v_{i,\mu
}\right\}  $ be an orthonormal basis of $H_{[P],\mu}$ such that for each $i$
there exists $\gamma_{i}\in\hat{K}$ such that $v_{i,\mu}\in H_{[P],\mu}%
[\gamma_{i}]$ then%
\[
\Psi_{P,\mu,\nu,\chi}(T)=\sum_{i,j}J_{\chi^{-1}}(P,\mu,i\nu)(\lambda_{i,\mu
})(Tv_{j,\mu})\lambda_{i,\mu}^{\#}\otimes v_{j,\mu}^{\#}%
\]
if $T\in\mathcal{S}(H_{[P],\mu})$.
\end{lemma}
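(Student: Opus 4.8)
The plan is to unwind the three ingredients defining $\Psi_{P,\mu,\nu,\chi}$ — the identification $\mathcal{S}(H_{[P],\mu})\cong H_{[P],\mu}^{\infty}\otimes_{F}\bar{H}_{[P],\mu}^{\infty}$, the isomorphism $\tau_{P,\mu,\nu,\chi}^{\#}$ of the preceding lemma, and the conjugate-linear isometry $\#$ — and then to evaluate everything against the two fixed orthonormal bases. First I would record the coordinate form of a Hilbert--Schmidt operator: under the identification of $HS(H_{[P],\mu})$ with $H_{[P],\mu}\otimes_{HS}H_{[P],\mu}^{\prime}$, the rank-one operator $v_{j,\mu}^{\#}(\cdot)\,Tv_{j,\mu}$ corresponds to $(Tv_{j,\mu})\otimes v_{j,\mu}^{\#}$, so $T=\sum_{j}(Tv_{j,\mu})\otimes v_{j,\mu}^{\#}$, the series converging in the Fr\'{e}chet tensor product precisely because $T\in\mathcal{S}(H_{[P],\mu})=HS(H_{[P],\mu})^{\infty}$. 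Since $\Psi_{P,\mu,\nu,\chi}=\tau_{P,\mu,\nu,\chi}^{\#}\otimes I$ is continuous, it then suffices to apply it term by term and to compute $\tau_{P,\mu,\nu,\chi}^{\#}$ on the first factor.

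The heart of the computation is the evaluation of $\tau_{P,\mu,\nu,\chi}^{\#}\circ q$ on a smooth vector $w\in H_{[P],\mu}^{\infty}$, where $q$ denotes the projection onto $H_{[P],\mu}^{\infty}/\left(H_{[P],\mu}^{\infty}\right)_{\bar{P},i\nu,\chi}$. By definition of the pairing $(\cdots\mid\cdots)_{\nu}$, the functional $\tau_{P,\mu,\nu,\chi}(q(w))$ sends $\lambda\mapsto J_{\chi^{-1}}(P,\mu,i\nu)(\lambda)(w)$, and $\tau_{P,\mu,\nu,\chi}^{\#}=(\#)^{-1}\circ\tau_{P,\mu,\nu,\chi}$. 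To read off its coordinates I would use that $\{\lambda_{i,\mu}\}$ is orthonormal for the inner product $(\cdots,\cdots)_{\mu}$ of Corollary 38 of \cite{WPT} and that $\kappa^{\#}(\cdot)=(\cdots,\kappa)_{\mu}$: the element $\eta\in Wh_{\chi_{|N_{o}\cap M_{P}}}(H_{\mu}^{\infty})$ with $\eta^{\#}=\tau_{P,\mu,\nu,\chi}(q(w))$ must satisfy $(\lambda_{i,\mu},\eta)_{\mu}=J_{\chi^{-1}}(P,\mu,i\nu)(\lambda_{i,\mu})(w)$, forcing $\eta=\sum_{i}\overline{J_{\chi^{-1}}(P,\mu,i\nu)(\lambda_{i,\mu})(w)}\,\lambda_{i,\mu}$. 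Passing back through $\#$, which is conjugate linear, cancels the conjugation, so that under the identification $\overline{Wh_{\chi_{|N_{o}\cap M_{P}}}(H_{\mu}^{\infty})}\cong Wh_{\chi_{|N_{o}\cap M_{P}}}(H_{\mu}^{\infty})^{\prime}$ via $\#$ one obtains the clean expression $\tau_{P,\mu,\nu,\chi}^{\#}(q(w))=\sum_{i}J_{\chi^{-1}}(P,\mu,i\nu)(\lambda_{i,\mu})(w)\,\lambda_{i,\mu}^{\#}$.

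Combining the two steps, I would apply $\tau_{P,\mu,\nu,\chi}^{\#}\otimes I$ to $T=\sum_{j}(Tv_{j,\mu})\otimes v_{j,\mu}^{\#}$, taking $w=Tv_{j,\mu}$ in the formula just derived, which immediately produces the asserted double sum $\sum_{i,j}J_{\chi^{-1}}(P,\mu,i\nu)(\lambda_{i,\mu})(Tv_{j,\mu})\,\lambda_{i,\mu}^{\#}\otimes v_{j,\mu}^{\#}$. The step I expect to be the main obstacle is the bookkeeping of conjugate linearity: one must check that the conjugation produced by expanding a linear functional in an orthonormal basis is exactly undone by the conjugate linearity of $\#$, so that no complex conjugates survive in front of the $J_{\chi^{-1}}$ factors — this is the entire content of the identification $\overline{Wh}\cong Wh^{\prime}$ and is easy to get backwards. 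A secondary point to be careful about is the legitimacy of the rearrangement: the sum over $i$ is finite, running over the $d$-dimensional $Wh$-basis, while the outer sum over $j$ converges in $\bar{H}_{[P],\mu}^{\infty}$ because $T\in\mathcal{S}(H_{[P],\mu})$ has rapidly decreasing matrix coefficients and $J_{\chi^{-1}}(P,\mu,i\nu)$ grows only polynomially by Theorem \ref{Tempered-Estimate}, so continuity of $\tau_{P,\mu,\nu,\chi}^{\#}\otimes I$ permits passing the operator inside the convergent series.
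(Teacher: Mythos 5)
Your proof is correct: the paper states this lemma without proof, explicitly leaving it ``to the reader to unwind the definitions used in the definition of $\Psi_{P,\mu,\nu,\chi}(T)$, especially the pairing $(...|...)_{\nu}$,'' and your argument is exactly that unwinding --- the expansion $T=\sum_{j}(Tv_{j,\mu})\otimes v_{j,\mu}^{\#}$, the identification of $\tau_{P,\mu,\nu,\chi}(q(w))$ via the pairing with $J_{\chi^{-1}}(P,\mu,i\nu)$, and the cancellation of the conjugation coming from the orthonormal expansion against the conjugate linearity of $\#$. In particular you have handled correctly the one point where it is easy to go wrong (the surviving conjugates), so your proposal matches the paper's intended argument.
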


\begin{theorem}
If $\nu\in\mathfrak{a}_{P}^{\ast}$ consider the map%
\[
C_{k}(\bar{P},\sigma,i\nu):\mathcal{S}(H_{[P],\sigma})\rightarrow
\mathcal{S}(H_{[P],k\sigma})
\]
than there exists a unique map
\[
CW_{k,\chi}(P,\sigma,\nu):\overline{Wh_{\chi_{|N_{o}\cap M_{P}}}(H_{\sigma
}^{\infty})}\otimes\bar{H}_{[P],\sigma}^{\infty}\rightarrow\overline
{Wh_{\chi_{|N_{o}\cap M_{P}}}(H_{k\sigma}^{\infty})}\otimes\bar{H}%
_{[P],k\sigma}^{\infty}%
\]
such that if $T\in\mathcal{S}(H_{[P],\sigma})$ then%
\[
CW_{k,\chi}(P,\sigma,\nu)(\Psi_{P,\sigma,\nu,\chi}(T))=\Psi_{P,k\sigma
,k\nu,\chi}(C_{k}(\bar{P},\sigma,i\nu)(T)).
\]
Furthermore, $CW_{k,\chi}(P,\sigma\,\nu)$ extends to a unitary operator on
$\overline{Wh_{\chi_{|N_{o}\cap M_{P}}}(H_{\sigma}^{\infty})}\otimes\bar
{H}_{[P],\sigma}$ and if $\eta\in$ $\overline{Wh_{\chi_{|N_{o}\cap M_{P}}%
}(H_{\sigma}^{\infty})}\otimes\bar{H}_{[P],\sigma}$
\[
\nu\mapsto CW_{k,\chi}(P,\sigma,\nu)(\eta)
\]
extends to a real analytic map to $\overline{Wh_{\chi_{|N_{o}\cap M_{P}}%
}(H_{k\sigma}^{\infty})}\otimes\bar{H}_{[P],k\sigma}$.
\end{theorem}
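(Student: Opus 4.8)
The plan is to construct $CW_{k,\chi}(P,\sigma,\nu)$ by descent through the kernel of $\Psi$ and then to verify unitarity by tracking the Harish-Chandra normalization of $A_k$ through the two tensor factors. First I would record the tensor picture. Under the identifications $\mathcal{S}(H_{[P],\sigma})\cong H_{[P],\sigma}^{\infty}\otimes_{F}\bar H_{[P],\sigma}^{\infty}$ and $HS(H_{[P],\sigma})\cong H_{[P],\sigma}\otimes\bar H_{[P],\sigma}$, the operator $C_{k}(\bar P,\sigma,i\nu)(T)=A_{k}TA_{k}^{-1}$, with $A_{k}=A_{k}(\bar P,\sigma,i\nu)$, acts on a rank-one operator $z\mapsto\langle z,y\rangle x$ by $z\mapsto\langle z,(A_{k}^{*})^{-1}y\rangle A_{k}x$, i.e. $C_{k}=A_{k}\otimes\overline{(A_{k}^{*})^{-1}}$. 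On the other hand $\Psi_{P,\sigma,\nu,\chi}=(\tau_{P,\sigma,\nu,\chi}^{\#}\circ q)\otimes I$, where $q:H_{[P],\sigma}^{\infty}\to H_{[P],\sigma}^{\infty}/(H_{[P],\sigma}^{\infty})_{\bar P,i\nu,\chi}$ is the quotient map. Since $\tau^{\#}$ is a linear bijection onto $\overline{Wh_{\chi_{|N_{o}\cap M_{P}}}(H_{\sigma}^{\infty})}$ and $q$ is onto, $\Psi_{P,\sigma,\nu,\chi}$ is surjective onto $\overline{Wh_{\chi_{|N_{o}\cap M_{P}}}(H_{\sigma}^{\infty})}\otimes\bar H_{[P],\sigma}^{\infty}$, which already gives the uniqueness of $CW$. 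Its kernel is $(H_{[P],\sigma}^{\infty})_{\bar P,i\nu,\chi}\otimes_{F}(H_{[P],\sigma}')^{\infty}=HS(H_{[P],\sigma})_{\bar P,i\nu,\chi}^{\infty}$, matching the identity recorded just before the statement.

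For existence I would invoke the inclusion $C_{k}(\bar P,\sigma,i\nu)\big(HS(H_{[P],\sigma})_{\bar P,i\nu,\chi}^{\infty}\big)\subset HS(H_{[P],k\sigma})_{\bar P,k\nu,\chi}^{\infty}$ stated above: it says precisely that $C_{k}$ carries $\ker\Psi_{P,\sigma,\nu,\chi}$ into $\ker\Psi_{P,k\sigma,k\nu,\chi}$, so the prescribed relation $CW_{k,\chi}(\Psi(T))=\Psi(C_{k}(T))$ defines $CW_{k,\chi}(P,\sigma,\nu)$ unambiguously. For $\nu\in(\mathfrak a_{P}^{\ast})'$ this is immediate since $A_{k}$ is invertible (so $C_{k}$ maps the kernel onto the kernel); the extension to all $\nu\in\mathfrak a_{P}^{\ast}$ and the asserted real analyticity then follow by density, using Theorem \ref{maass-selberg} (real analyticity of $(k,\nu)\mapsto C_{k}$ into the unitaries of $HS$) together with the weak holomorphy and temperedness estimates of the Jacquet integrals (Theorem \ref{Isomorphism}, Theorem \ref{Tempered-Estimate}), which make $\nu\mapsto\Psi_{P,\sigma,\nu,\chi}$ real analytic.

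The main obstacle is unitarity. Writing $CW_{k,\chi}(P,\sigma,\nu)=W_{k}\otimes\overline{(A_{k}^{*})^{-1}}$, where $W_{k}=\tau^{\#}_{k\sigma,k\nu}\,q_{k\sigma,k\nu}\,A_{k}\,(\tau^{\#}_{\sigma,\nu}q_{\sigma,\nu})^{-1}:\overline{Wh_{\chi_{|N_{o}\cap M_{P}}}(H_{\sigma}^{\infty})}\to\overline{Wh_{\chi_{|N_{o}\cap M_{P}}}(H_{k\sigma}^{\infty})}$, the problem splits into two factors governed by the single operator $A_{k}$. The Maass--Selberg normalization $A_{k}^{*}A_{k}=\frac{C}{\mu(\sigma,i\nu)}I$ (from the definition of $\mu$, as in the proof of Lemma \ref{extension}) gives $A_{k}=\sqrt{C/\mu}\,U_{k}$ and $(A_{k}^{*})^{-1}=\sqrt{\mu/C}\,V_{k}$ with $U_{k},V_{k}$ unitary, so the scalars $\sqrt{C/\mu}$ and $\sqrt{\mu/C}$ carried by the two factors cancel, exactly as they do in the identity $C_{k}=U_{k}\otimes\bar U_{k}$ underlying Lemma \ref{extension}. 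The remaining content is that the unitary part of $A_{k}$ is transported by $\tau^{\#}$ to a genuine isometry for the Whittaker inner products $(\cdot,\cdot)_{\sigma}$, i.e. that $\sqrt{\mu/C}\,W_{k}$ is unitary. I would deduce this from the functional equation $J_{\chi^{-1}}(P,k\sigma,ik\nu)(\lambda)\circ A_{k}=J_{\chi^{-1}}(P,\sigma,i\nu)(\lambda')$, which follows from the intertwining property of $A_{k}$ and the bijectivity of the Jacquet map (Theorem \ref{Isomorphism}), combined with the defining identity of the Whittaker inner product (Corollary 38 of \cite{WPT}) relating $(\cdot,\cdot)_{\sigma}$ to $\langle\cdot,\cdot\rangle$ on $H_{\sigma}$; this is what converts the $H$-level normalization into the $Wh$-level one through $\tau^{\#}$. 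Tracking these conjugate-linear identifications and the scalar carefully through $\tau^{\#}$ and the coordinate formula of Lemma \ref{explicit} is the technical heart; once it is in place, $CW_{k,\chi}(P,\sigma,\nu)$ is a tensor product of unitaries, hence extends to a unitary operator on $\overline{Wh_{\chi_{|N_{o}\cap M_{P}}}(H_{\sigma}^{\infty})}\otimes\bar H_{[P],\sigma}$, with the real-analytic dependence on $\nu$ supplied by the previous paragraph.
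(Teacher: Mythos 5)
Your construction of $CW_{k,\chi}(P,\sigma,\nu)$ is sound and is essentially the paper's own: the paper defines $CW_{k,\chi}(P,\sigma,\nu)=\tau_{P,k\sigma,k\nu,\chi}^{\#}\circ\tilde{C}_{k,\chi}(\bar{P},\sigma,\nu)\circ\left(\tau_{P,\sigma,\nu}^{\#}\right)^{-1}$, which is exactly your descent of $C_{k}$ through $\ker\Psi$, justified by the inclusion $C_{k}(\bar{P},\sigma,\nu)\bigl(HS(H_{[P],\sigma})_{\bar{P},\nu,\chi}^{\infty}\bigr)\subset HS(H_{[P],k\sigma})_{\bar{P},k\nu,\chi}^{\infty}$; your surjectivity-of-$\Psi$ argument for uniqueness and your route to real analyticity (the coordinate formula of Lemma \ref{explicit} plus Theorem \ref{maass-selberg}) also match the paper.

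The gap is in the unitarity. You correctly reduce it to the claim that $\sqrt{\mu(\sigma,i\nu)/C}\,W_{k}$ is unitary for the Whittaker inner products $(\cdot,\cdot)_{\sigma}$ and $(\cdot,\cdot)_{k\sigma}$, but the deduction you propose --- the functional equation $J_{\chi^{-1}}(P,k\sigma,ik\nu)(\lambda)\circ A_{k}=J_{\chi^{-1}}(P,\sigma,i\nu)(\lambda')$ combined with the defining identity of $(\cdot,\cdot)_{\sigma}$ --- does not prove it. The functional equation is essentially the definition of the map $\lambda\mapsto\lambda'$ (it determines it only because $J$ is bijective, Theorem \ref{Isomorphism}), and the defining identity of $(\cdot,\cdot)_{\sigma}$ is an integral over $(N_{o}\cap M_{P})\backslash{}^{o}M_{P}$ of matrix coefficients of the discrete series $\sigma$, which has no formal interaction with the composition $J\circ A_{k}$: to compute $(\lambda',\lambda')_{\sigma}$ you must recover $\lambda'$ from its Jacquet integral, i.e., control the asymptotics of Jacquet integrals composed with intertwining operators. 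That the normalizing constant arising there is the \emph{same} $C/\mu(\sigma,i\nu)$ appearing in the Hilbert-level Maass--Selberg relation $A_{k}^{\ast}A_{k}=\frac{C}{\mu(\sigma,i\nu)}I$ is precisely the Whittaker analogue of the Maass--Selberg relations --- a genuine theorem, not a bookkeeping exercise in conjugate-linear identifications and scalars. This is exactly where the paper imports outside analytic input: it identifies $CW_{k,\chi}(P,\sigma,\nu)$ with Harish-Chandra's ${}^{o}C$-function and invokes Lemma 9.24.8, p.~273, part III of \cite{HCV} for the unitarity of the normalized version. Without that result (or an equivalent one from \cite{WPT}), your argument establishes existence, uniqueness, and real analyticity, but not unitarity.
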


\begin{proof}
Take
\[
CW_{k,\chi}(P,\sigma,\nu)=\tau_{P,k\sigma,k\nu,\chi}^{\#}\circ\tilde
{C}_{k,\chi}(\bar{P},\sigma,\nu)\circ\left(  \tau_{P,\sigma,\nu}^{\#}\right)
^{-1}%
\]
where defined. Writing out $\Psi_{P,k\sigma,\nu,\chi}$ in terms of $J_{\chi
}(P,k\sigma,i\nu)$ using Lemma \ref{explicit} one sees that the real
analyticity follows from that of $C_{k,\chi}(\bar{P},\sigma,i\nu)(T)$ (see
Theorem \ref{maass-selberg}). The unitarity is a direct consequence of Lemma
9.24.8 p. 273 in part III of \cite{HCV} by relating our $CW_{k,\chi}%
(P,\sigma\,\nu)$ to Harish-Chandra's {}$^{o}C$ function in \cite{HCV}.
\end{proof}

We will now describe the analogues of the bundles in Section
\ref{G-Plancherel Space} in the Whittaker case. Let $(\sigma,H_{\sigma})$ be
an irreducible square integrable representation of ${}^{o}M_{P}$. The bundle
is an $N_{K}(A_{P})$--bundle over $N_{K}(A_{P})\sigma\times\mathfrak{a}%
_{P}^{\ast}$ with fiber over $(k\sigma,\nu)$ equal to $\overline
{Wh_{\chi_{|N_{o}\cap M_{P}}^{-1}}(H_{\sigma}^{\infty})}\otimes\bar
{H}_{[P],\sigma}$. The action of $k$ is
\[
k\xi=CW_{k_{1},\chi}(P,k\sigma,\nu)(\xi)
\]
for $\xi\in\overline{Wh_{\chi_{|N_{o}\cap M_{P}}^{-1}}(H_{k\sigma}^{\infty}%
)}\otimes\bar{H}_{[P],k\sigma}$. We denote the bundle by $\mathbb{W}_{\chi
}(P,\sigma)$. A continuous $N_{K}(A_{P})$--invariant cross-section of
$\mathbb{W}_{\chi}(P,\sigma)$ is a map
\[
\alpha:N_{K}(A_{P})\sigma\times\mathfrak{a}_{P}^{\ast}\rightarrow\cup_{k\in
N_{K}(A_{P}),\nu\in\mathfrak{a}_{P}^{\ast}}\overline{Wh_{\chi_{|N_{o}\cap
M_{P}}^{-1}}(H_{k\sigma}^{\infty})}\otimes\bar{H}_{[P],k\sigma}%
\]
such that%
\[
\alpha(k\mu,k\nu)=CW_{k,\chi}(P,\mu,\nu)\left(  \alpha(\mu,\nu)\right)
\]
and $\nu\mapsto\alpha(\mu,\nu)$ is continuous.

The following is clear from the definitions and the results in Section
\ref{G-Plancherel Space}.

\begin{proposition}
If $\varphi\in\mathcal{C}(G)$ then $(\mu,\nu)\mapsto\Psi_{P,\mu,\nu,\chi}%
(\pi_{\bar{P},\mu,i\nu}(\varphi))$ defines a continuous cross-section of
$\mathbb{W}_{\chi}(P,\sigma)$.
\end{proposition}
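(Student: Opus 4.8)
The plan is to check the two defining properties of a continuous $N_K(A_P)$-invariant cross-section for the map $\alpha(\mu,\nu):=\Psi_{P,\mu,\nu,\chi}(\pi_{\bar P,\mu,i\nu}(\varphi))$: the equivariance $\alpha(k\mu,k\nu)=CW_{k,\chi}(P,\mu,\nu)(\alpha(\mu,\nu))$ and the continuity of $\nu\mapsto\alpha(\mu,\nu)$ into the (fixed) fibre Hilbert space $\overline{Wh_{\chi_{|N_o\cap M_P}^{-1}}(H_{\mu}^\infty)}\otimes\bar H_{[P],\mu}$. Since the base point $\mu$ enters only through the equivariance relation, it will suffice to prove continuity in $\nu$ at $\mu=\sigma$ and to repeat the identical estimate with $\sigma$ replaced by each $k\sigma$. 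At the outset I record $\pi_{\bar P,\mu,i\nu}(\varphi)\in\mathcal{S}(H_{[P],\mu})$, the $\bar P$-analogue of $\pi_{P,\sigma,i\nu}(\mathcal{C}(G))\subset\mathcal{S}(H_{[P],\sigma})$ (stated just before Lemma \ref{Totality}), so that $\Psi_{P,\mu,\nu,\chi}$ may be applied.

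For the equivariance I would substitute $T=\pi_{\bar P,\sigma,i\nu}(\varphi)$ into the defining identity $CW_{k,\chi}(P,\sigma,\nu)(\Psi_{P,\sigma,\nu,\chi}(T))=\Psi_{P,k\sigma,k\nu,\chi}(C_k(\bar P,\sigma,i\nu)(T))$ and then invoke the $\bar P$-version of the intertwining identity $C_k(\bar P,\sigma,i\nu)(\pi_{\bar P,\sigma,i\nu}(\varphi))=\pi_{\bar P,k\sigma,ik\nu}(\varphi)$ (the Proposition stated just before the compendium of Harish-Chandra's Plancherel Theorem, applied to $\bar P$ using $A_{\bar P}=A_P$). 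Together these give $CW_{k,\chi}(P,\sigma,\nu)(\alpha(\sigma,\nu))=\alpha(k\sigma,k\nu)$ for $\nu\in(\mathfrak{a}_P^*)'$; as $(\mathfrak{a}_P^*)'$ is dense and both sides will be continuous, the relation then propagates to all of $\mathfrak{a}_P^*$. This step is pure bookkeeping.

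The content is the continuity. By the coordinate formula of Lemma \ref{explicit}, $\alpha(\sigma,\nu)=\sum_{i,j}c_{ij}(\nu)\,\lambda_{i,\sigma}^\#\otimes v_{j,\sigma}^\#$ with $c_{ij}(\nu)=J_{\chi^{-1}}(P,\sigma,i\nu)(\lambda_{i,\sigma})(\pi_{\bar P,\sigma,i\nu}(\varphi)v_{j,\sigma})$, and since the $\lambda_{i,\sigma}^\#$ and $v_{j,\sigma}^\#$ are orthonormal the squared fibre norm is $\sum_{i,j}|c_{ij}(\nu)|^2$. Each $c_{ij}$ is continuous (indeed real-analytic) in $\nu$, because $\nu\mapsto\pi_{\bar P,\sigma,i\nu}(\varphi)v_{j,\sigma}$ is continuous into $H_{[P],\sigma}^\infty$ while $\nu\mapsto J_{\chi^{-1}}(P,\sigma,i\nu)(\lambda_{i,\sigma})$ is weakly holomorphic by Theorem \ref{Isomorphism}, so their pairing is continuous. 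It then remains to show that the double series converges uniformly for $\nu$ in compact sets, which forces the $\overline{Wh}\otimes\bar H_{[P],\sigma}$-valued sum to be continuous. The $i$-sum is finite, so the genuine task is to bound the $j$-sum over $K$-types uniformly in $\nu$.

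To control the tail I would insert Casimir operators exactly as in Proposition \ref{Weyl-ests}. With $v_{j,\sigma}\in H_{[P],\sigma}[\gamma_j]$ and $\pi_{[P],\sigma}((1+C_K)^p)v_{j,\sigma}=(1+\lambda_{\gamma_j})^p v_{j,\sigma}$, the factors transfer to the function, $\pi_{[P],\sigma}((1+C_K)^N)\pi_{\bar P,\sigma,i\nu}(\varphi)\pi_{[P],\sigma}((1+C_K)^p)=\pi_{\bar P,\sigma,i\nu}(\psi)$ for a $\psi\in\mathcal{C}(G)$ obtained from $\varphi$ by left and right $C_K$-differentiation. Applying Theorem \ref{Tempered-Estimate} (to $J_{\chi^{-1}}$) to bound $|c_{ij}(\nu)|\le\gamma_1(\pi_{\bar P,\sigma,i\nu}(\varphi)v_{j,\sigma})(1+\|\nu\|)^m$, dominating the fixed seminorm $\gamma_1$ by a $K$-Sobolev norm $C\|\pi_{[P],\sigma}((1+C_K)^N)\cdot\|$, and using both that $\pi_{\bar P,\sigma,i\nu}$ is unitary for real $\nu$ (so $\|\pi_{\bar P,\sigma,i\nu}(\psi)\|_{\mathrm{op}}\le\|\psi\|_{L^1(G)}$) and that $\mathcal{C}(G)\subset L^1(G)$, I expect $|c_{ij}(\nu)|\le C'(1+\|\nu\|)^m(1+\lambda_{\gamma_j})^{-p}$ with $C'$ independent of $i,j,\nu$. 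Choosing $p$ large enough, the dimension bound $\dim H_{[P],\sigma}[\gamma]\le d_\gamma^2$ together with the elliptic estimate recorded in the Notation section gives $\sum_j(1+\lambda_{\gamma_j})^{-2p}<\infty$, so $\sum_{i,j}|c_{ij}(\nu)|^2\le C''(1+\|\nu\|)^{2m}$ with uniformly small tails on compacta. This uniform summability over the infinite family of $K$-types is the one real obstacle, and it is resolved precisely because the Casimir insertion produces arbitrarily fast decay in $\lambda_{\gamma_j}$ while the tempered estimate keeps all constants independent of the $K$-type and of $\nu$. Continuity of $\alpha$ then follows as the uniform limit of continuous Hilbert-space-valued functions, and the same bounds with $\sigma$ replaced by $k\sigma$ dispatch the remaining fibres.
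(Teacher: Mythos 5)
Your proof is correct, and it takes the only route available: the paper offers no written argument for this proposition (it simply declares it ``clear from the definitions and the results in Section \ref{G-Plancherel Space}''), and your two steps --- equivariance from the defining property of $CW_{k,\chi}$ combined with the $\bar{P}$-version of the intertwining identity $C_{k}(\bar{P},\sigma,i\nu)(\pi_{\bar{P},\sigma,i\nu}(\varphi))=\pi_{\bar{P},k\sigma,ik\nu}(\varphi)$, and fibrewise continuity via the coordinate formula of Lemma \ref{explicit}, Casimir insertion as in Proposition \ref{Weyl-ests}, and the uniform bound of Theorem \ref{Tempered-Estimate} --- are exactly the ingredients from Sections \ref{G-Plancherel Space} and after that the paper's one-line assertion leaves implicit. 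In particular your handling of the one genuine analytic point (uniform-on-compacta summability over the infinite family of $K$-types, forced by the $(1+\lambda_{\gamma_j})^{-p}$ decay from $C_K$-insertion against the convergent $K$-type series) is sound.
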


Note that the bundle $\mathbb{W}_{\chi}(P,\sigma)$ is also a Hilbert
$G$-vector bundle with action $g\xi=\left(  I\otimes\bar{\pi}_{\bar{P}%
,\mu,i\nu}(g)\right)  \xi$ for $\xi\in\mathbb{W}_{\chi}(P,\sigma)_{(\mu,\nu)}%
$. This action of $G$ commutes with the action of $N_{K}(A_{P})$.

We also define the Fr\'{e}chet $N_{K}(A_{P})\times G$--vector bundle
$\mathbb{W}_{\chi}^{\infty}(P,\sigma)$ with fiber $\overline{Wh_{\chi
_{|N_{o}\cap M_{P}}^{-1}}(H_{k\sigma}^{\infty})}\otimes\bar{H}_{[P],k\sigma
}^{\infty}$ over $(k\sigma,\nu)$ We define a subspace of $\Gamma
(\mathbb{W}_{\chi}^{\infty}(P,\sigma))$, analogous to $\Gamma_{rd}%
(\mathcal{\underline{\mathcal{S}}}(HS([P],\sigma)))$ of Section
\ref{G-Plancherel Space}, $\Gamma_{rd}(\mathbb{W}_{\chi}^{\infty}(P,\sigma)),
$ consisting of the $\alpha\in\Gamma(\mathbb{W}_{\chi}^{\infty}(P,\sigma))$
such that for all $d,r\in\mathbb{Z}_{\geq0}$%
\[
\eta_{d,r}(\alpha)=\mathrm{sup}\{\left\Vert \nu\right\Vert )^{d}\left\Vert
\bar{\pi}_{k\sigma}(C_{K})^{r}\alpha(k\sigma,\nu)\right\Vert |k\in N_{K}%
(A_{P}),\nu\in\mathfrak{a}_{P}^{\ast}\}<\infty.
\]

Theorem \ref{J-estimate} says that if $\chi$ is generic then there exist,
$d\in\mathbb{Z}$ and $q$ a continuous seminorm on $H_{[P],\sigma}^{\infty}$
such that
\[
\left\vert J_{\chi}(P,\sigma,i\nu)(\lambda)(\pi_{\bar{P},\sigma,i\nu
}(g)u)\right\vert \leq q(u)a^{\rho_{o}}(1+\log\left\Vert a\right\Vert
)^{d}\left\Vert \lambda\right\Vert _{\sigma}%
\]
here the norm on $\lambda$ is the Hilbert norm coming form $(...,...)_{\sigma
}$. This implies that for each $g$ we can form a cross-section of the bundle
$\mathbb{W}_{\chi}(P,\sigma)$ given by%
\[
\Omega_{\chi}(\mu,\nu)(g)=\sum_{i,j}J_{\chi^{-1}}(P,\mu,i\nu)(\lambda_{i,\mu
})(\pi_{\bar{P},\mu,i\nu}(g)v_{j,\mu})\lambda_{i,\mu}^{\#}\otimes v_{j,\mu
}^{\#}%
\]
for $\mu\in N_{K}(A_{P})\sigma$.

Define for $\varphi\in\mathcal{C}(G)$, $\varphi_{\chi}(g)=\int_{N_{o}}%
\chi(n)^{-1}f(ng)dn$. The following is a restatement of Theorem 58 and
Corollary 62 in \cite{WPT}

\begin{theorem}
\label{first version}Let $T_{[P],[\sigma]}$ be the projection of
$\mathcal{C(}G)$ onto $\mathcal{C}(G)_{[P],[\sigma]}=L^{2}(G)_{[P],\sigma
}^{\infty}\cap\mathcal{C}(G)$ relative to the direct sum decomposition in
Theorem \ref{smoothdecomp} If $\varphi\in\mathcal{C}(G)$ is $K\times
K$--finite then $T_{[P].\sigma}(\varphi)_{\chi}(g)$ is equal to
\[
\frac{1}{d(\sigma)}\int_{N_{K}(A_{P})\times\mathfrak{a}_{P}^{\ast}%
}\left\langle \Psi_{P,\mu,\nu,\chi}(\pi_{\bar{P},k\sigma,ik\nu}(\varphi
)),\Omega_{\chi}(k\sigma,k\nu)(g)\right\rangle \mu(\sigma,i\nu)dkd\nu.
\]
and if $\varphi_{1},\varphi_{2}\in\mathcal{C}(G)_{[P],\sigma}$ are $K\times K
$ finite then%
\[
\int_{N_{o}\backslash G}\left\langle \left(  \varphi_{1}\right)  _{\chi
},\left(  \varphi_{2}\right)  _{\chi}\right\rangle dg=
\]%
\[
\frac{1}{d(\sigma)}\int_{N_{K}(A_{P})\times\mathfrak{a}_{P}^{\ast}%
}\left\langle \Psi_{P,\mu,\nu,\chi}(\pi_{\bar{P},k\sigma,ik\nu}(\varphi
_{1})),\Psi_{P,\mu,\nu,}(\pi_{\bar{P},k\sigma,ik\nu}(\varphi_{2}%
))\right\rangle \mu(\sigma,i\nu)dkd\nu.
\]

\end{theorem}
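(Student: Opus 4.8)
The plan is to prove the pointwise (first) identity directly from the Harish-Chandra Plancherel trace formula, reducing to the rank-one building block already treated in Section \ref{key-result}, and then to deduce the $L^{2}$ (second) identity from the first together with Lemma \ref{Basic}.

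First I would start from the Harish-Chandra Plancherel trace formula (the compendium theorem of Section \ref{G-Plancherel Space}) combined with Theorem \ref{smoothdecomp}. Since $\varphi$ is $K\times K$--finite, each operator $\pi_{\bar{P},k\sigma,ik\nu}(\varphi)$ has finite rank with source and range supported on a fixed finite set of $K$--types, and $T_{[P],\sigma}(\varphi)$ is the wave packet
\[
T_{[P],\sigma}(\varphi)(x)=\frac{1}{d(\sigma)}\int_{N_{K}(A_{P})\times\mathfrak{a}_{P}^{\ast}}\mathrm{tr}\bigl(\pi_{\bar{P},k\sigma,ik\nu}(x^{-1})\pi_{\bar{P},k\sigma,ik\nu}(\varphi)\bigr)\mu(\sigma,i\nu)\,dk\,d\nu .
\]
Writing $\pi_{\bar{P},k\sigma,ik\nu}(\varphi)=\sum_{a}u_{a}\otimes w_{a}^{\#}$ as a finite sum of rank-one operators, with the $u_{a},w_{a}$ $K$--finite and Schwartz in $\nu$, each summand of the trace is a matrix coefficient $\langle\pi_{\bar{P},k\sigma,ik\nu}(x^{-1})u_{a},w_{a}\rangle$, so $T_{[P],\sigma}(\varphi)$ is a finite sum of matrix-coefficient wave packets of exactly the shape of the function $\psi$ appearing in the proof of the final theorem of Section \ref{key-result}.

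Next I would apply the Whittaker transform $f\mapsto f_{\chi}$. By Proposition \ref{Fourier} the result lies in $\mathcal{C}(N_{o}\backslash G;\chi)$, and the defining $N_{o}$--integral may be interchanged with the integral over $N_{K}(A_{P})\times\mathfrak{a}_{P}^{\ast}$: the interchange is legitimate because the Schwartz decay of $\pi_{\bar{P},k\sigma,ik\nu}(\varphi)$ in $\nu$ (Proposition \ref{Weyl-ests}) together with the uniform Jacquet bound of Theorem \ref{J-estimate} dominates the integrand. Applying Theorem 57 of \cite{WPT} (the identity $\psi_{\chi}=h$ used in the proof of the final theorem of Section \ref{key-result}) to each matrix-coefficient summand converts the inner average $\int_{N_{o}}\chi(n)^{-1}\langle\pi_{\bar{P},k\sigma,ik\nu}((ng)^{-1})u_{a},w_{a}\rangle\,dn$ into $\sum_{i}J_{\chi^{-1}}(P,k\sigma,ik\nu)(\lambda_{i,k\sigma})(\pi_{\bar{P},k\sigma,ik\nu}(g)u_{a})\,\overline{J_{\chi^{-1}}(P,k\sigma,ik\nu)(\lambda_{i,k\sigma})(w_{a})}$. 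Re-summing over $a$ and comparing with the coordinate formulas of Lemma \ref{explicit} for $\Psi_{P,k\sigma,k\nu,\chi}(\pi_{\bar{P},k\sigma,ik\nu}(\varphi))$ and for $\Omega_{\chi}(k\sigma,k\nu)(g)$ identifies this inner integral with the fiber pairing $\langle\Psi_{P,k\sigma,k\nu,\chi}(\pi_{\bar{P},k\sigma,ik\nu}(\varphi)),\Omega_{\chi}(k\sigma,k\nu)(g)\rangle$, which is the first assertion.

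The second identity then follows from the first with essentially no further analysis. Since $\varphi_{1}\in\mathcal{C}(G)_{[P],\sigma}$ we have $T_{[P],\sigma}(\varphi_{1})=\varphi_{1}$, so the first identity expresses $(\varphi_{1})_{\chi}(g)$ as the $\mu$--weighted fiber integral of $\langle\Psi_{P,k\sigma,k\nu,\chi}(\pi_{\bar{P},k\sigma,ik\nu}(\varphi_{1})),\Omega_{\chi}(k\sigma,k\nu)(g)\rangle$. Substituting this into $\int_{N_{o}\backslash G}(\varphi_{1})_{\chi}(g)\overline{(\varphi_{2})_{\chi}(g)}\,d\bar{g}$ and interchanging (again justified by Theorem \ref{J-estimate} and the decay in $\nu$), the $g$--integral acts only on the $\Omega_{\chi}$ factor; writing it out in the basis of Lemma \ref{explicit} and applying Lemma \ref{Basic} to each term in the form $\int_{N_{o}\backslash G}J_{\chi^{-1}}(P,k\sigma,ik\nu)(\lambda_{i,k\sigma})(\pi_{\bar{P},k\sigma,ik\nu}(g)v_{j})\,(\varphi_{2})_{\chi}(g)\,d\bar{g}=J_{\chi^{-1}}(P,k\sigma,ik\nu)(\lambda_{i,k\sigma})(\pi_{\bar{P},k\sigma,ik\nu}(\varphi_{2})v_{j})$ replaces $\Omega_{\chi}(k\sigma,k\nu)(g)$ by $\Psi_{P,k\sigma,k\nu,\chi}(\pi_{\bar{P},k\sigma,ik\nu}(\varphi_{2}))$, yielding the claimed formula.

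The main obstacle is the identification in the third paragraph: one must track the complex conjugations and the appearance of $\chi^{-1}$ rather than $\chi$ (the phenomenon flagged in the Introduction, reflecting that the multiplicity space is the conjugate representation), and check that the bar-spaces $\overline{Wh_{\chi_{|N_{o}\cap M_{P}}}(H_{\sigma}^{\infty})}$ and $\bar{H}_{[P],k\sigma}$ in the fiber, the $(\ldots,\ldots)_{\sigma}$--orthonormality of the $\lambda_{i,k\sigma}$, and the normalization $1/d(\sigma)$ all combine to give exactly the stated pairing. The remaining difficulty is purely the justification of the interchanges of the $N_{o}$-- and Plancherel integrals for Schwartz (rather than compactly supported) data, which is handled by Proposition \ref{Weyl-ests}, Theorem \ref{J-estimate} and Proposition \ref{Fourier}.
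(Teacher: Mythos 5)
Your overall strategy is close to the only internal derivation available: the paper itself gives no proof of this theorem, stating it as a restatement of Theorem 58 and Corollary 62 of \cite{WPT}, and the natural route is exactly the one you outline — write the projection of a $K\times K$--finite $\varphi$ as a finite sum of matrix-coefficient wave packets, convert each by the wave-packet identity of Section \ref{key-result}, compare with Lemma \ref{explicit}, and then obtain the inner-product formula from the pointwise one via Lemma \ref{Basic}. Your final paragraph (the deduction of the second identity) is essentially sound, except that Theorem \ref{J-estimate} controls the Jacquet functionals only for $\nu$ in compacta, so to dominate the interchange over all of $\mathfrak{a}_{P}^{\ast}$ you must also invoke the polynomial growth in $\nu$ from Theorem \ref{Tempered-Estimate} together with the rapid decay of $\Psi_{P,\mu,\nu,\chi}(\pi_{\bar{P},\mu,i\nu}(\varphi_{1}))$ in $\nu$.

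The genuine gap is in your third paragraph. You justify interchanging the $N_{o}$--integral defining $(\cdot)_{\chi}$ with the integral over $N_{K}(A_{P})\times\mathfrak{a}_{P}^{\ast}$ by domination, and then apply ``Theorem 57'' at each fixed $\nu$ to the inner average $\int_{N_{o}}\chi(n)^{-1}\langle\pi_{\bar{P},k\sigma,ik\nu}((ng)^{-1})u_{a},w_{a}\rangle\,dn$. Neither step can work. Matrix coefficients of the tempered representations $\pi_{\bar{P},k\sigma,ik\nu}$ are bounded by $\Xi$ (Proposition \ref{tempered-CK}) uniformly in $\nu$, and $\int_{N_{o}}\Xi(ng)\,dn=\infty$; so no dominating function exists, Fubini is unavailable, and the fixed-$\nu$ inner average is not even an absolutely convergent integral — this is precisely why the Jacquet integral at imaginary parameter exists only by holomorphic continuation (Theorem \ref{Isomorphism}). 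Moreover, Theorem \ref{J-estimate} bounds the already-continued functionals $J_{\chi^{-1}}(P,\sigma,i\nu)(\lambda)(\pi_{\bar{P},\sigma,i\nu}(ak)u)$, not the integrand of the $N_{o}$--integral, so citing it as a dominating function is a category error; and Theorem 57 of \cite{WPT} (the identity $\psi_{\chi}=h$ used in Section \ref{key-result}) is a statement about the $\nu$--integrated wave packets, not a fixed-$\nu$ Fourier-coefficient identity, so it cannot supply the pointwise conversion you invoke. The repair is to skip the interchange entirely: apply the wave-packet identity of Section \ref{key-result} directly to each summand $\psi_{a}(x)=\int a_{ij}(\nu)\langle\pi_{\bar{P},k\sigma,ik\nu}(x^{-1})v_{i},v_{j}\rangle\mu(\sigma,i\nu)\,d\nu$, obtaining $(\psi_{a})_{\chi}=h_{a}$, then resum and compare with Lemma \ref{explicit}. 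Even then note that the Section \ref{key-result} theorem is stated for $C_{c}^{\infty}$ coefficients while your coefficients $a_{ij}$ are merely Schwartz class, so an additional continuity-and-density argument in the Schwartz topology is required (in \cite{WPT} this extension is essentially the content of Theorem 58, i.e.\ of the statement being proved), which your write-up passes over silently.
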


Let $L^{2}(\mathbb{W}(P,\sigma))$ denote the Hilbert space completion of the
space of all $f\in\Gamma_{rd}(\mathbb{W}^{\infty}(P,\sigma))$ such that%
\[
\int_{N_{K}(\mathfrak{a}_{P}^{\ast})\times\mathfrak{a}_{P}^{\ast}}\left\langle
f(k\sigma,\nu),f(k\sigma,\nu)\right\rangle dk\mu(\sigma,i\nu)d\nu<\infty.
\]

Define, for $\nu\in\mathfrak{a}_{P}^{\ast}$ and $\mu\in N_{K}(\mathfrak{a}%
_{P})\sigma,$ and $\varphi\in\mathcal{C}(G)$, $S_{[P],\sigma,\chi}%
(\varphi)(\mu,\nu)=\Psi_{P,\mu,\nu,\chi}(\pi_{\bar{P},\mu,i\nu}(\varphi))$.
Then $\Psi_{P,\mu,\nu,\chi}(\pi_{\bar{P},\mu,i\nu}(\varphi))\in\Gamma
_{rd}(\mathbb{W}^{\infty}(P,\sigma))^{N_{K}(A_{P})}. $

\begin{theorem}
\label{second-version}The map%
\[
S_{[P],\sigma,\chi}:\mathcal{C}(G)_{[P],\sigma}\rightarrow\Gamma
_{rd}(\mathbb{W}^{\infty}(P,\sigma))^{N_{K}(A_{P})}%
\]
extends to a surjection of \thinspace$L^{2}(G)_{[P],\sigma}$ onto $\Gamma
^{2}(\mathbb{W}_{\chi}(P,\sigma))^{N_{K}(\mathfrak{a}_{P})}$.
\end{theorem}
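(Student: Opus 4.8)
The plan is to realize $S_{[P],\sigma,\chi}$ as the composition of the Harish-Chandra isomorphism with the fibrewise Whittaker reduction $\Psi$, and then to transport the $L^{2}$-identity of Theorem \ref{first version} through this composition. On a $K\times K$-finite $\varphi\in\mathcal{C}(G)_{[P],\sigma}$ we have $S_{[P],\sigma,\chi}(\varphi)(\mu,\nu)=\Psi_{P,\mu,\nu,\chi}(\pi_{\bar{P},\mu,i\nu}(\varphi))$, so $S_{[P],\sigma,\chi}$ is obtained from the unitary $S_{[P],\sigma}$ of Theorem \ref{smoothdecomp} (with the $\bar{P}$ convention matching $H_{\bar{P},\sigma,\nu}=H_{[P],\sigma}$) by applying $\Psi$ in each fibre. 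The remarks preceding the theorem already record that the image lands in $\Gamma_{rd}(\mathbb{W}^{\infty}(P,\sigma))^{N_{K}(A_{P})}$, and the $N_{K}(A_{P})$-equivariance is precisely the intertwining $CW_{k,\chi}(P,\mu,\nu)\circ\Psi_{P,\mu,\nu,\chi}=\Psi_{P,k\mu,k\nu,\chi}\circ C_{k}(\bar{P},\mu,i\nu)$ built into the definition of $\mathbb{W}_{\chi}(P,\sigma)$.

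First I would establish the metric properties. The second displayed formula of Theorem \ref{first version} is exactly the identity $\langle S_{[P],\sigma,\chi}(\varphi_{1}),S_{[P],\sigma,\chi}(\varphi_{2})\rangle_{\Gamma^{2}(\mathbb{W})}=d(\sigma)\int_{N_{o}\backslash G}\langle(\varphi_{1})_{\chi},(\varphi_{2})_{\chi}\rangle\,d\bar{g}$ for $K\times K$-finite $\varphi_{1},\varphi_{2}$. Since $\varphi\mapsto\varphi_{\chi}$ is continuous in the Schwartz topology by Proposition \ref{Fourier} and both sides are continuous there, the identity persists on all of $\mathcal{C}(G)_{[P],\sigma}$. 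This exhibits $S_{[P],\sigma,\chi}$, up to the scalar $\sqrt{d(\sigma)}$, as an isometry from the $L^{2}$-closure of $T_{\chi}(\mathcal{C}(G)_{[P],\sigma})$ into $\Gamma^{2}(\mathbb{W}_{\chi}(P,\sigma))$, so its range is closed. The subtle point here is that the pointwise Jacquet functionals $J_{\chi^{-1}}(P,\mu,i\nu)(\lambda_{i})$ are \emph{unbounded} on $H_{[P],\mu}$, so the required boundedness is not a fibrewise estimate but is forced only after integrating against $\mu(\sigma,i\nu)$, in the same spirit as the convergence of the trace-class series following Proposition \ref{Weyl-ests}.

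Next I would prove density of the range, which together with closedness gives surjectivity. Because $S_{[P],\sigma,\chi}=\Psi\circ S_{[P],\sigma}$ and Theorem \ref{density} already gives that $S_{[P],\sigma}(\mathcal{C}(G)_{[P],\sigma})$ is dense in $\Gamma^{2}(\mathbf{HS}([P],\sigma))^{N_{K}(A_{P})}$, it suffices to see that the fibrewise map $\Psi_{P,\mu,\nu,\chi}$ is onto $\overline{Wh_{\chi_{|N_{o}\cap M_{P}}}(H_{\mu}^{\infty})}\otimes\bar{H}_{[P],\mu}$. This is what Lemma \ref{surjective} supplies: reading the coordinate formula of Lemma \ref{explicit} column by column, surjectivity of $\Lambda_{\chi,\sigma}$ says every Whittaker-functional vector $\sum_{i}c_{i}\lambda_{i}^{\#}$ is realized, and letting the second index range over a set where $\pi(\varphi)$ fills $\mathcal{S}(H_{[P],\mu})$ (Lemma \ref{Totality}) fills out the whole tensor. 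The Theorem immediately following Lemma \ref{surjective} guarantees that the associated wave packets are genuinely Whittaker--Schwartz, so the sections produced really come from $\mathcal{C}(G)_{[P],\sigma}$; $L^{2}$-density then follows from the rapid decrease encoded in $\Gamma_{rd}$ against the polynomial bound on $\mu(\sigma,i\nu)$, as in the $L^{2}$-convergence argument for $T(\alpha)$ given earlier.

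Finally I would assemble the pieces: the first formula of Theorem \ref{first version} displays the wave-packet map $\alpha\mapsto\frac{1}{d(\sigma)}\int\langle\alpha,\Omega_{\chi}\rangle\mu$ as a left inverse to $S_{[P],\sigma,\chi}$ at the level of Whittaker transforms, which pins down the extension to $L^{2}(G)_{[P],\sigma}$ and forces it to be onto once the range is known to be closed and dense. The hard part, in my view, is not the bookkeeping but the two analytic inputs feeding it: that the globally-integrated Jacquet functionals give an extendable (hence, by the closed graph theorem, bounded) map out of $L^{2}(G)_{[P],\sigma}$ despite being pointwise unbounded, and that the range exhausts all of $\Gamma^{2}(\mathbb{W}_{\chi}(P,\sigma))$ rather than merely a dense subspace. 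Both are ultimately consequences of the isometry of Theorem \ref{first version}, so the genuine analysis is already isolated there; what remains is to check that the isometry and the density are compatible and together exhaust the target bundle.
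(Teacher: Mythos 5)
Your proposal is correct and follows essentially the same route as the paper: the isometry is read off from the second formula of Theorem \ref{first version} (so surjectivity reduces to density of the range), and density is obtained by writing $S_{[P],\sigma,\chi}$ in the coordinates of Lemma \ref{explicit} and invoking Lemma \ref{surjective} together with the wave-packet identity $S(T(\beta))=\beta$ from the proof of Theorem \ref{density} to realize all $K\times K$--finite, compactly supported smooth invariant sections. Your extra remarks (closed range via the isometry, the pointwise unboundedness of the Jacquet functionals, the left inverse from the first formula of Theorem \ref{first version}) are consistent elaborations of the same argument rather than a different method.
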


\begin{proof}
The previous Theorem implies that the map is unitary thus it is enough to show
that the image is dense. We write out the map $S_{[P],\sigma,\chi}$ \ using
Lemma \ref{explicit} as%
\[
S_{[P],\sigma,\chi}(\varphi)=\sum_{i,j}J_{\chi^{-1}}(P,\mu,i\nu)(\lambda
_{i,\mu})(\pi_{\bar{P},\mu,i\nu}(\varphi)v_{j,\mu})\lambda_{i,\mu}^{\#}\otimes
v_{j,\mu}^{\#}.
\]
Let $F\subset\hat{K}$ be a finite set. We now assume that $\varphi$ is
$K\times K$--finite and transforms by elements of $F$ on the left and right.
Then%
\[
\sum_{\gamma_{j}\in F}J_{\chi^{-1}}(P,\mu,i\nu)(\lambda_{i,\mu})(\pi_{\bar
{P},\mu,i\nu}(\varphi)v_{j,\mu})\lambda_{i,\mu}^{\#}\otimes v_{j,\mu}^{\#}.
\]
We have seen that if $\beta\in\mathcal{H}(P,\sigma))$ then $S(T(\beta))=\beta$
(see the proof of Theorem \ref{density}). This and Lemma \ref{surjective}
implies that the image contains all such $K\times K$ finite compactly
supported smooth elements of $\Gamma(\mathbb{W}_{\chi}(P,\sigma))^{N_{K}%
(A_{P})}$.
\end{proof}

\section{The Hilbert bundle version of the Whittaker Plancherel Theorem}

Let $M$ be a real reductive group and $N$ the unipotent radical of a minimal
parabolic subgroup. If $\chi$ is a generic character of $N$ we denote by
$\mathcal{E}_{2,\chi}(M)$ the set of square integrable integrable
representations of $M$, $(\sigma,H_{\sigma})$, such that $Wh_{\chi^{-1}%
}(H_{\sigma}^{\infty})\neq0$. We will use the notation of Theorem
\ref{Abs-Planch} modified using the equivalence relation in the statement on
$\mathcal{E}_{2,\chi}(M)$ the set of classes will be denoted $\overline
{\mathcal{E}}_{2,\chi}(M)$.

Fix, $\chi$, a generic unitary character of $N_{o}$ and $P$ a standard
parabolic subgroup of $G$. Let for $[\sigma]\in\overline{\mathcal{E}}%
_{2,\chi_{|N_{o}\cap M_{P}}}(^{o}M_{P})$, $\mathcal{C(}N_{o}\backslash
G,\chi)_{[P],\sigma}$ be the completion of the space $\mathcal{C}%
_{[P],[\sigma\}}(G)_{\chi}=\{f_{\chi}|f\in\mathcal{C}_{[P],[\sigma\}}(G)\}$
here is as in Section \ref{G-Plancherel Space}. In \cite{WPT} Theorem 34 we proved

\begin{theorem}
Assume that $\chi$ is generic then $\mathcal{C(}N_{o}\backslash G,\chi)$ is
the closure in $\mathcal{C(}N_{o}\backslash G,\chi)$ of the orthogonal direct
sum%
\[
\oplus_{\lbrack P]\in\mathcal{P}(G),\left[  \sigma\right]  \in\overline
{\mathcal{E}}_{2,\chi}({}^{o}M_{P})}\mathcal{C(}N_{o}\backslash G,\chi
)_{[P],\sigma}.
\]

\end{theorem}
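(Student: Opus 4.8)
The goal is to decompose the Whittaker Schwartz space $\mathcal{C}(N_o\backslash G,\chi)$ as the closure of the orthogonal direct sum of the pieces $\mathcal{C}(N_o\backslash G,\chi)_{[P],\sigma}$. The plan is to transport the decomposition already established for $\mathcal{C}(G)$ in Theorem \ref{Abs-Planch} and Theorem \ref{smoothdecomp} through the Whittaker-Fourier transform $T_\chi:\mathcal{C}(G)\to\mathcal{C}(N_o\backslash G,\chi)$ of Proposition \ref{Fourier}. The two facts I would isolate at the outset are: (i) by definition $\mathcal{C}(N_o\backslash G,\chi)_{[P],\sigma}$ is the closure of $\{f_\chi \mid f\in\mathcal{C}_{[P],[\sigma]}(G)\}$, so it is by construction the image under $T_\chi$ of the $([P],\sigma)$-summand of $\mathcal{C}(G)$; and (ii) Theorem \ref{Abs-Planch} gives that $K\times K$-finite elements of $\mathcal{C}(G)$ decompose as a (finite, for each fixed $K$-type pair) sum $\sum_{[P],[\sigma]}T_{[P],\sigma}(\varphi)$ with components in the respective summands.

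First I would show surjectivity of the map onto the span of the pieces: since $T_\chi$ is continuous and $\mathcal{C}(G)$ is spanned (in the closure) by its $K\times K$-finite vectors, and since each such vector decomposes via Theorem \ref{Abs-Planch} into components lying in the $\mathcal{C}_{[P],[\sigma]}(G)$, applying $T_\chi$ and using continuity shows every $f_\chi$ for $f\in\mathcal{C}(G)$ lies in the closure of $\sum_{[P],[\sigma]}\mathcal{C}(N_o\backslash G,\chi)_{[P],\sigma}$. Because $T_\chi(\mathcal{C}(G))$ is dense in $\mathcal{C}(N_o\backslash G,\chi)$ (this is the content of the surjectivity direction built into Proposition \ref{Fourier} together with the totality results), this establishes that the sum is dense in $\mathcal{C}(N_o\backslash G,\chi)$.

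The more delicate point is \emph{orthogonality} of the pieces, which is where I expect the main obstacle. Orthogonality cannot be read off directly because the Whittaker-Fourier transform $T_\chi$ is not an isometry on $\mathcal{C}(G)$ — whole summands of $\mathcal{C}(G)$ (the non-generic ones, i.e. those $[P],[\sigma]$ with $Wh_{\chi^{-1}}(H_\sigma^\infty)=0$) are annihilated by $T_\chi$. So I would prove orthogonality directly using the inner product formula. The key tool is Theorem \ref{second-version}: the map $S_{[P],\sigma,\chi}$ extends to a \emph{unitary} isomorphism $L^2(G)_{[P],\sigma}\to\Gamma^2(\mathbb{W}_\chi(P,\sigma))^{N_K(\mathfrak{a}_P)}$, with the inner product on the target computed fiberwise against the Plancherel measure $\mu(\sigma,i\nu)$ as in Theorem \ref{first version}. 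Concretely, for $K\times K$-finite $\varphi_1\in\mathcal{C}_{[P],\sigma}(G)$ and $\varphi_2\in\mathcal{C}_{[Q],\tau}(G)$ with distinct data, I would express $\langle (\varphi_1)_\chi,(\varphi_2)_\chi\rangle_{N_o\backslash G}$ using the second formula of Theorem \ref{first version}, which realizes this pairing as an integral of $\langle \Psi_{P,\mu,\nu,\chi}(\pi_{\bar P,\cdot}(\varphi_1)),\Psi_{P,\mu,\nu,\chi}(\pi_{\bar P,\cdot}(\varphi_2))\rangle$ over $N_K(A_P)\times\mathfrak{a}_P^\ast$. For distinct associativity classes $[P]\neq[Q]$ the supports in the parameter space are disjoint, and for the same class but inequivalent $\sigma\not\sim\tau$ the orthogonality of the distinct discrete-series data makes the integrand vanish; so the cross terms are zero.

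Finally I would assemble these two facts. Density of $\sum_{[P],[\sigma]}\mathcal{C}(N_o\backslash G,\chi)_{[P],\sigma}$ together with mutual orthogonality of the summands gives that $\mathcal{C}(N_o\backslash G,\chi)$ is the closure of the orthogonal direct sum, which is exactly the assertion. One subtlety I would flag is that the orthogonality computation as stated in Theorem \ref{first version} is given for $K\times K$-finite $\varphi_1,\varphi_2$ within a single $\mathcal{C}(G)_{[P],\sigma}$; I would need to note that the bilinearity and continuity of the $N_o\backslash G$ pairing, combined with the density of $K\times K$-finite vectors, let me extend the vanishing of cross terms from the $K\times K$-finite level to the full closures, and that the non-generic summands (annihilated by $T_\chi$) simply contribute nothing and therefore do not disturb the orthogonality of the surviving generic pieces.
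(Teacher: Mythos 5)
The first thing to note is that the paper does not prove this theorem at all: it is quoted from the author's earlier work (``In \cite{WPT} Theorem 34 we proved\dots''), so there is no internal proof to compare yours against, and your argument has to stand on what the present paper actually establishes. It does not, and the failure is concentrated in the density step. Your argument reduces the theorem to the claim that $T_{\chi}(\mathcal{C}(G))$ is dense in $\mathcal{C}(N_{o}\backslash G,\chi)$, and you attribute that claim to ``the surjectivity direction built into Proposition \ref{Fourier} together with the totality results.'' Proposition \ref{Fourier} has no surjectivity direction: it asserts only that the integral defining $f_{\chi}$ converges, that $f_{\chi}\in\mathcal{C}(N_{o}\backslash G,\chi)$, and that $T_{\chi}$ is continuous. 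Lemma \ref{Totality} concerns $\pi_{P,\sigma,i\nu}(\mathcal{C}(G))=\mathcal{S}(H_{[P],\sigma})$, i.e.\ images of operators on induced representations, and says nothing about the image of $T_{\chi}$ inside the Whittaker Schwartz space. Density of $T_{\chi}(\mathcal{C}(G))$ is in fact the essential content of the theorem: it is a genuine surjectivity theorem proved in \cite{WPT}, it is where genericity of $\chi$ does real work (your density argument never invokes genericity), and it is not a formal consequence of continuity. Indeed the obvious attempted preimage $f(g)=\phi(n(g))\chi(n(g))h(a(g)k(g))$, with $\phi\in C_{c}^{\infty}(N_{o})$, $\int\phi\,dn=1$ and $g=n(g)a(g)k(g)$ the Iwasawa decomposition, does satisfy $f_{\chi}=h$ but is in general not Harish-Chandra Schwartz: the defining estimate of $\mathcal{C}(N_{o}\backslash G,\chi)$ permits growth of order $a^{\rho_{o}}$, while membership in $\mathcal{C}(G)$ forces decay of order $\left\vert g\right\vert^{-\frac{1}{2}}$ on the same sets. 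With what the paper provides, your argument proves only that the closure of the orthogonal sum contains the closure of $T_{\chi}(\mathcal{C}(G))$; it cannot conclude that this is all of $\mathcal{C}(N_{o}\backslash G,\chi)$.

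The orthogonality step is also not yet an argument as written: the second formula of Theorem \ref{first version} pairs two $K\times K$--finite functions lying in the \emph{same} block $\mathcal{C}(G)_{[P],\sigma}$, so there is no integrand available for a cross term, and ``disjoint supports in the parameter space'' is not something the paper states. Unlike the density gap, this one is repairable from the paper alone, and more cheaply than you propose: for $K\times K$--finite $f_{1}\in\mathcal{C}(G)_{[P],\sigma}$ and $f_{2}\in\mathcal{C}(G)_{[Q],\tau}$, unfolding the $N_{o}$--integration (Fubini being justified by the same $\Xi$--estimates that underlie Proposition \ref{Fourier}) gives
\[
\left\langle (f_{1})_{\chi},(f_{2})_{\chi}\right\rangle _{L^{2}(N_{o}\backslash G)}=\int_{N_{o}}\chi(n)\left\langle f_{1},L_{n^{-1}}f_{2}\right\rangle _{L^{2}(G)}\,dn,
\]
and since the blocks $L^{2}(G)_{[P],\sigma}$ of Theorems \ref{Abs-Planch} and \ref{smoothdecomp} are mutually orthogonal and stable under left translation, the inner pairing vanishes identically for distinct blocks; hence all cross terms are zero, with no Whittaker machinery at all. (Your remaining observation, that blocks with $Wh_{\chi^{-1}}(H_{\sigma}^{\infty})=0$ are annihilated by $T_{\chi}$, does follow from the first formula of Theorem \ref{first version}, whose right-hand side is an empty sum in that case.) So the containment and orthogonality halves of your proposal can be salvaged, but the density half rests on the surjectivity of the Whittaker transform, a theorem of \cite{WPT} that this paper neither states nor proves and that cannot be extracted from Proposition \ref{Fourier}.
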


Let $P$ be a standard parabolic subgroup of $G$ and Let $\sigma$ be an
irreducible, square integrable representation of ${}^{o}M_{P}$. Let $\left\{
\lambda_{i,\sigma}\right\}  $ and $\left\{  v_{i,\sigma}\right\}  $ be
respectively orthonormal bases of $Wh_{\chi_{|N_{o}\cap M_{P}}^{-1}}%
(H_{\sigma}^{\infty})$ and $H_{[P],\sigma}$ as in the previous section. If
$\varphi\in\mathcal{C(}N_{o}\backslash G,\chi),\nu\in\mathfrak{a}_{P}^{\ast}$
set%
\[
a(\varphi,\sigma,\nu)_{ij}=\int_{N_{o}\backslash G}J_{\chi^{-1}}(P,\sigma
,i\nu)(\lambda_{i,\sigma})(\pi_{\bar{P},\sigma,i\nu}(g)v_{i,\sigma}%
)\varphi(g)d\bar{g}.
\]
Then arguing as in Proposition \ref{Weyl-ests} using $R_{(I+C_{K})^{r}}$ one
proves that there exists for each $r\in\mathbb{Z}_{>0}$ a continuous
semi-norm, $q_{\sigma,r}$ on $\mathcal{C(}N_{o}\backslash G,\chi)$ such that
\[
\left\vert a(\varphi,\sigma,\nu)_{ij}\right\vert \leq q_{\sigma,r}%
(\varphi)(1+\lambda_{\gamma_{j}})^{-r}\text{.}%
\]
This allows us to define for $\varphi\in\mathcal{C(}N_{o}\backslash G,\chi)$%
\[
SW_{\left[  P\right]  ,\sigma,\chi}(\varphi)(\mu,\nu)=\sum_{i,j}a(\varphi
,\mu,\nu)_{ij}\lambda_{i,\mu}^{\#}\otimes v_{i,\mu}^{\#}%
\]
for $\mu\in N_{K}(A_{P})\sigma$ and $\nu\in\mathfrak{a}_{P}^{\ast}$. Also
Lemma \ref{Basic} implies that if $\varphi=f_{\chi}$ for $f\in\mathcal{C}(G)$
then
\[
SW_{\left[  P\right]  ,\sigma,\chi}(\varphi)=S_{[P],\sigma,\chi}(f)\in
\Gamma^{2}(\mathbb{W}_{\chi}(P,\sigma))^{N_{K}(A_{P})}.
\]
Thus we have the main result

\begin{theorem}
\label{BundleFormWhitt}The map
\[
SW_{\left[  P\right]  ,\sigma,\chi}:\mathcal{C(}N_{o}\backslash G,\chi
)_{[P]},_{\sigma}\rightarrow{}_{rd}\Gamma^{2}(\mathbb{W}_{\chi}^{\infty
}(P,\sigma))^{N_{K}(A_{P})}%
\]
extends to a unitary $G$--isomorphism of the right regular action of $G$ on
$L^{2}\mathcal{(}N_{o}\backslash G,\chi)_{[P]},_{\sigma}$ (the closure of
$\mathcal{C(}N_{o}\backslash G,\chi)_{[P]},_{\sigma}$ in $L^{2}\mathcal{(}%
N_{o}\backslash G,\chi)$) with the space $\Gamma^{2}(\mathbb{W}_{\chi
}(P,\sigma)$)$^{N_{K}(A_{P})}$ with inverse given by%
\[
\alpha\mapsto\frac{1}{d(\sigma)}\int_{N_{K}(A_{P})\times\mathfrak{a}_{P}%
^{\ast}}\left\langle \alpha(k\sigma,k\nu),\Omega_{\chi}(k\sigma,k\nu
)(\cdot)\right\rangle \mu(k\sigma,i\nu)dkd\nu
\]
for $\alpha\in$ ${}_{rd}\Gamma^{2}(\mathbb{W}_{\chi}^{\infty}(P,\sigma
))^{N_{K}(A_{P})}$.
\end{theorem}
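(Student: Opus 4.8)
The plan is to assemble the statement from the two displayed identities of Theorem \ref{first version} together with the surjectivity of Theorem \ref{second-version}, transporting everything from $G$ to $N_o\backslash G$ by means of the identity $SW_{[P],\sigma,\chi}(f_\chi)=S_{[P],\sigma,\chi}(f)$, which holds for $f\in\mathcal{C}(G)_{[P],\sigma}$ by Lemma \ref{Basic} and is recorded just before the theorem. Since $\mathcal{C}(N_o\backslash G,\chi)_{[P],\sigma}$ is by definition the completion of $\{f_\chi\mid f\in\mathcal{C}_{[P],[\sigma]}(G)\}$, it suffices to verify the isometry, the $G$-equivariance, and the inversion formula on the dense subspace of such $f_\chi$ with $f$ additionally $K\times K$--finite, and then to extend each assertion by continuity. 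First I would fix such an $f$ and apply the second displayed equation of Theorem \ref{first version} with $\varphi_1=\varphi_2=f$, obtaining
\[
\int_{N_o\backslash G}\langle f_\chi,f_\chi\rangle\,dg=\frac{1}{d(\sigma)}\int_{N_K(A_P)\times\mathfrak{a}_P^{\ast}}\langle\Psi_{P,\mu,\nu,\chi}(\pi_{\bar P,k\sigma,ik\nu}(f)),\Psi_{P,\mu,\nu,\chi}(\pi_{\bar P,k\sigma,ik\nu}(f))\rangle\,\mu(\sigma,i\nu)\,dk\,d\nu .
\]
Because $SW_{[P],\sigma,\chi}(f_\chi)=S_{[P],\sigma,\chi}(f)=\Psi_{P,\cdot,\cdot,\chi}(\pi_{\bar P,\cdot}(f))$, the right-hand side is exactly the squared norm of $SW_{[P],\sigma,\chi}(f_\chi)$ in $\Gamma^2(\mathbb{W}_\chi(P,\sigma))^{N_K(A_P)}$ (the factor $1/d(\sigma)$ being absorbed into the normalization of that inner product). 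Hence $SW_{[P],\sigma,\chi}$ is norm-preserving on a dense subspace and extends to an isometry of $L^2(N_o\backslash G,\chi)_{[P],\sigma}$ into $\Gamma^2(\mathbb{W}_\chi(P,\sigma))^{N_K(A_P)}$.

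Surjectivity then follows from Theorem \ref{second-version}: the image of $SW_{[P],\sigma,\chi}$ on $\mathcal{C}(N_o\backslash G,\chi)_{[P],\sigma}$ coincides, via $SW(f_\chi)=S(f)$ and the density of $\{f_\chi\}$, with the image of $S_{[P],\sigma,\chi}$ on $\mathcal{C}(G)_{[P],\sigma}$, which is dense in $\Gamma^2(\mathbb{W}_\chi(P,\sigma))^{N_K(A_P)}$. An isometry with dense image is unitary, giving the asserted Hilbert-space isomorphism.

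For the $G$-equivariance I would compute directly with the coefficients $a(\varphi,\mu,\nu)_{ij}$ entering the definition of $SW_{[P],\sigma,\chi}$. Replacing $\varphi$ by its right translate and using the right-invariance of $d\bar g$ on $N_o\backslash G$ shifts a factor $\pi_{\bar P,\mu,i\nu}(g_0^{-1})$ onto $v_{j,\mu}$; expanding in the orthonormal basis $\{v_{l,\mu}\}$ this is precisely the operator $I\otimes\bar\pi_{\bar P,\mu,i\nu}(g_0)$ acting on the second tensor factor $v_{j,\mu}^{\#}\in\bar H_{[P],\mu}$, which is the $G$-action defining $\mathbb{W}_\chi(P,\sigma)$. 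This is the step where the conjugate representation enters, and it is exactly the passage between $\chi$ and $\chi^{-1}$ flagged in the introduction; tracking the pairing $(\cdots\mid\cdots)_\nu$ and the maps $\tau^{\#}_{P,\sigma,\nu,\chi}$ through the conjugation is the part I expect to require the most care.

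Finally, the inversion formula is read off from the first displayed equation of Theorem \ref{first version}, which exhibits $(T_{[P],\sigma}\varphi)_\chi$ as the $\Omega_\chi$-integral of $S_{[P],\sigma,\chi}(T_{[P],\sigma}\varphi)$; this identifies the stated integral operator with $SW_{[P],\sigma,\chi}^{-1}$ on the $K\times K$--finite dense subspace. To conclude on all of ${}_{rd}\Gamma^2(\mathbb{W}_\chi^{\infty}(P,\sigma))^{N_K(A_P)}$, I would invoke the estimate of Theorem \ref{J-estimate}, which bounds $\Omega_\chi$ by $a^{\rho_o}(1+\log\lVert a\rVert)^d\lVert\lambda\rVert_\sigma$ and so guarantees that the $\Omega_\chi$-integral converges and extends continuously, representing the inverse everywhere. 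The two genuine technical points are thus the conjugate-space bookkeeping in the equivariance step and the continuous extension of the inversion integral beyond the $K\times K$--finite part; the isometry and surjectivity themselves are immediate once Theorems \ref{first version} and \ref{second-version} are in hand.
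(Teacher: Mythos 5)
Your proposal is correct and follows essentially the same route as the paper, whose entire proof is the single line ``Combine Theorems \ref{first version} and \ref{second-version}''; you have simply made explicit the density, equivariance, and extension arguments that the paper leaves implicit in that combination.
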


\begin{proof}
Combine Theorems \ref{first version} and \ref{second-version}.
\end{proof}

\begin{corollary}
As a representation of $G$, $L^{2}(N_{o}\backslash G,\chi)$ is equivalent
with
\[
\oplus_{\lbrack P]\in\mathcal{P}(G),\left[  \sigma\right]  \in\overline
{\mathcal{E}}_{2,\chi}({}^{o}M_{P})}\Gamma^{2}(\mathbb{W}_{\chi}%
(P,\sigma))^{N_{K}(A_{P})}.
\]

\end{corollary}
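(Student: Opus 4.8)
The plan is to assemble the Corollary from the two preceding theorems, upgrading the Schwartz-space decomposition of \cite{WPT} Theorem 34 (restated above) to $L^{2}$ and then applying the bundle isomorphism of Theorem \ref{BundleFormWhitt} summand by summand. First I would pass from the Schwartz space to $L^{2}$. The space $\mathcal{C}(N_{o}\backslash G,\chi)$ is dense in $L^{2}(N_{o}\backslash G,\chi)$, exactly as $\mathcal{C}(G)$ is dense in $L^{2}(G)$; by \cite{WPT} Theorem 34 the algebraic orthogonal sum of the subspaces $\mathcal{C}(N_{o}\backslash G,\chi)_{[P],\sigma}$ is dense in $\mathcal{C}(N_{o}\backslash G,\chi)$, hence dense in $L^{2}(N_{o}\backslash G,\chi)$. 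Since these subspaces are mutually orthogonal for the $L^{2}$ inner product, taking closures gives the Hilbert space orthogonal direct sum
\[
L^{2}(N_{o}\backslash G,\chi)=\bigoplus_{[P]\in\mathcal{P}(G),\,[\sigma]\in\overline{\mathcal{E}}_{2,\chi}({}^{o}M_{P})}L^{2}(N_{o}\backslash G,\chi)_{[P],\sigma},
\]
where each summand is the closure of $\mathcal{C}(N_{o}\backslash G,\chi)_{[P],\sigma}$ in $L^{2}(N_{o}\backslash G,\chi)$.

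Next I would verify that this is a decomposition of $G$--representations for the right regular action. Each $\mathcal{C}(G)_{[P],\sigma}$ is $G\times G$--stable inside $L^{2}(G)$ by Theorems \ref{Abs-Planch} and \ref{smoothdecomp}, and the operator $f\mapsto f_{\chi}$ intertwines the right regular action on $\mathcal{C}(G)$ with that on $\mathcal{C}(N_{o}\backslash G,\chi)$ because right translation commutes with the left $\chi$--averaging of Proposition \ref{Fourier}; consequently $\mathcal{C}(N_{o}\backslash G,\chi)_{[P],\sigma}$, and therefore its closure $L^{2}(N_{o}\backslash G,\chi)_{[P],\sigma}$, is stable under the right regular action of $G$. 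The displayed sum is thus an orthogonal direct sum of $G$--subrepresentations. Now I would invoke Theorem \ref{BundleFormWhitt}: for each $[P]$ and each $[\sigma]\in\overline{\mathcal{E}}_{2,\chi}({}^{o}M_{P})$ (fixing one representative $\sigma$), the map $SW_{[P],\sigma,\chi}$ extends to a $G$--equivariant unitary isomorphism of $L^{2}(N_{o}\backslash G,\chi)_{[P],\sigma}$ onto $\Gamma^{2}(\mathbb{W}_{\chi}(P,\sigma))^{N_{K}(A_{P})}$. The choice of representative $\sigma$ within its class is immaterial, since for $k\in N_{K}(A_{P})$ the cocycle $CW_{k,\chi}(P,\sigma,\nu)$ identifies the space of $N_{K}(A_{P})$--invariant sections attached to $\sigma$ with the one attached to $k\sigma$. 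Forming the orthogonal Hilbert sum of these isomorphisms over all $[P]$ and $[\sigma]$ then produces the asserted $G$--equivariant unitary equivalence.

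The only points I expect to require genuine care, rather than formal bookkeeping, are two. First, the summation index must be exactly $\overline{\mathcal{E}}_{2,\chi}({}^{o}M_{P})$: discrete series $\sigma$ with no $\chi^{-1}$--Whittaker model must contribute nothing, and this is precisely what is built into \cite{WPT} Theorem 34, so no extra work is needed here. Second, the density statement at the Schwartz level must be upgraded to a bona fide $L^{2}$ orthogonal direct sum, for which it is the \emph{mutual orthogonality} of the summands---not merely their algebraic independence---that is the operative input. Granting these, the Corollary is a formal consequence of \cite{WPT} Theorem 34 and Theorem \ref{BundleFormWhitt}.
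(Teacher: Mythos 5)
Your proposal is correct and follows exactly the route the paper intends: the Corollary is stated without proof precisely because it is the formal combination of the Schwartz-space orthogonal decomposition (\cite{WPT} Theorem 34, restated in the paper) with the summand-wise unitary $G$--isomorphism of Theorem \ref{BundleFormWhitt}, which is what you assemble. Your added care about $L^{2}$--density, mutual orthogonality, right-$G$--stability of each $\mathcal{C}(N_{o}\backslash G,\chi)_{[P],\sigma}$, and independence of the representative $\sigma$ makes explicit what the paper leaves implicit, and is all sound.
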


\section{The direct integral version of the Plancherel
Theorem\label{Direct-integral}}

The purpose of this section is to show how the results on the Plancherel Space
for the Harish-Chandra and Whittaker cases imply the corresponding direct
integral decompositions. We first recall the parametrization of an open full
measure subset of the tempered dual, $\mathcal{E}_{temp}(G)$, of the real
reductive group $G$.

We now recall some notation from Theorem \ref{Abs-Planch}. Let $P$ be a
cuspidal, standard, parabolic subgroup of $G$. If $\omega\in\mathcal{E}_{2}%
({}^{o}M_{P})$ and if $\sigma_{\omega}\in\omega$ and if $s\in W(A_{P})$ then
define $s\omega$ to be the equivalence class of $k\sigma_{\omega}$ for $k\in
K$ and $Ad(k)_{|\mathfrak{a}_{P}}=s$. Let $[\omega]$ denote $W(A_{P})\omega$
on $\mathcal{E}_{2}({}^{o}M_{P})$ and denote the set of such classes as
$\overline{\mathcal{E}_{2}}({}^{o}M_{P})$. Let $\mathcal{P}_{cusp} $ denote
the set of associativity classes of cuspidal standard parabolic subgroups of
$G$. If $[P]\in\mathcal{P}_{cusp}$ then $A_{P,}M_{P}$ and {}$^{o}M_{P}$ depend
only on $[P]$. Since $\mathcal{E}_{2}({}^{o}M_{P})$ has been endowed the
discrete topology we may choose a cross-section to that action of $W(A_{P})$,
$\mathcal{E}_{2}({}^{o}M_{P})^{o}.$ As in Lemma \ref{FundDom} let
$\Omega_{\lbrack P]}^{\prime}=\Omega_{W(A_{P})}\cap\left(  \mathfrak{a}%
_{P}^{\ast}\right)  ^{\prime}\subset\mathfrak{a}_{P}^{\ast}$ and
$U_{[P]}^{\prime}=U_{W(A_{P})}\cap\left(  \mathfrak{a}_{P}^{\ast}\right)
^{\prime}$. Let
\[
\mathcal{A}(G,[P])^{\prime}=\cup_{\omega\in\mathcal{E}_{2}({}^{o}M_{P})^{o}%
}\sigma_{\omega}\times U_{[P]}^{\prime}.
\]
If $s,t\in W(A_{P})$ and $\omega\in\mathcal{E}_{2}({}^{o}M_{P})^{o}$ then
$s(\sigma_{\omega}\times U_{[P]}^{\prime})\cap t(\sigma_{\omega}\times
U_{[P]}^{\prime})\neq\emptyset$ then $s=t$. If $(\pi,H)$ is an unitary
representation of $G$ then its equivalence class will be denoted by $[\pi]$.
Set
\[
\Phi([P],\omega,\nu)=[\pi_{P,\sigma_{\omega},i\nu}]
\]
for $[P]\in\mathcal{P}_{cusp}$, $\omega\in\mathcal{E}_{2}({}^{o}M_{P}),\nu
\in\mathfrak{a}_{P}^{\ast}$. Theorem 14.12.3 in \cite{RRGII} implies

\begin{theorem}
The map $\Phi$ of $\cup_{\lbrack P]\in\mathcal{P}_{cusp}}\mathcal{A}%
(G,[P])^{\prime}$ with the disjoint union topology to $\mathcal{E}_{temp}(G) $
is a homeomorphism onto an open subset of $\mathcal{E}_{temp}(G) $ that is of
co--measure $0$.
\end{theorem}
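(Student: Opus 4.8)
The plan is to establish the three assertions of the theorem separately: that $\Phi$ is well-defined and injective on the disjoint union, that it is a homeomorphism onto its image, and that the image is open with complement of measure zero. Since the bulk of the analytic content has been packaged into the cited Theorem 14.12.3 of \cite{RRGII}, my task is mainly to verify that the parametrization I have set up here matches that result and that the combinatorial bookkeeping with the Weyl groups $W(A_P)$ is correct.

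First I would check injectivity. By the construction, $\mathcal{A}(G,[P])^{\prime}$ ranges over $\omega\in\mathcal{E}_2({}^o M_P)^o$ (a cross-section of the $W(A_P)$-action) and $\nu\in U_{[P]}^{\prime}$ (a fundamental domain for $W(A_P)$ intersected with the regular set). The stated disjointness property $s(\sigma_\omega\times U_{[P]}^{\prime})\cap t(\sigma_\omega\times U_{[P]}^{\prime})\neq\emptyset \Rightarrow s=t$ is precisely the statement that distinct points of the fundamental domain give inequivalent induced representations. Two induced representations $\pi_{P,\sigma_\omega,i\nu}$ and $\pi_{Q,\sigma_{\omega'},i\nu'}$ are equivalent only if $[P]=[Q]$ (the associativity class is an invariant, via the infinitesimal character and the support of the Plancherel measure), and then only if $(\omega',\nu')$ lies in the $W(A_P)$-orbit of $(\omega,\nu)$; this is the content of Harish-Chandra's determination of the intertwining algebra (Theorem 13.6.6 in \cite{RRGII}, already invoked earlier in the excerpt). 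On the regular set $\left(\mathfrak{a}_P^{\ast}\right)^{\prime}$ the representations $\pi_{P,\sigma,i\nu}$ are irreducible, so $\Phi$ genuinely lands in $\mathcal{E}_{temp}(G)$ and the fiber of $\Phi$ over each point is a single $W(A_P)$-orbit, which the cross-section/fundamental-domain choice collapses to one point.

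Next I would address the topological statement. Continuity of $\Phi$ follows because $(\omega,\nu)\mapsto\pi_{P,\sigma_\omega,i\nu}$ varies continuously in the Fell topology on $\mathcal{E}_{temp}(G)$ (matrix coefficients depend continuously on $\nu$, and $\omega$ ranges over a discrete set). For the inverse, I would use that the tempered dual near an irreducible $\pi_{P,\sigma,i\nu}$ is locally parametrized by the continuous parameter $\nu$ together with the discrete data $([P],\omega)$; openness of $\Phi$ then reduces to the local homeomorphism property of $\nu\mapsto[\pi_{P,\sigma,i\nu}]$ on the regular set, which is exactly what Theorem 14.12.3 of \cite{RRGII} provides. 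The co-measure-zero assertion is the claim that the complement of the image---representations attached to non-regular $\nu$ (i.e. the walls $\left(\mathfrak{a}_P^{\ast}\right)\setminus\left(\mathfrak{a}_P^{\ast}\right)^{\prime}$) together with representations induced from non-associate or non-maximal data---has Plancherel measure zero; this follows because the Plancherel measure restricted to each piece is $\mu(\sigma,i\nu)\,d\nu$ (absolutely continuous with respect to Lebesgue measure on $\mathfrak{a}_P^{\ast}$), and the excluded walls form a finite union of lower-dimensional subspaces, hence a Lebesgue-null set.

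The main obstacle I anticipate is not any single analytic estimate but rather the careful matching of the present combinatorial setup---the choice of cross-section $\mathcal{E}_2({}^o M_P)^o$ and fundamental domain $U_{[P]}^{\prime}=U_{W(A_P)}\cap\left(\mathfrak{a}_P^{\ast}\right)^{\prime}$ from Lemma \ref{FundDom}---to the orbit structure underlying Theorem 14.12.3. Concretely, I must confirm that the $W(A_P)$-action on pairs $(\omega,\nu)$ is the correct one (recall $s\omega$ was defined via $k$ with $Ad(k)_{|\mathfrak{a}_P}=s$, and the action on $\nu$ is the dual action), that no extra identifications arise from elements of $N_K(A_P)$ fixing $\sigma$, and that the union over associativity classes $[P]\in\mathcal{P}_{cusp}$ does not double-count any tempered representation. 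Verifying this bijective correspondence between my fundamental-domain coordinates and Harish-Chandra's orbit parametrization, so that injectivity and openness are simultaneously clean, is where the real care is required; once that dictionary is in place the homeomorphism and measure statements follow directly from the cited theorem.
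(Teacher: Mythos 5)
Your proposal is correct and takes essentially the same route as the paper: the paper gives no independent argument at all, simply asserting that the theorem is implied by Theorem 14.12.3 of \cite{RRGII}, which is exactly the reduction you make. Your additional verifications---injectivity via Harish-Chandra's intertwining theory on the regular set, openness via the cited theorem, and co-measure zero from absolute continuity of $\mu(\sigma,i\nu)\,d\nu$ together with the walls being Lebesgue-null---are the details the paper leaves implicit, and they are sound.
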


We now move to the situation and notation of Section \ref{G-Plancherel Space}.
We note that if $\alpha,\beta\in\Gamma^{2}(\mathbf{HS}(H_{[P],\sigma}%
))^{N_{K}(A_{P})}$ and if $\alpha_{|\mathcal{A}(G,[P])^{\prime}}%
=\beta_{|\mathcal{A}(G,[P])^{\prime}}$ then $\alpha=\beta$. Also
$\mathbf{HS}(H_{[P],\sigma})_{|\mathcal{A}(G,[P])^{\prime}}$ is the trivial
vector bundle $\mathcal{A}(G,[P])^{\prime}\times HS(H_{[P],\sigma})$ and
$G\times G$--action given by $(g,h).(([P],\sigma,\nu),T)=(([P],\sigma,\nu
),\Pi_{P,,\sigma,i\nu}(g,h)T)$ now Theorem \ref{Abs-Planch} yields the
standard direct integral version of the Harish-Chandra Plancherel Theorem (see
Theorem 14,12,4 in \cite{RRGII}).

We will now do the same for the Whittaker Plancherel Theorem. For this we make
the (slight) change of notation
\[
\Phi([P],\omega,\nu)=[\pi_{P,\sigma_{\omega},i\nu}].
\]
This doesn't effect the above theorem. As above we have

\begin{lemma}
Let $\alpha,\beta\in\Gamma(\mathbb{W}_{\chi}(P,\sigma))^{N_{K}(A_{P})}$ and if
$\alpha_{|\sigma\times U_{[P]}^{\prime}}=\beta_{|\sigma\times U_{[P]}^{\prime
}}$ then $\alpha=\beta$.
\end{lemma}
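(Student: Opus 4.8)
The plan is to reduce the statement to a single vanishing assertion and then propagate it across the base by $N_{K}(A_{P})$-equivariance, closing with a continuity argument, exactly in parallel with the remark made just above for the Harish-Chandra bundle $\mathbf{HS}(H_{[P],\sigma})$. Since the $N_{K}(A_{P})$-invariant cross-sections of $\mathbb{W}_{\chi}(P,\sigma)$ form a vector space, it suffices to show that a continuous $\delta\in\Gamma(\mathbb{W}_{\chi}(P,\sigma))^{N_{K}(A_{P})}$ with $\delta_{|\sigma\times U_{[P]}^{\prime}}=0$ is identically zero, and then apply this to $\delta=\alpha-\beta$. The point that makes the propagation work is that the operators $CW_{k,\chi}(P,\mu,\nu)$ are invertible (indeed unitary, by the theorem preceding the construction of $\mathbb{W}_{\chi}(P,\sigma)$), so that the defining relation $\delta(k\mu,k\nu)=CW_{k,\chi}(P,\mu,\nu)\delta(\mu,\nu)$ shows $\delta(\mu,\nu)=0$ if and only if $\delta(k\mu,k\nu)=0$ for every $k\in N_{K}(A_{P})$.

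First I would use this equivariance to spread the hypothesis: if $\delta$ vanishes on $\{\sigma\}\times U_{[P]}^{\prime}$, then it vanishes on the whole $N_{K}(A_{P})$-orbit $Z=N_{K}(A_{P})\cdot(\{\sigma\}\times U_{[P]}^{\prime})$ inside $N_{K}(A_{P})\sigma\times\mathfrak{a}_{P}^{\ast}$.

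Next I would show that $Z$ contains the regular part $N_{K}(A_{P})\sigma\times(\mathfrak{a}_{P}^{\ast})^{\prime}$; equivalently, that every base point $(\mu,\nu)$ with $\nu\in(\mathfrak{a}_{P}^{\ast})^{\prime}$ can be carried into $\{\sigma\}\times U_{[P]}^{\prime}$ by some element of $N_{K}(A_{P})$. This is precisely the assertion that $\{\sigma\}\times U_{[P]}^{\prime}$ is the regular part of a fundamental domain for the diagonal $N_{K}(A_{P})$-action on the base, and it is where all the real content of the lemma sits. I would extract it from the material immediately preceding the statement: the equivalences $\pi_{P,\mu,i\nu}\cong\pi_{P,k\mu,ik\nu}$ realized by the operators $A_{k}$ identify the $N_{K}(A_{P})$-orbits on the base with the fibers of $\Phi$; the disjointness statement (that $s(\sigma_{\omega}\times U_{[P]}^{\prime})\cap t(\sigma_{\omega}\times U_{[P]}^{\prime})\neq\emptyset$ forces $s=t$) gives that distinct translates do not overlap; and the theorem that $\Phi$ is a homeomorphism onto an open, co-measure-zero subset of $\mathcal{E}_{temp}(G)$ gives that these translates exhaust the regular part up to a null set. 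The care required here is that $N_{K}(A_{P})\sigma$ is parametrized by $N_{K}(A_{P})/N_{K}(A_{P})_{\sigma}$ while the $\nu$-coordinate is acted on through $W(A_{P})$, so the coverage comes out correctly only once one keeps track of the stabilizer $N_{K}(A_{P})_{\sigma}$ and its image (the little group of $\sigma$); Lemma \ref{FundDom} is the tool that packages this bookkeeping into the single set $U_{[P]}^{\prime}$. I expect this coverage step to be the only genuine obstacle.

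Finally, since each $\nu\mapsto\delta(\mu,\nu)$ is continuous and $(\mathfrak{a}_{P}^{\ast})^{\prime}$ is open and dense in $\mathfrak{a}_{P}^{\ast}$, vanishing of $\delta$ on $N_{K}(A_{P})\sigma\times(\mathfrak{a}_{P}^{\ast})^{\prime}$ forces $\delta\equiv0$ on the full base $N_{K}(A_{P})\sigma\times\mathfrak{a}_{P}^{\ast}$, whence $\alpha=\beta$. Note that one cannot shortcut the coverage step by analytic continuation from the open set $U_{[P]}^{\prime}$: a general invariant cross-section is only assumed continuous, not real-analytic in $\nu$, so the orbit-covering argument is genuinely needed, and once it is in hand the equivariance and continuity steps are routine.
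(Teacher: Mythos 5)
Your outline is the natural one, and as far as one can tell it is also the argument the paper intends: the paper offers no proof of this lemma (it is asserted ``as above,'' by analogy with the equally unproved note for $\mathbf{HS}(H_{[P],\sigma})$ in the Harish--Chandra case), so the comparison here is with that implicit argument, which your steps 1, 2 and 4 (linearity, propagation by the invertible/unitary $CW_{k,\chi}$, and the continuity/density step) formalize correctly. The problem is your step 3, the coverage claim, which you rightly single out as carrying all the content but then dispose of incorrectly. Compute the intersection of the orbit $Z=N_{K}(A_{P})\cdot\bigl(\{\sigma\}\times U_{[P]}^{\prime}\bigr)$ with a fiber: $(k_{0}\sigma,\nu)\in Z$ forces $\nu\in x\,U_{[P]}^{\prime}$ for some $x\in k_{0}N_{K}(A_{P})_{\sigma}$, i.e. $\nu\in \bar{k}_{0}W_{\sigma}U_{[P]}^{\prime}$, where $W_{\sigma}$ denotes the image of the stabilizer $N_{K}(A_{P})_{\sigma}$ in $W(A_{P})$ and $\bar{k}_{0}$ the image of $k_{0}$. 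Lemma \ref{FundDom} makes $U_{[P]}^{\prime}$ a fundamental domain for the \emph{full} group $W(A_{P})$: its $W(A_{P})$-translates are pairwise disjoint open sets with union $\Omega_{[P]}^{\prime}$. Hence $\bar{k}_{0}W_{\sigma}U_{[P]}^{\prime}$ is a union of only $|W_{\sigma}|$ of these translates, and its closure misses every open set $\bar{k}_{0}wU_{[P]}^{\prime}$ with $w\notin W_{\sigma}$. So $Z$ is dense in $N_{K}(A_{P})\sigma\times\mathfrak{a}_{P}^{\ast}$ if and only if $W_{\sigma}=W(A_{P})$; Lemma \ref{FundDom} does not ``package the bookkeeping'' --- it is exactly what makes the coverage fail when the little group is proper, because within a fixed fiber you may only move $\nu$ by $W_{\sigma}$, while $U_{[P]}^{\prime}$ is small enough to be a fundamental domain for all of $W(A_{P})$. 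Neither equivariance nor continuity can reach the missing open sets.

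Moreover this is not a repairable defect of the argument alone. Take $G=SL(3,\mathbb{R})$, $P=P_{o}$, and $\sigma$ a nontrivial character of ${}^{o}M_{P_{o}}\cong(\mathbb{Z}/2)^{2}$ (the Whittaker hypothesis is vacuous since $N_{o}\cap M_{P_{o}}=\{e\}$): then $W(A_{P})\cong S_{3}$ while $W_{\sigma}$ has order $2$. Pick $w_{1}\notin W_{\sigma}$, prescribe a continuous, compactly supported, nonzero fiber-valued function on $\{\sigma\}\times w_{1}U_{[P]}^{\prime}$, extend it to $\{\sigma\}\times W_{\sigma}w_{1}U_{[P]}^{\prime}$ by $N_{K}(A_{P})_{\sigma}$-equivariance (this is consistent: the cocycle relation for $CW$ holds, and $CW_{c}$ is the identity for $c\in C_{K}(A_{P})\cap N_{K}(A_{P})_{\sigma}$ because there $A_{c}=L_{c}$ is the scalar $\sigma(c)^{-1}$), set it to zero on the rest of the fiber over $\sigma$, and extend to the whole base by $N_{K}(A_{P})$-invariance. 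The result is a nonzero element of $\Gamma(\mathbb{W}_{\chi}(P,\sigma))^{N_{K}(A_{P})}$ vanishing identically on $\{\sigma\}\times U_{[P]}^{\prime}$. So the statement as literally written cannot be proved; it becomes true either under the additional hypothesis that $N_{K}(A_{P})_{\sigma}$ surjects onto $W(A_{P})$, or after enlarging the restriction set so that it meets every $N_{K}(A_{P})$-orbit --- e.g. restricting over $N_{K}(A_{P})\sigma\times U_{[P]}^{\prime}$, or over $\{\sigma\}\times D$ with $D$ a fundamental domain for $W_{\sigma}$ rather than for $W(A_{P})$ --- and with such a modification your remaining three steps do complete the proof. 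The same stabilizer issue has to be confronted in the identification, immediately following the lemma, of $\Gamma^{2}(\mathbb{W}_{\chi}(P,\sigma))^{N_{K}(A_{P})}$ with a direct integral over $U_{[P]}^{\prime}$.
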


Note that if $\alpha,\beta$ square integrable elements of $\Gamma
(\mathbb{W}_{\chi}(P,\sigma))^{N_{K}(A_{P})}$ then%
\[
\int_{U_{[P]}^{\prime}}\left\langle \alpha(\sigma,\nu),\beta(\alpha
,\nu)\right\rangle \mu(\sigma,i\nu)d\nu=\int_{N_{K}(A_{P})\times U_{P}%
^{\prime}}\left\langle \alpha(k\sigma,k\nu),\beta(k\alpha,k\nu)\right\rangle
\mu(\sigma,i\nu)d\nu.
\]

This implies that the unitary representation $\Gamma^{2}(\mathbb{W}_{\chi
}(P,\sigma))^{N_{K}(A_{P})}$ is equivalent with%
\[
\overline{Wh_{\chi_{|N_{o}\cap M_{P}}^{-1}}(H_{\sigma}^{\infty})}\otimes
\int_{U_{P}^{\prime}}(\bar{\pi}_{\bar{P},\sigma,i\nu,}\bar{H}_{[P],\sigma}%
)\mu(\sigma,i\nu)d\nu.
\]
We note that $\bar{\pi}_{\bar{P},\sigma,i\nu,}$ is equivalent with $\pi
_{\bar{P},\bar{\sigma},-\iota\nu}$ and the map $Wh_{\chi_{|N_{o}\cap M_{P}}%
}(H_{\bar{\sigma}\ }^{\infty})$ to $\overline{Wh_{\chi_{|N_{o}\cap M_{P}}%
^{-1}}(H_{\sigma}^{\infty})}$ given by $\lambda\mapsto\bar{\lambda} $ defines
a unitary isomorphism. Thus $\Gamma^{2}(\mathbb{W}_{\chi}(P,\sigma
))^{N_{K}(A_{P})}$ is equivalent with%
\[
Wh_{\chi_{|N_{o}\cap M_{P}}}(H_{\bar{\sigma}\ }^{\infty})\otimes\int
_{U_{P}^{\prime}}(\pi_{\bar{P},\bar{\sigma},-i\nu,}\bar{H}_{[P],\sigma}%
)\mu(\sigma,i\nu)d\nu.
\]
This and Lemma \ref{Invariance} yields the form of the direct integral
decomposition for $L^{2}(N_{o}\backslash G,\chi)$ in Theorem 15.9.1 of
\cite{RRGII}

\begin{theorem}
As a representation of $G$,$L^{2}(N_{o}\backslash G,\chi)$ is equivalent with%
\[
\bigoplus_{\lbrack P]\in\mathcal{P}_{cusp},[\omega]\in\mathcal{E}_{2}%
,\chi_{|N_{o}\cap M_{P}}({}^{o}M_{P})}\int_{\mathfrak{a}_{P}^{\ast}}%
Wh_{\chi_{|N_{o}\cap M_{P}}}(H_{\sigma\ }^{\infty})\otimes(\pi_{\bar{P}%
,\sigma,i\nu,}H_{[P],\sigma})\mu(\sigma,i\nu)d\nu.
\]

\end{theorem}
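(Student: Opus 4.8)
The plan is to obtain the asserted direct integral decomposition as a reformulation of the Hilbert bundle form of the Whittaker Plancherel Theorem. The starting point is the Corollary to Theorem~\ref{BundleFormWhitt}, which already gives, as a representation of $G$,
\[
L^{2}(N_{o}\backslash G,\chi)\cong\bigoplus_{[P]\in\mathcal{P}(G),\,[\sigma]\in\overline{\mathcal{E}}_{2,\chi}({}^{o}M_{P})}\Gamma^{2}(\mathbb{W}_{\chi}(P,\sigma))^{N_{K}(A_{P})}.
\]
First I would observe that $\overline{\mathcal{E}}_{2,\chi}({}^{o}M_{P})$ is empty unless ${}^{o}M_{P}$ carries a square integrable representation with a nonzero $\chi$-Whittaker model, which forces $P$ to be cuspidal; hence the outer sum may be taken over $\mathcal{P}_{cusp}$. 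It then remains to identify each summand $\Gamma^{2}(\mathbb{W}_{\chi}(P,\sigma))^{N_{K}(A_{P})}$ with the integral $\int_{\mathfrak{a}_{P}^{\ast}}Wh_{\chi_{|N_{o}\cap M_{P}}}(H_{\sigma}^{\infty})\otimes(\pi_{\bar{P},\sigma,i\nu},H_{[P],\sigma})\,\mu(\sigma,i\nu)\,d\nu$ appearing on the right.

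For this identification I would use the chain of unitary equivalences established in the paragraphs immediately preceding the statement. Restricting an $N_{K}(A_{P})$-invariant section to the fundamental domain (Lemma~\ref{FundDom}) gives
\[
\Gamma^{2}(\mathbb{W}_{\chi}(P,\sigma))^{N_{K}(A_{P})}\cong Wh_{\chi_{|N_{o}\cap M_{P}}}(H_{\bar{\sigma}}^{\infty})\otimes\int_{U_{[P]}^{\prime}}(\pi_{\bar{P},\bar{\sigma},-i\nu},\bar{H}_{[P],\sigma})\,\mu(\sigma,i\nu)\,d\nu,
\]
where I have combined the equivalence $\bar{\pi}_{\bar{P},\sigma,i\nu}\cong\pi_{\bar{P},\bar{\sigma},-i\nu}$ with the conjugate-linear unitary $\lambda\mapsto\bar{\lambda}$ identifying $\overline{Wh_{\chi^{-1}}(H_{\sigma}^{\infty})}$ with $Wh_{\chi}(H_{\bar{\sigma}}^{\infty})$. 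Applying the change of variables $\nu\mapsto-\nu$ and relabeling $\bar{\sigma}$ as $\sigma$ then recasts the integrand in the standard (non-conjugated) form $Wh_{\chi}(H_{\sigma}^{\infty})\otimes(\pi_{\bar{P},\sigma,i\nu},H_{[P],\sigma})$.

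The essential step is the unfolding, supplied by Lemma~\ref{Invariance}: the assignment $(\sigma,\nu)\mapsto Wh_{\chi}(H_{\sigma}^{\infty})\otimes(\pi_{\bar{P},\sigma,i\nu},H_{[P],\sigma})$ equipped with the measure $\mu(\sigma,i\nu)\,d\nu$ is $W(A_{P})$-equivariant under the simultaneous action $(\sigma,\nu)\mapsto(k\sigma,k\nu)$, $k\in N_{K}(A_{P})$, and the Plancherel density satisfies $\mu(k\sigma,ik\nu)=\mu(\sigma,i\nu)$. Since $U_{[P]}^{\prime}$ tiles $(\mathfrak{a}_{P}^{\ast})^{\prime}$ under $W(A_{P})$ up to a null set, this equivariance lets one trade the $N_{K}(A_{P})$-invariant section space (an integral over the fundamental domain) for a genuine direct integral over all of $\mathfrak{a}_{P}^{\ast}$, the invariance being absorbed into the unfolding; correspondingly the sum over $N_{K}(A_{P})$-classes $[\sigma]$ is rewritten as the sum over $W(A_{P})$-classes $[\omega]$. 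Finally I would invoke the parametrization theorem of this section, which realizes $\Phi$ as a measure-preserving homeomorphism of $\cup_{[P]}\mathcal{A}(G,[P])^{\prime}$ onto a full-measure open subset of $\mathcal{E}_{temp}(G)$, to certify the measure-theoretic soundness of the resulting direct integral.

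The main obstacle I expect is precisely the bookkeeping of this unfolding. One must verify that the three identifications — the equivalence $\bar{\pi}_{\bar{P},\sigma,i\nu}\cong\pi_{\bar{P},\bar{\sigma},-i\nu}$, the conjugation of Whittaker functionals, and the $W(A_{P})$-equivariance with its invariant measure from Lemma~\ref{Invariance} — are mutually compatible, and that the change of variables $\nu\mapsto-\nu$ leaves the Plancherel density undisturbed. The most delicate point is the treatment of nontrivial stabilizers $W(A_{P})_{\sigma}$, where the restriction to the fundamental domain and the passage to the full integral must be reconciled so as to avoid over- or under-counting; one has to check that the $G$-action, which operates only through the factor $H_{[P],\sigma}$, is transported correctly through all of these steps, so that the final map is a bona fide unitary $G$-isomorphism and not merely an isometry of the underlying Hilbert spaces.
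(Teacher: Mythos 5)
Your route is the paper's own: begin with the Corollary to Theorem \ref{BundleFormWhitt}, restrict $N_{K}(A_{P})$--invariant sections to the fundamental domain $\sigma\times U_{[P]}^{\prime}$, apply the equivalences $\bar{\pi}_{\bar{P},\sigma,i\nu}\cong\pi_{\bar{P},\bar{\sigma},-i\nu}$ and $\lambda\mapsto\bar{\lambda}$, and then relabel and unfold to an integral over all of $\mathfrak{a}_{P}^{\ast}$; the reduction to $\mathcal{P}_{cusp}$ via existence of discrete series is likewise how the paper passes from $\mathcal{P}(G)$ to $\mathcal{P}_{cusp}$.

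There is, however, one concrete error, and it leaves the key step of your argument unjustified: you have assigned Lemma \ref{Invariance} the wrong job. That lemma asserts nothing about $W(A_{P})$--equivariance or about the identity $\mu(k\sigma,ik\nu)=\mu(\sigma,i\nu)$; the latter is a standard property of Harish-Chandra's $\mu$--function, implicitly used already in the displayed inner-product formula over $N_{K}(A_{P})\times U_{P}^{\prime}$ and in Theorem \ref{first version}, and it is what legitimizes the unfolding. What Lemma \ref{Invariance} actually states is that the Plancherel measure of $G$ is invariant under passage to dual representation classes, $\mu(S)=\mu(S^{\ast})$. In the paper it is invoked for exactly the step that you perform as a bare ``change of variables $\nu\mapsto-\nu$ and relabeling of $\bar{\sigma}$ as $\sigma$'': since $[\pi_{\bar{P},\bar{\sigma},-i\nu}]$ is the dual class of $[\pi_{\bar{P},\sigma,i\nu}]$, duality-invariance of the Plancherel measure is precisely what guarantees that the density is undisturbed by the relabeling (equivalently $\mu(\bar{\sigma},-i\nu)=\mu(\sigma,i\nu)$) and that $[\omega]\mapsto[\bar{\omega}]$ permutes the classes in $\overline{\mathcal{E}}_{2,\chi_{|N_{o}\cap M_{P}}}({}^{o}M_{P})$ compatibly with the measures. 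You flag exactly this point at the end (``the change of variables $\nu\mapsto-\nu$ leaves the Plancherel density undisturbed'') as an obstacle to be checked, but you never check it --- the lemma that checks it has been spent on a fact it does not contain. With Lemma \ref{Invariance} restored to this role, and the $W(A_{P})$--invariance of the $\mu$--function cited for the unfolding from $U_{[P]}^{\prime}$ to $\mathfrak{a}_{P}^{\ast}$ (your stabilizer worry here is legitimate, but the paper glosses it in the same way), your argument coincides with the paper's.
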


\appendix

\section{Some simple results}

The following lemmas are no doubt well known .

\begin{lemma}
\label{FundDom}Let $S$ be a finite subgroup of $O(n)$. Then there exists an
$S$--invariant subset of $\mathbb{R}^{n}$, $\Omega_{S}$, that is the
complement of a finite union of hyperplanes and an open subset $U_{S}%
\subset\Omega_{S}$ such that if $s\in S-\{I\}sU_{S}\cap U_{S}=\emptyset$ and
$U_{s\in S}sU_{S}=\Omega_{S}$.
\end{lemma}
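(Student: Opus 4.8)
The plan is to build a Dirichlet (Voronoi) fundamental domain based at a generic point of $\mathbb{R}^{n}$. First I would choose a base point $v_{0}$ whose $S$--stabilizer is trivial. Such a point exists because for each $s\in S-\{I\}$ the fixed set $\mathrm{Fix}(s)=\ker(s-I)$ is a proper linear subspace (proper since $s\neq I$ forces $s-I\neq 0$), there are only finitely many such $s$, and a real vector space is not a finite union of proper subspaces; hence $v_{0}$ can be taken outside $\bigcup_{s\neq I}\mathrm{Fix}(s)$. Triviality of the stabilizer guarantees that the orbit points $sv_{0}$, $s\in S$, are pairwise distinct, since $sv_{0}=tv_{0}$ is equivalent to $s^{-1}t\in\mathrm{Stab}(v_{0})=\{I\}$.

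Next I would take the hyperplanes to be the perpendicular bisectors $H_{s,t}=\{x\mid \|x-sv_{0}\|=\|x-tv_{0}\|\}$ for $s\neq t$ in $S$. Expanding $\|x-p\|^{2}=\|x-q\|^{2}$ gives the affine equation $2\langle x,q-p\rangle=\|q\|^{2}-\|p\|^{2}$, so each $H_{s,t}$ is a genuine hyperplane because $sv_{0}\neq tv_{0}$, and there are finitely many since $S$ is finite. Set $\Omega_{S}=\mathbb{R}^{n}\setminus\bigcup_{s\neq t}H_{s,t}$, the locus where the distances $\|x-sv_{0}\|$, $s\in S$, are pairwise distinct. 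The substitution $\|ux-sv_{0}\|=\|x-u^{-1}sv_{0}\|$ shows that applying $u\in S$ merely permutes this family of distances, so $\Omega_{S}$ is $S$--invariant, as required.

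Then I would define the chamber $U_{S}=\{x\in\Omega_{S}\mid \|x-v_{0}\|<\|x-sv_{0}\|\text{ for all }s\neq I\}$, that is, the set of $x\in\Omega_{S}$ for which $v_{0}$ is the (necessarily unique) nearest orbit point. This is open, being the intersection of the open set $\Omega_{S}$ with the open region cut out by finitely many strict inequalities, and $U_{S}\subset\Omega_{S}$ by construction. The same distance substitution identifies $uU_{S}$ with the set of $x\in\Omega_{S}$ whose nearest orbit point is $uv_{0}$. Since on $\Omega_{S}$ the nearest orbit point is unique, the sets $uU_{S}$, $u\in S$, are pairwise disjoint; taking $u=I$ and $u=s$ gives $sU_{S}\cap U_{S}=\emptyset$ for $s\neq I$, and every point of $\Omega_{S}$ lies in exactly one $uU_{S}$, yielding $\bigcup_{s\in S}sU_{S}=\Omega_{S}$.

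The argument is elementary and I expect no serious obstacle; the one point to handle with care is the interplay between the two defining conditions. The hyperplane arrangement must be taken over all pairs $s\neq t$, and not just pairs involving $I$, so that $\Omega_{S}$ is genuinely $S$--invariant and so that the nearest orbit point is unique throughout $\Omega_{S}$. It is precisely this uniqueness that simultaneously delivers the disjointness and the exact covering, so the verification amounts to checking that $\Omega_{S}$ as defined is both invariant and free of ties.
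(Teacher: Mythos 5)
Your proof is correct and is essentially the same as the paper's: both pick a base point $v_{0}$ (the paper's $\lambda_{o}$) with trivial stabilizer, remove the hyperplanes $\langle x, sv_{0}-tv_{0}\rangle=0$ for $s\neq t$ (your perpendicular bisectors, which pass through the origin since all orbit points have equal norm), and take $U_{S}$ to be the Dirichlet chamber of points strictly nearest to $v_{0}$. The only difference is expository: you spell out the disjointness and covering via uniqueness of the nearest orbit point, which the paper leaves as "the desired properties."
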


\begin{proof}
If $s\in S$ set
\[
\left(  \mathbb{R}^{n}\right)  ^{s}=\{v\in\mathbb{R}^{n}|sv=v\}.
\]
If $s\neq I$ then $\left(  \mathbb{R}^{n}\right)  ^{s}$ is a proper subspace.
If $\lambda_{o}\notin\cup_{s\in S-\{I\}}\left(  \mathbb{R}^{n}\right)  ^{s}$
then, obviously, $s\lambda_{o}\neq\lambda_{o}$ for all $s\in S-\{I\}$. Fix
such a $\lambda_{o}$. We now do a standard construction. Let
\[
\Omega_{S}=\{\lambda|\left\langle \lambda,s\lambda_{o}-t\lambda_{o}%
\right\rangle \neq0,\mathrm{\ for\ }s\neq t,s,t\in S\}.
\]
Then $s\Omega_{S}=\Omega_{S}$ for $s\in S$. If $\lambda\in\Omega_{S}$,$s,t\in
S,$ and if $\left\vert s\lambda-\lambda_{o}\right\vert =\left\vert
t\lambda-\lambda_{o}\right\vert $ then
\[
\left\langle \lambda,s^{-1}\lambda_{o}-t^{-1}\lambda_{o}\right\rangle =0
\]
which implies $s=t$. Set
\[
U_{S}=\{\lambda\in\Omega_{S}|\left\vert \lambda-\lambda_{o}\right\vert
<\left\vert s\lambda-\lambda_{o}\right\vert ,s\in S-I\}.
\]
Then $U_{S}$ and $\Omega_{S}$ have the desired properties.
\end{proof}

The next result is true for any unimodular, locally compact group we will
prove it only for the real reductive group, $G$. If we fix the Haar measure on
$G$ \ Then associated with it is the Plancherel Measure on, $\mathcal{E}(G)$,
the set of equivalence classes of irreducible unitary representations of $G$.
If $\omega\in\mathcal{E}(G)$ then let $\omega^{\ast}$ be the equivalence class
of the dual unitary representation of any element of the class of $\omega$. If
$S\subset\mathcal{E}(G)$ is a $\mu$--measurable subset then set $S^{\ast
}=\{\omega^{\ast}|\omega\in S\}$. If $\omega\in\mathcal{E}(G)$ fix a choice of
$(\pi_{\omega},H_{\omega})\in\mathcal{E}(G)$.

\begin{lemma}
\label{Invariance}If $S\subset\mathcal{E}(G)$ is a $\mu$--measurable subset
then $\mu(S)=\mu(S^{\ast})$.
\end{lemma}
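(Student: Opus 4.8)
The plan is to show that the Plancherel measure $\mu$ is invariant under the duality involution $\omega\mapsto\omega^{*}$ by comparing the $L^{2}$--Plancherel formula for a test function with the one for its reflection, and then appealing to the uniqueness of $\mu$. Throughout I work with $h\in C_{c}^{\infty}(G)$ (or $h\in\mathcal{C}(G)$), for which the abstract Plancherel theorem for the type I group $G$ gives
\[
\|h\|_{L^{2}(G)}^{2}=\int_{\mathcal{E}(G)}\|\pi_{\omega}(h)\|_{HS}^{2}\,d\mu(\omega),
\]
the operators $\pi_{\omega}(h)$ being Hilbert--Schmidt for $\mu$-almost all $\omega$. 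Write $\tilde h(g)=h(g^{-1})$; since $G$ is unimodular, $\tilde h\in C_{c}^{\infty}(G)$ and $\|\tilde h\|_{L^{2}(G)}=\|h\|_{L^{2}(G)}$.

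The first step is the operator identity relating $\omega$ and $\omega^{*}$. Using the description $\pi_{\omega^{*}}(g)=\bigl(\pi_{\omega}(g)^{-1}\bigr)^{t}=\bigl(\pi_{\omega}(g^{-1})\bigr)^{t}$ of the contragredient (the dual representation $(\pi',H')$ of the Notation section) and pulling the transpose out of the integral,
\[
\pi_{\omega^{*}}(h)=\int_{G}h(g)\bigl(\pi_{\omega}(g^{-1})\bigr)^{t}\,dg=\Bigl(\int_{G}h(g)\pi_{\omega}(g^{-1})\,dg\Bigr)^{t}=\bigl(\pi_{\omega}(\tilde h)\bigr)^{t},
\]
the last equality by the substitution $g\mapsto g^{-1}$. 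Since the transpose is an isometry for the Hilbert--Schmidt norm, $\|\pi_{\omega^{*}}(h)\|_{HS}=\|\pi_{\omega}(\tilde h)\|_{HS}$ for $\mu$-almost every $\omega$.

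Now I would transport the Plancherel formula through the involution. The map $T\colon\omega\mapsto\omega^{*}$ is a Borel automorphism of $\mathcal{E}(G)$, so its pushforward $\mu^{*}:=T_{*}\mu$, which is exactly the positive measure with $\mu^{*}(A)=\mu(A^{*})$, is well defined. Changing variables $\eta=\omega^{*}$ and combining the two displays,
\[
\int_{\mathcal{E}(G)}\|\pi_{\eta}(h)\|_{HS}^{2}\,d\mu^{*}(\eta)=\int_{\mathcal{E}(G)}\|\pi_{\omega^{*}}(h)\|_{HS}^{2}\,d\mu(\omega)=\int_{\mathcal{E}(G)}\|\pi_{\omega}(\tilde h)\|_{HS}^{2}\,d\mu(\omega)=\|\tilde h\|_{L^{2}(G)}^{2}=\|h\|_{L^{2}(G)}^{2}.
\]
Hence $\mu^{*}$ satisfies the very same Plancherel identity as $\mu$, for every $h$.

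The final, and I expect only substantive, step is uniqueness: the fixed Haar measure on $G$ determines a unique positive measure on $\mathcal{E}(G)$ for which $\|h\|_{L^{2}(G)}^{2}=\int_{\mathcal{E}(G)}\|\pi_{\omega}(h)\|_{HS}^{2}\,d(\cdot)(\omega)$ holds for all $h\in C_{c}^{\infty}(G)$. This is the uniqueness clause of the Plancherel theorem for type I (in particular, real reductive) groups; it yields $\mu=\mu^{*}$, i.e.\ $\mu(S)=\mu^{*}(S)=\mu(S^{*})$ for every $\mu$-measurable $S$, which is the assertion. The main obstacles to making this airtight are to justify that $T$ is Borel (so that $S^{*}$ is measurable and $\mu^{*}$ makes sense) and to invoke uniqueness in the precise form above; if one prefers a self-contained treatment, uniqueness can be extracted from the disintegration of the two-sided regular representation of $G\times G$, where the measure class and multiplicities pin down $\mu$ once the normalization is fixed.
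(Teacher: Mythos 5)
Your proof is correct and follows essentially the same route as the paper: both arguments rest on the uniqueness clause of the Plancherel theorem, the identity relating $\pi_{\omega^{\ast}}(f)$ to $\pi_{\omega}(\tilde{f})$ where $\tilde{f}(g)=f(g^{-1})$, and the observation that the left-hand side of the Plancherel identity is unchanged under $f\mapsto\tilde{f}$, so that the pushforward measure $\mu^{\ast}$ must coincide with $\mu$. The only (immaterial) difference is that you use the Parseval form $\Vert h\Vert_{L^{2}}^{2}=\int\Vert\pi_{\omega}(h)\Vert_{HS}^{2}\,d\mu(\omega)$, whereas the paper uses the trace-inversion form $f(e)=\int\mathrm{tr}(\pi_{\omega}(f))\,d\mu(\omega)$.
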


\begin{proof}
The Plancherel Theorem implies that $\mu$ is the unique Radon measure on
$\mathcal{E}(G)$ such that if $f\in C_{c}^{\infty}(G)$ then
\[
\int_{\mathcal{E}(G)}\mathrm{tr}(\pi_{\omega}(f))d\mu(\omega)=f(e)
\]
with $e$ the identity element of $G$. Define $\tilde{f}(g)=f(g^{-1})$. Then
\[
\mathrm{tr}(\pi_{\omega^{\ast}}(f))=\mathrm{tr}(\pi_{\omega}(\tilde{f})).
\]
Clearly, $\tilde{f}(e)=f(e)$. Hence we have
\[
f(e)=\int_{\mathcal{E}(G)}\mathrm{tr}(\pi_{\omega}(\tilde{f}))d\mu
(\omega)=\int_{\mathcal{E}(G)}\mathrm{tr}(\pi_{\omega^{\ast}}(f))d\mu
(\omega)=\int_{\mathcal{E}(G)}\mathrm{tr}(\pi_{\omega}(f))d\mu(\omega^{\ast}).
\]

\end{proof}


\begin{thebibliography}{99999}                                                                                            %
\bibitem[HC1]{HCI}Harish-Chandra, Harmonic Analysis on Reductive Groups I, The
Theory of the Constant Term I, J. Func. Anal.,19 (1976),104-204.

\bibitem[HC2]{HCII}Harish-Chandra, Harmonic Analysis on Reductive Groups II
Inventiones Math.,36 (1976)

\bibitem[HC3]{HCIII}Harish-Chandra, Harmonic Analysis on Real Reductive Groups
III, Ann. of Math., 104 (1976),117-201.

\bibitem[HC5]{HCV}Harish-Chandra, Collected Papers V (Posthumous), Springer,
Switzerland, 2018.

\bibitem[RRGI]{RRGI}Nolan R. Wallach, \textit{Real Reductive Groups I},
Academic Press, Boston, 1988.

\bibitem[RRGII]{RRGII}Nolan R. Wallach, \textit{Real Reductive Groups II},
Academic Press, Boston, 1992.

\bibitem[W1]{JacquetInt}Nolan R. Wallach, Lie Algebra Cohomology and
holomorphic continuation of generalized Jacquet integrals, Advanced Studies in
Pure Math.,14(1988), 123-151.

\bibitem[W2]{HArmHom}Nolan R.\ Wallach, \textit{Harmonic Analysis on
Homogeneous Spaces, }Second Edition, Dover Publications, Inc., Mineola, 2018.

\bibitem[W3]{CWParam}Nolan R. Wallach, The dependence on parameters of the
inverse functor of the K-finite functor, Represent. Theory 26(2022), 94-121.

\bibitem[W4]{WPT}Nolan R. Wallach, The Whittaker Plancherel Theorem, Japan J.
Math. 19(2024), 1-65.
\end{thebibliography}
\end{document}